\documentclass[11p]{article}
\usepackage[a4paper, total={6.3in, 9in}]{geometry}
%This is the tex-file related to the paper on discrete semidefinite programming techniques for the QTSP
%% ARXIV version

\bibliographystyle{plain}
\usepackage{bibspacing}
\setlength{\bibitemsep}{.2\baselineskip plus .05\baselineskip minus .05\baselineskip}

\usepackage{setspace}
\usepackage{amsmath}
\setlength{\abovedisplayskip}{0pt}
\setlength{\belowdisplayskip}{0pt}
\setlength{\abovedisplayskip}{0pt}
\setlength{\belowdisplayskip}{0pt}
 \setlength{\abovedisplayshortskip}{0pt}
 \setlength{\belowdisplayshortskip}{0pt}

\usepackage{amssymb}
\usepackage{mathtools}
\usepackage{bbm}
\usepackage{bm}
\usepackage{subcaption}
\usepackage{tikz}
\usepackage{hyperref}
\usepackage{amsthm}
\newtheorem{theorem}{Theorem}
\newtheorem{lemma}{Lemma}
\newtheorem{proposition}{Proposition}
\newtheorem{corollary}{Corollary}
\newtheorem{definition}{Definition}

\theoremstyle{definition}

\newtheorem{example}{Example}
\allowdisplaybreaks
\usepackage{algpseudocode}
\usepackage{algorithm}
\algrenewcommand\algorithmicrequire{\textbf{Input:}}
\algrenewcommand\algorithmicensure{\textbf{Output:}}
\usepackage{url}
\usepackage{kbordermatrix}
\usepackage{comment}
\usepackage[colorinlistoftodos]{todonotes}
\usepackage{cleveref}
\usepackage{booktabs}
\usepackage{multirow}
\usepackage{relsize}
\usepackage{siunitx} 
\usepackage{upgreek}

\makeatletter
\newcommand{\leqnomode}{\tagsleft@true}
\newcommand{\reqnomode}{\tagsleft@false}
\makeatother

%Mathematical Commands: 
\newcommand{\tr}{\textup{tr}}
\newcommand{\Diag}{\textup{Diag}}
\newcommand{\diag}{\textup{diag}}

\newcommand{\Conv}{\textup{conv}}

\newcommand{\rank}{\textup{rank}}

\begin{document}

\title{On Integrality in Semidefinite Programming for Discrete Optimization}
\author{Frank de Meijer \thanks{Delft Institute of Applied Mathematics, Delft University of Technology, The Netherlands, {\tt f.j.j.demeijer@tudelft.nl}}
	\and {Renata Sotirov}  \thanks{CentER, Department of Econometrics and OR, Tilburg University, The Netherlands, {\tt r.sotirov@uvt.nl}}}

\date{}

\maketitle

\begin{abstract}
    It is well-known that by adding integrality constraints to the semidefinite programming (SDP) relaxation of the max-cut problem, the resulting integer semidefinite program is an exact formulation of the problem. In this paper we show similar results for a wide variety of discrete optimization problems for which SDP relaxations have been derived. Based on a comprehensive study on discrete positive semidefinite matrices, we \textcolor{black}{introduce} a generic approach to derive mixed-integer semidefinite programming (MISDP) formulations of binary quadratically constrained quadratic programs and binary quadratic matrix programs.
Applying a problem-specific approach, we derive more compact MISDP formulations of several problems, such as the quadratic assignment problem, the graph partition problem and the integer matrix completion problem.
We also show that several structured problems allow for novel compact MISDP formulations through the notion of association schemes. Complementary to the recent advances on algorithmic aspects related to MISDP, this work opens new perspectives on solution approaches for the here considered problems. 
\end{abstract}

\textbf{\textit{Keywords}}: mixed-integer semidefinite programming, discrete positive semidefinite matrices, binary quadratic programming, quadratic matrix programming, association schemes

\section{Introduction}
Semidefinite programming (SDP) deals with the optimization of a linear function over the cone of positive semidefinite matrices under the presence of affine constraints. Over the last decades, semidefinite programs (SDPs) have proven themselves particularly useful in providing tight relaxations of discrete optimization problems. Following the extension from linear programming to integer linear programming initiated in the~1960s, a recent interest in incorporating integer variables in SDPs has arisen. Indeed, many real-world decision problems are most naturally modeled by including integer variables in optimization problems. When the variables in an SDP are required to be integer, we refer to the problem as an integer semidefinite program (ISDP). When an SDP contains both integer and continuous variables, we refer to the program as a mixed-integer semidefinite program (MISDP). As mixed-integer linear programs (MILPs) form a subclass of MISDPs, mixed-integer semidefinite programming is in general $\mathcal{NP}$-hard. 

The combination of positive semidefiniteness and integrality induces a lot of structure in matrices. Exploiting this fact, it has been shown that several structured discrete optimization problems allow for a formulation as a (M)ISDP.    
To the best of our knowledge, the first ISDP formulation of a discrete optimization problem is derived for the symmetric traveling salesman problem by Cvetkovi\'c et al.~\cite{Cvetkovic}. Eisenbl\"atter~\cite{Eissenblat} derives an ISDP formulation of the minimum $k$-partition problem, which asks for a partition of the vertex set of a graph into at most $k$ sets such that the total weight of the edges within the same set is minimized. Anjos and Wolkowicz~\cite{AnjosWolkowicz} show that the standard SDP relaxation of the max-cut problem becomes exact when adding integrality constraints. As an immediate consequence, also the SDP relaxation of the max-2-sat problem, i.e., the maximum satisfiability problem where each clause has at most two literals, see e.g.,~\cite{GoemansWilliamson},  can be modeled as an ISDP. An ISDP formulation of the chromatic number of a graph is derived by~Meurdesoif~\cite{Meurdesoif2005}. The quadratic traveling salesman problem (QTSP) is formulated as an ISDP in~\cite{deMeijerSotirovCG}. Next to these classical textbook problems, integrality in SDPs has also been at consideration in more applied problems.  Yonekura and Kanno~\cite{YonekuraKanno} formulate an optimization problem in robust truss topology design as a MISDP, see also~\cite{CerveiraEtAl, Kocvara2010,Mars}.  
The problem of computing restricted isometry constants also allows for a MISDP formulation, see~\cite{GallyPfetsch}. A MISDP formulation of the regularized cardinality constrained least squares problem is derived in~\cite{PWE:15}. Gil-Gonz\'alez et al.~\cite{GilGonzalezEtAl} use a MISDP to formulate an optimal location problem in power system analysis. Zheng et al.~\cite{ZhengEtAl} model a robust version of a power system unit commitment problem using a mixture of semidefinite constraints and integer variables. Duarte~\cite{Duarte} exploits MISDPs to find exact optimal designs of experiments in the domain of surface response modeling in statistics. Finally, Wei et al.~\cite{wei2022convex} show that mixed-integer quadratic programs with indicator variables can be solved as a MISDP.

Despite the literature on these particular problems, a generic approach for deriving problem formulations based on mixed-integer semidefinite programming has not been followed. 
Although there do exist several approaches in the literature where SDP relaxations are used in a branching scheme, see e.g.,~\cite{KrislockEtAl, RendlEtAl}, the branching strategies are based on the problem structure rather than on the matrix variables being integer.
Accordingly, exploiting integrality in the MISDP models itself has not been the method of choice so far.
This might be due to the fact that solving SDPs of large sizes is still practically challenging, discouraging to look into the extension of adding hard integrality constraints to the model. 

In this paper we refute these objections to consider MISDPs as a general solution technique, advocating that they have a great potential to be also numerically advantageous. We particularly focus on binary quadratic programs (BQPs), which aim to optimize a quadratic objective function~$g(x)$ over a feasible set~$\mathcal{X}$ defined by quadratic or linear constraints, where $x$ is required to be binary, see Figure~\ref{Fig:OverviewBQP}.  A common approach to solve these programs is by exploiting standard linearization techniques to model them as a MILP. This is often done in a branch-and-bound setting, where the subproblems correspond to the linear programming relaxations of the MILP. This research line is depicted in the top stream of Figure~\ref{Fig:OverviewBQP}. An alternative approach is to lift the vector variable $x$ in a BQP to a matrix variable $X = xx^\top$ so as to model the problem as an SDP with a nonconvex rank constraint. After relaxing this rank constraint, we obtain an SDP relaxation of the problem, see e.g.,~\cite{AnjosHandbook}. This relaxation approach corresponds to the bottom arrow in Figure~\ref{Fig:OverviewBQP}. Apart from the particular problems mentioned earlier, it is disregarded up to this point that this relaxation can also be obtained via relaxing integrality in a MISDP model that is equivalent to the BQP. More precisely, there exists a bijection between the elements in $\mathcal{X}$ and the integer points in the feasible set of the SDP relaxation. Realizing that fact, this provides a systematic way of approaching BQPs via mixed-integer semidefinite programming. Comparing the three equivalent formulations given in Figure~\ref{Fig:OverviewBQP}, the MISDP formulation has the advantages to have both a linear objective function (compared to the BQP formulation) and a convex relaxation that is often stronger than standard linear programming relaxations. After reformulating the mixed-integer nonlinear program as a convex mixed-integer nonlinear program possessing a tight relaxation, all solution techniques from convex mixed-integer nonlinear programming can be applied to tackle the problem. 
With the advancing state of the solution approaches in this field, the perspectives of this solution approach are hopeful. 

There exist various related works on the representation of optimization problems as convex mixed-integer nonlinear programs in the literature. Lubin et al.~\cite{LubinEtAl} focus on characterizations of general mixed-integer convex representability. The special case of mixed-integer second-order cone programming has received some more attention, see the survey by Benson and Sa{\u{g}}lam~\cite{BensonSaglam}. To the best of our knowledge, we are the first to address the case of mixed-integer semidefinite representability on a generic level. 

The focus of this paper is primarily on the modeling aspect of discrete optimization problems as (M)ISDPs, and less on the algorithmic aspects of solving these. With respect to the computational side, several general-purpose solution approaches have been considered recently. Gally et al.~\cite{GallyEtAl} propose a branch-and-bound framework for solving MISDPs, with the characteristic that strict duality is maintained throughout the branching tree. Solver ingredients, such as dual fixing and branching rules, are also considered in~\cite{GallyEtAl}. Kobayashi and Takano~\cite{KobayashiTakano} propose a cutting-plane and a branch-and-cut algorithm for solving generic MISDPs, where it is shown that the branch-and-cut algorithm performs best. This branch-and-cut algorithm is strengthened in~\cite{deMeijerSotirovCG}, where specialized cuts, such as Chv\'atal-Gomory cuts, are incorporated in the approach. Presolving techniques for MISDPs have been studied by Matter and Pfetsch~\cite{MatterPfetsch}. Hojny and Pfetsch~\cite{HojnyPfetsch} consider reduction techniques for solving MISDPs based on permutation symmetries. Another project that supports solving MISDPs while exploiting sparsity is GravitySDP~\cite{GravitySDP}. 
The computational ingredients of the above-mentioned approaches combined with the theoretical framework of modeling problems as (M)ISDPs that we derive in this paper, provide a complementary foundation of mixed-integer semidefinite programming in discrete optimization. 

\begin{figure}[t!]
\centering
    \tikzset{every picture/.style={line width=0.75pt}} %set default line width to 0.75pt        
\scalebox{0.55}{
\begin{tikzpicture}[x=0.75pt,y=0.75pt,yscale=-1,xscale=1]

%Shape: Rectangle [id:dp5335009371320734] 
\draw  [fill={rgb, 255:red, 198; green, 198; blue, 198 }  ,fill opacity=0.57 ] (472,1502.5) -- (621.33,1502.5) -- (621.33,1622.5) -- (472,1622.5) -- cycle ;
%Shape: Rectangle [id:dp365421957811209] 
\draw  [fill={rgb, 255:red, 198; green, 198; blue, 198 }  ,fill opacity=0.57 ] (472,1769.5) -- (621.33,1769.5) -- (621.33,1889.5) -- (472,1889.5) -- cycle ;
%Rounded Rect [id:dp37130036912636655] 
\draw  [fill={rgb, 255:red, 74; green, 144; blue, 226 }  ,fill opacity=0.33 ][dash pattern={on 0.84pt off 2.51pt}] (19.33,1452.64) .. controls (19.33,1429.99) and (37.69,1411.64) .. (60.33,1411.64) -- (183.33,1411.64) .. controls (205.98,1411.64) and (224.33,1429.99) .. (224.33,1452.64) -- (224.33,1900.38) .. controls (224.33,1923.02) and (205.98,1941.38) .. (183.33,1941.38) -- (60.33,1941.38) .. controls (37.69,1941.38) and (19.33,1923.02) .. (19.33,1900.38) -- cycle ;
%Shape: Rectangle [id:dp9588708996233934] 
\draw  [fill={rgb, 255:red, 184; green, 233; blue, 134 }  ,fill opacity=1 ] (259.33,1735.5) -- (405.33,1735.5) -- (405.33,1919.3) -- (259.33,1919.3) -- cycle ;
%Shape: Rectangle [id:dp8519837216249917] 
\draw   (257.33,1474.5) -- (403.33,1474.5) -- (403.33,1658.3) -- (257.33,1658.3) -- cycle ;
%Shape: Rectangle [id:dp6870806315392897] 
\draw   (46.83,1607.09) -- (192.83,1607.09) -- (192.83,1790.9) -- (46.83,1790.9) -- cycle ;
%Straight Lines [id:da3855133079842372] 
\draw [line width=2.25]    (117.33,1793.97) -- (117.33,1964.97) -- (550.33,1964.97) -- (550.33,1901.97) ;
\draw [shift={(550.33,1896.97)}, rotate = 90] [fill={rgb, 255:red, 0; green, 0; blue, 0 }  ][line width=0.08]  [draw opacity=0] (15.72,-7.55) -- (0,0) -- (15.72,7.55) -- cycle    ;

%Rounded Rect [id:dp45611556465549974] 
\draw  [fill={rgb, 255:red, 184; green, 233; blue, 134 }  ,fill opacity=0.41 ][dash pattern={on 0.84pt off 2.51pt}] (224.33,1454.24) .. controls (224.33,1431.26) and (242.96,1412.64) .. (265.93,1412.64) -- (390.73,1412.64) .. controls (413.71,1412.64) and (432.33,1431.26) .. (432.33,1454.24) -- (432.33,1899.18) .. controls (432.33,1922.15) and (413.71,1940.78) .. (390.73,1940.78) -- (265.93,1940.78) .. controls (242.96,1940.78) and (224.33,1922.15) .. (224.33,1899.18) -- cycle ;
%Straight Lines [id:da5264237314640683] 
\draw [line width=2.25]    (412,1827.3) -- (462.33,1827.3) ;
\draw [shift={(467.33,1827.3)}, rotate = 180] [fill={rgb, 255:red, 0; green, 0; blue, 0 }  ][line width=0.08]  [draw opacity=0] (15.72,-7.55) -- (0,0) -- (15.72,7.55) -- cycle    ;
%Straight Lines [id:da4012141567157159] 
\draw [line width=2.25]    (412,1562.3) -- (462.33,1562.3) ;
\draw [shift={(467.33,1562.3)}, rotate = 180] [fill={rgb, 255:red, 0; green, 0; blue, 0 }  ][line width=0.08]  [draw opacity=0] (15.72,-7.55) -- (0,0) -- (15.72,7.55) -- cycle    ;
%Straight Lines [id:da4706785197169967] 
\draw [line width=0.75]    (205,1622.8) -- (246.33,1622.8)(205,1625.8) -- (246.33,1625.8) ;
\draw [shift={(253.33,1624.3)}, rotate = 180] [color={rgb, 255:red, 0; green, 0; blue, 0 }  ][line width=0.75]    (13.12,-5.88) .. controls (8.34,-2.76) and (3.97,-0.8) .. (0,0) .. controls (3.97,0.8) and (8.34,2.76) .. (13.12,5.88)   ;
\draw [shift={(198,1624.3)}, rotate = 0] [color={rgb, 255:red, 0; green, 0; blue, 0 }  ][line width=0.75]    (13.12,-5.88) .. controls (8.34,-2.76) and (3.97,-0.8) .. (0,0) .. controls (3.97,0.8) and (8.34,2.76) .. (13.12,5.88)   ;
%Straight Lines [id:da3102749856752658] 
\draw [line width=0.75]    (205,1769.8) -- (246.33,1769.8)(205,1772.8) -- (246.33,1772.8) ;
\draw [shift={(253.33,1771.3)}, rotate = 180] [color={rgb, 255:red, 0; green, 0; blue, 0 }  ][line width=0.75]    (13.12,-5.88) .. controls (8.34,-2.76) and (3.97,-0.8) .. (0,0) .. controls (3.97,0.8) and (8.34,2.76) .. (13.12,5.88)   ;
\draw [shift={(198,1771.3)}, rotate = 0] [color={rgb, 255:red, 0; green, 0; blue, 0 }  ][line width=0.75]    (13.12,-5.88) .. controls (8.34,-2.76) and (3.97,-0.8) .. (0,0) .. controls (3.97,0.8) and (8.34,2.76) .. (13.12,5.88)   ;
%Straight Lines [id:da8940923989961] 
\draw [line width=0.75]    (332.83,1678.64) -- (332.83,1717.64)(329.83,1678.64) -- (329.83,1717.64) ;
\draw [shift={(331.33,1724.64)}, rotate = 270] [color={rgb, 255:red, 0; green, 0; blue, 0 }  ][line width=0.75]    (13.12,-5.88) .. controls (8.34,-2.76) and (3.97,-0.8) .. (0,0) .. controls (3.97,0.8) and (8.34,2.76) .. (13.12,5.88)   ;
\draw [shift={(331.33,1671.64)}, rotate = 90] [color={rgb, 255:red, 0; green, 0; blue, 0 }  ][line width=0.75]    (13.12,-5.88) .. controls (8.34,-2.76) and (3.97,-0.8) .. (0,0) .. controls (3.97,0.8) and (8.34,2.76) .. (13.12,5.88)   ;

% Text Node
\draw (65,1616.5) node [anchor=north west][inner sep=0.75pt]  [font=\normalsize] [align=left] {\begin{minipage}[lt]{79.25pt}\setlength\topsep{0pt}
\begin{center}
\textbf{Binary Quadratic }\\\textbf{Programming}
\end{center}

\end{minipage}};
% Text Node
\draw (84,1680.5) node [anchor=north west][inner sep=0.75pt]  [font=\small] [align=left] {$\displaystyle  \begin{array}{{>{\displaystyle}l}}
\min \ \ g( x)\\
\text{s.t.} \ \ \ \ x\ \in \mathcal{X}
\end{array}$};
% Text Node
\draw (284,1479.5) node [anchor=north west][inner sep=0.75pt]  [font=\small] [align=left] {\begin{minipage}[lt]{70pt}\setlength\topsep{0pt}
\begin{center}
\textbf{Mixed-Integer}\\\textbf{Linear}\\\textbf{Programming}
\end{center}

\end{minipage}};
% Text Node
\draw (285,1740) node [anchor=north west][inner sep=0.75pt]  [font=\normalsize] [align=left] {\begin{minipage}[lt]{75pt}\setlength\topsep{0pt}
\begin{center}
\textbf{Mixed-Integer }\\\textbf{Semidefinite }\\\textbf{Programming}
\end{center}

\end{minipage}};
% Text Node
\draw (501,1506.5) node [anchor=north west][inner sep=0.75pt]  [font=\normalsize] [align=left] {\begin{minipage}[lt]{62.4pt}\setlength\topsep{0pt}
\begin{center}
\textbf{Linear }\\\textbf{Programming}\\\textbf{Relaxation}
\end{center}

\end{minipage}};
% Text Node
\draw (501,1776.5) node [anchor=north west][inner sep=0.75pt]  [font=\normalsize] [align=left] {\begin{minipage}[lt]{62.4pt}\setlength\topsep{0pt}
\begin{center}
\textbf{Semidefinite }\\\textbf{Programming}\\\textbf{Relaxation}
\end{center}

\end{minipage}};
% Text Node
\draw (72,1420) node [anchor=north west][inner sep=0.75pt]  [font=\normalsize] [align=left] {\begin{minipage}[lt]{75pt}\setlength\topsep{0pt}
\begin{center}
\textbf{\textit{Mixed-Integer}}\\\textbf{\textit{Nonlinear }}\\\textbf{\textit{Programming}}
\end{center} 

\end{minipage}};
% Text Node
\draw (265,1420) node [anchor=north west][inner sep=0.75pt]  [font=\normalsize] [align=left] {\begin{minipage}[lt]{100pt}\setlength\topsep{0pt}
\begin{center}
\textbf{\textit{Convex Mixed-}}\\\textbf{\textit{Integer Nonlinear}}\\\textbf{\textit{Programming}}
\end{center}

\end{minipage}};
% Text Node
\draw (50,1733) node [anchor=north west][inner sep=0.75pt]  [font=\small] [align=left] {$\displaystyle g(x)$ quadratic function\\$\displaystyle \mathcal{X}$ \ \ \ integer quadratically \\ \ \ \ \ \ \ \ constrained set};
% Text Node
\draw (265,1542.5) node [anchor=north west][inner sep=0.75pt]  [font=\small] [align=left] {$\displaystyle  \begin{array}{{>{\displaystyle}l}}
\min \ \ f( x,y)\\
\text{s.t.} \ \ \ \ ( x,\ y) \ \in \mathcal{X}_{MILP}
\end{array}$};
% Text Node
\draw (265,1596.5) node [anchor=north west][inner sep=0.75pt]  [font=\small] [align=left] {$\displaystyle f(x,y)$ \ \ linear function\\$\displaystyle \mathcal{X}_{MILP}$ \ mixed-integer \\ \hspace{1.14cm} polyhedral set};
% Text Node
\draw (258,1803.5) node [anchor=north west][inner sep=0.75pt]  [font=\small] [align=left] {$\displaystyle  \begin{array}{{>{\displaystyle}l}}
\min \ \ h( x,X)\\
\text{s.t.} \ \ \ \ ( x,\ X) \ \in \mathcal{X}_{MISDP}
\end{array}$};
% Text Node
\draw (260,1856.5) node [anchor=north west][inner sep=0.75pt]  [font=\small] [align=left] {$\displaystyle h( x,X)$ \ \ \, linear function\\$\displaystyle \mathcal{X}_{MISDP}$ \ \ mixed-integer \\ \hspace{1.44cm} spectrahedral \\ 
\hspace{1.44cm}  set};
% Text Node
\draw (475,1571.5) node [anchor=north west][inner sep=0.75pt]  [font=\small] [align=left] {$\displaystyle  \begin{array}{{>{\displaystyle}l}}
\min \ \ f( x,y)\\
\text{s.t.} \ \ \ \ ( x,\ y) \ \in \overline{\mathcal{X}}_{MILP}
\end{array}$};
% Text Node
\draw (471,1840.5) node [anchor=north west][inner sep=0.75pt]  [font=\small] [align=left] {$\displaystyle  \begin{array}{{>{\displaystyle}l}}
\min \ \ h( x,X)\\
\text{s.t.} \ \ \ \ ( x,\ X) \ \in \overline{\mathcal{X}}_{MISDP}
\end{array}$};
% Text Node
\draw (247,1969.23) node [anchor=north west][inner sep=0.75pt]  [font=\small] [align=left] {lift $\displaystyle x$ to $\displaystyle X$ and relax rank-constraint};
\end{tikzpicture}}
\caption{Overview of various exact formulations of binary quadratic program and their relaxations. A double arrow ($\Longleftrightarrow$) denotes equivalence between the formulations, while a solid arrow ($\boldsymbol{\rightarrow}$) denotes that the formulation is relaxed from the former to the latter. The sets $\mathcal{X}$, $\mathcal{X}_{MILP}$ and $\mathcal{X}_{MISDP}$ are defined by nonconvex integer constraints, while $\overline{\mathcal{X}}_{MILP}$ and $\overline{\mathcal{X}}_{MISDP}$ are convex relaxations. \label{Fig:OverviewBQP}}
\end{figure}
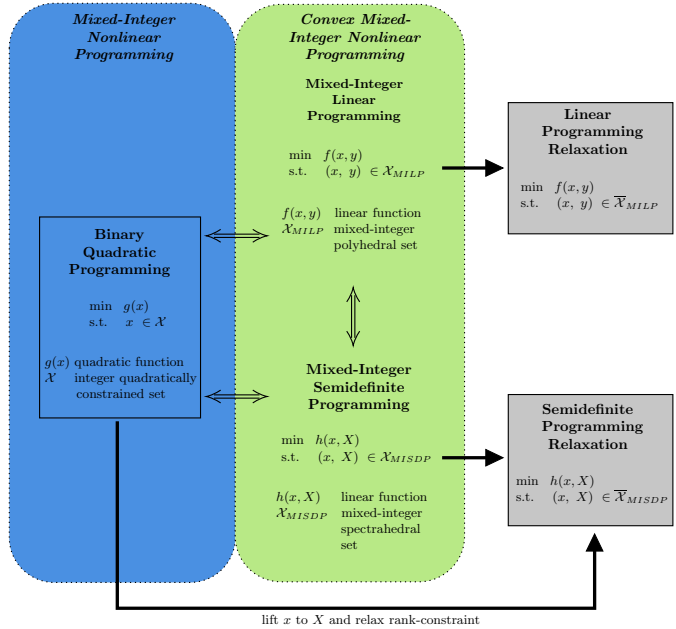

\subsection*{Main results and outline}

This paper studies the theoretical role of mixed-integer semidefinite programming in discrete optimization. We show that many problems can be modeled as a (M)ISDP, either by a generic approach for certain large problem classes, or by a more problem-specific approach. Gradually, we cover and exploit results from matrix theory, optimization, algebraic combinatorics and graph theory. Our approach is accompanied with a large number of examples of various discrete optimization problems. 

We start our approach with an extensive overview of results on the matrix theory of discrete positive semidefinite (PSD) matrices. Without considering explicit optimization problems, we focus on the structure of PSD $\{0,1\}$--, $\{\pm 1\}$-- and $\{0,\pm 1\}$--matrices. This overview reviews results from~\cite{BermanXu, DukanovicRendl, LetchfordSorensen, LiederEtAl}, but also introduces new results and formulations of these matrix sets, such as a combinatorial viewpoint of PSD~$\{0,1\}$--matrices of rank at most~$r$. We also extend results that are known for $\{0,1\}$--matrices to the other two matrix sets.

These matrix theoretical results are exploited when proving that many binary quadratic problems allow for a formulation as a binary semidefinite program (BSDP). We establish this result for binary quadratically constrained quadratic programs and, in particular, for binary quadratic matrix programs. Problems that allow for a formulation as a binary quadratic matrix program, e.g., quadratic clustering or packing problems, can be modeled as a compact BSDP with a PSD matrix variable of relatively low order. 

After that, we treat several specific problem classes for which we obtain MISDP formulations that do not follow from the above-mentioned framework or for which it is possible to obtain a more efficient formulation. Among these are the quadratic assignment problem, including its extensive number of special cases, see e.g.,~\cite{Burkard}, several graph partition problems and graph problems that can be modeled based on association schemes. Also, as most formulations that we discuss include binary variables, we present two problems that have a MISDP formulation where the variables are integer, but nonbinary, namely the integer matrix completion problem~\cite{Netflix} and the sparse integer least squares problem~\cite{PiaZhou}. 

This paper is structured as follows. In Section~\ref{Sect:TheoryPSD} we present results on the matrix theory of discrete PSD matrices. These results are exploited in Section~\ref{Sect:BQPformulations}, where MISDP formulations of generic quadratically constrained quadratic programs and quadratic matrix programs are derived. In Section~\ref{Sect:ProblemSpecific} we treat problem-specific MISDP formulations that do not follow from the previous section. Concluding remarks are given in Section~\ref{Sect:Conclusion}.

\subsection*{Notation}
We denote by $\mathbf{0}_n \in \mathbb{R}^n$  the vector of all zeros, and by $\mathbf{1}_n \in \mathbb{R}^n$ the vector of all  ones. The identity matrix and the matrix of ones of order $n$ are denoted by $\mathbf{I}_n$ and $\mathbf{J}_n$, respectively. We omit the subscripts of these matrices when there is no confusion about the order.
The $i$th {\color{black}unit vector} is denoted by $\mathbf{e}_i$ and we define $\mathbf{E}_{ij} := \mathbf{e}_i\mathbf{e}_j^\top$. 
The set of $n\times n$ permutation matrices is denoted by~$\Pi_n$.

For $n \in \mathbb{Z}_+$, we define the set $[n] := \{1, \ldots, n\}$.
For any $S \subseteq [n]$, we let $\mathbf{\mathbbm{1}_{S}}$ be the binary indicator vector of $S$.   
The support of $x\in \mathbb{R}^n $
is denoted by $\mbox{supp}(x)$.

We denote the set of all $n \times n$ real symmetric matrices by  $\mathcal{S}^n$.
The cone of symmetric positive semidefinite matrices is defined as $\mathcal{S}^n_+ := \{ {X} \in \mathcal{S}^n \, : \, \, {X} \succeq \mathbf{0} \}$, where ${X} \succeq \mathbf{0}$ denotes  that $X$ is PSD.
The trace of a square matrix ${X}=(X_{ij})$ is given by $\tr({X})=\sum_{i}X_{ii}$.
 For any ${X},{Y} \in \mathbb{R}^{n \times m}$ the trace inner product is defined as $\langle {X}, {Y} \rangle := \tr({X}^\top {Y}) = \sum_{i = 1}^n\sum_{j = 1}^m X_{ij} Y_{ij}$.

 The operator $\diag : \mathbb{R}^{n \times n} \rightarrow \mathbb{R}^n$ maps a square matrix to a vector consisting of its diagonal elements. We denote by $\Diag : \mathbb{R}^n \rightarrow \mathbb{R}^{n \times n}$ its adjoint operator.
The Hadamard product of two matrices~${X=(X_{ij}), Y=(Y_{ij}) \in  \mathbb{R}^{n \times m}}$ is denoted by $X\circ Y$
and is defined as $(X\circ Y)_{ij} := X_{ij} Y_{ij}$.
The direct sum of matrices $X$ and $Y$ is defined as $X \oplus Y = \begin{psmallmatrix} X & \mathbf{0} \\  \mathbf{0} & Y \end{psmallmatrix}.$
The Kronecker product $X \otimes Y$ of matrices $X \in \mathbb{R}^{p \times q}$ and $Y\in \mathbb{R}^{r\times s}$ is defined as the $pr \times qs$ matrix composed of $pq$ blocks of size~$r\times s$, with block $ij$ given by~$x_{ij} Y$,~$i \in [p]$,~$j \in [q]$.

\section{Theory on discrete PSD matrices} \label{Sect:TheoryPSD}

Most discrete optimization problems that we consider in this paper are defined using binary variables, i.e., variables taking values in~$\{0,1\}$ or~$\{\pm 1\}$, or ternary variables, i.e., variables whose values are in~$\{0, \pm 1\}$. In this section we derive several useful results on these matrix sets with respect to positive semidefiniteness. We start by considering the PSD~$\{0,1\}$--matrices, after which we extend these results to PSD $\{\pm 1\}$-- and~$\{0,\pm 1\}$--matrices.

\subsection[Theory on PSD $\{0,1\}$--matrices]{Theory on PSD $\mathbf{\{0,1\}}$--matrices} \label{Subsect:theory01}
In this section we consider the set of positive semidefinite $\{0,1\}$--matrices. We derive and recall several formulations of this matrix set, including a combinatorial, polyhedral and a set-completely positive description. {\color{black}We start this section with two known results, i.e., Theorem~\ref{Thm:binPSD} and Proposition~\ref{Prop:binaryProperties}, after which we present a series of new results.}

Positive semidefinite $\{0,1\}$--matrices are studied explicitly by Letchford and S{\o}rensen~\cite{LetchfordSorensen}. They derive the following decomposition result on PSD $\{0,1\}$--matrices. 
\begin{theorem}[\cite{LetchfordSorensen}]
Let ${X} \in \{0,1\}^{n \times n}$ be a symmetric matrix. Then ${X} \succeq \mathbf{0}$ if and only if ${X} = \sum_{j = 1}^r x_j x_j^\top$ for some ${x_j} \in \{0,1\}^n$, $j \in [r]$. \label{Thm:binPSD}
\end{theorem}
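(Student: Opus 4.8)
The plan is to dispatch the easy implication immediately and then concentrate on the structural "only if" direction, whose whole content is the observation that a PSD $\{0,1\}$--matrix must, after a simultaneous permutation of rows and columns, be a direct sum of all-ones blocks together with a zero block. The "if" direction needs nothing: if $X = \sum_{j=1}^r x_j x_j^\top$, then $X$ is a sum of positive semidefinite matrices, hence $X \succeq \mathbf{0}$, regardless of the entries of the $x_j$.

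For the "only if" direction, suppose $X \in \{0,1\}^{n \times n}$ is symmetric with $X \succeq \mathbf{0}$. First I would remove the zero diagonal entries: if $X_{ii} = 0$, nonnegativity of the $2 \times 2$ principal minor on $\{i,j\}$ gives $-X_{ij}^2 \ge 0$, so row and column $i$ of $X$ vanish; such indices contribute nothing, and it suffices to decompose the principal submatrix on $D := \{ i \in [n] : X_{ii} = 1 \}$ and pad the resulting vectors with zeros. On $D$ I would then show that the relation $i \sim j \iff X_{ij} = 1$ is an equivalence relation. Reflexivity and symmetry are clear; transitivity is the one real step. I see two short routes. Using a Gram decomposition $X = V^\top V$ with columns $v_1,\dots,v_n$, one has $\|v_i\| = 1$ for $i \in D$, and $X_{ij} = 1$ forces equality in Cauchy--Schwarz, hence $v_i = v_j$; transitivity of equality of vectors is trivial, and conversely $v_i = v_j$ gives $X_{ij} = 1$. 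Alternatively, staying entirely within the $\{0,1\}$ matrix, the $3\times 3$ principal submatrix on $\{i,j,k\}$ with $X_{ij} = X_{jk} = 1$ has determinant $-(1 - X_{ik})^2$, which must be nonnegative, forcing $X_{ik} = 1$. I would likely present the determinant version, since it avoids invoking a factorization of $X$.

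Given that $\sim$ is an equivalence relation, let $S_1, \dots, S_r \subseteq [n]$ be its classes; for $i,j \in D$ in distinct classes we have $X_{ij} \ne 1$, hence $X_{ij} = 0$. I would then check that $X = \sum_{k=1}^r \mathbbm{1}_{S_k}\mathbbm{1}_{S_k}^\top$: since the $S_k$ partition $D$, the $(i,j)$ entry of the right-hand side is $1$ precisely when $i$ and $j$ lie in a common class, which for $i,j \in D$ agrees with $X_{ij}$, while every entry having an index outside $D$ is $0$ on both sides. Setting $x_k := \mathbbm{1}_{S_k} \in \{0,1\}^n$ finishes the argument. The only genuinely nontrivial point is the transitivity step above; the rest is bookkeeping about zero rows and re-indexing, and the one thing to state carefully is that the classes partition $D$, so that no double counting occurs and each diagonal entry $X_{ii} = 1$, $i \in D$, is produced by exactly one rank-one term.
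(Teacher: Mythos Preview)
The paper does not give its own proof of this theorem; it is quoted from Letchford and S{\o}rensen and used as a starting point for the subsequent results. Your argument is correct and complete: the $2\times 2$ principal minor handles the zero-diagonal indices, and the $3\times 3$ determinant $-(1-X_{ik})^2$ forces transitivity, after which the equivalence classes give the required rank-one $\{0,1\}$ summands. The block-diagonal picture you arrive at is precisely what the paper records (also without proof) in Proposition~\ref{Prop:binaryProperties} and Corollary~\ref{Cor:binaryProperties}, so your proof in fact establishes those statements along the way.
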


Given $S \subseteq \mathbb{R}_+$, a matrix $X$ is called $S$-completely positive if $X = PP^\top$ for some~$P \in S^{n \times k}$. In case~$S = \mathbb{R}_+$, we call $X$ completely positive. It follows from Theorem~\ref{Thm:binPSD} that any PSD~$\{0,1\}$--matrix is~$\{0,1\}$--completely positive, see also Berman and Xu~\cite{BermanXu}.

The decomposition of PSD $\{0,1\}$--matrices gives rise to a useful combinatorial interpretation on the complete graph $K_n$. Viewing each vector $x_j \in \{0,1\}^n$ as an indicator vector on the vertices of $K_n$, the matrix~$x_jx_j^\top$ can be seen as the characteristic matrix of a clique in~$K_n$. Given a decomposition~$X = \sum_{j=1}^k x_jx_j^\top$, the cliques indexed by $j\in [k]$ are pairwise disjoint, since the diagonal of $X$ is at most one. Therefore, each PSD $\{0,1\}$--matrix is the characteristic matrix of a set of pairwise disjoint cliques in~$K_n$. This combinatorial structure is in the literature also known as a clique packing. 

\medskip

As we will see in the next section, many SDP formulations arise from a lifting~$PP^\top$, where $P$ is an appropriate $n \times r$ $\{0,1\}$--matrix. Consequently, the resulting PSD~$\{0,1\}$--matrix has rank at most $r$. From that perspective, it makes sense to consider the set of PSD~$\{0,1\}$--matrices that have an upper bound on the rank. For positive integers~$r, n$ with~$r \leq n$, let us define the discrete set 
\begin{align}
    \mathcal{D}^n_r &:= \left\{ X \in \{0,1\}^{n \times n} \, : \, \, X \succeq \mathbf{0},~\rank(X) \leq r \right\}. \label{Def:setD}
\intertext{Theorem~\ref{Thm:binPSD} induces the following $\{0,1\}$--completely positive description of $\mathcal{D}^n_r$:}
    \mathcal{D}^n_r & \,= \left\{PP^\top \, : \, \, P \in \{0,1\}^{n \times r},~ P\mathbf{1}_r \leq \mathbf{1}_n \right\}. \label{Eq:setDbinaryCP}
\end{align}
Next, we will derive another formulation of $\mathcal{D}^n_r$, where the constraint $\rank(X) \leq r$ is established by an appropriate linear matrix inequality. 
To that end, we exploit the following result that is implicitly proved in many sources and explicitly by Dukanovic and Rendl~\cite{DukanovicRendl}.

\begin{proposition}[\cite{DukanovicRendl}]
\label{Prop:binaryProperties}
Let ${X} \in \{0,1\}^{n \times n}$ be symmetric. Then, the following are equivalent: 
\begin{itemize}
\item[(i)] $\diag({X})=\mathbf{1}_n$, $\rank({X})=r$, ${X} \succeq \mathbf{0}$.
    \item[(ii)] There exists a permutation matrix ${Q}$ such that ${QXQ}^\top=\mathbf{J}_{n_1} \oplus \, \cdots \, \oplus \mathbf{J}_{n_r}$ with $n = n_1 + \dots + n_r$.
    \item[(iii)] $\diag({X})=\mathbf{1}_n$, $\rank({X})=r$ and ${X}$ satisfies the triangle inequalities $X_{ij} + X_{ik} - X_{jk} \leq 1$ for all $(i,j,k) \in [n] \times [n] \times [n]$. 
\item[(iv)] $\diag(X) = \mathbf{1}_n$ and $(tX - \mathbf{J} \succeq \mathbf{0} \Longleftrightarrow t \geq r)$. 
\end{itemize}
\end{proposition}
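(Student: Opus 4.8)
The plan is to establish that the four conditions are equivalent through the cycle $(i) \Rightarrow (iii) \Rightarrow (ii) \Rightarrow (iv) \Rightarrow (i)$, so that each single implication stays elementary. For $(i) \Rightarrow (iii)$ I would use only that principal submatrices of a PSD matrix are PSD: for pairwise distinct $i,j,k$, the $3 \times 3$ principal submatrix of $X$ on $\{i,j,k\}$ has unit diagonal and entries in $\{0,1\}$, and nonnegativity of its determinant forces the third off-diagonal entry to equal $1$ as soon as two of them equal $1$; in all other cases $X_{ij} + X_{ik} \le 1$ already. The triangle inequalities with a repeated index are immediate from $\diag(X) = \mathbf{1}_n$ and $X \in \{0,1\}^{n \times n}$.

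For $(iii) \Rightarrow (ii)$ I would interpret the triangle inequalities combinatorially: define $i \sim j$ whenever $X_{ij} = 1$. Reflexivity and symmetry are clear, and transitivity is exactly the triangle inequality with apex $j$, since $X_{ij} = X_{jk} = 1$ gives $X_{ik} \ge X_{ij} + X_{jk} - 1 = 1$. Conjugating $X$ by a permutation matrix that lists the $\sim$-classes consecutively turns $X$ into the block-diagonal matrix $\mathbf{J}_{n_1} \oplus \cdots \oplus \mathbf{J}_{n_k}$, where $k$ is the number of classes and $n_1,\dots,n_k \ge 1$; since each $\mathbf{J}_{n_\ell}$ has rank one, $k = \rank(X) = r$, which is $(ii)$.

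For $(ii) \Rightarrow (iv)$, conjugation by a permutation matrix leaves $\mathbf{J}$ fixed, so we may assume $X = D := \mathbf{J}_{n_1} \oplus \cdots \oplus \mathbf{J}_{n_r}$ with each $n_\ell \ge 1$, and then $\diag(X) = \mathbf{1}_n$ is immediate. Splitting $v \in \mathbb{R}^n$ into blocks $v = (v^{(1)},\dots,v^{(r)})$ and writing $s_\ell := \mathbf{1}_{n_\ell}^\top v^{(\ell)}$, one has $v^\top D v = \sum_\ell s_\ell^2$ and $v^\top \mathbf{J}_n v = \big(\sum_\ell s_\ell\big)^2$, so the Cauchy--Schwarz inequality $\big(\sum_\ell s_\ell\big)^2 \le r \sum_\ell s_\ell^2$ yields $rD - \mathbf{J}_n \succeq \mathbf{0}$, hence $tD - \mathbf{J}_n = (t-r)D + (rD - \mathbf{J}_n) \succeq \mathbf{0}$ for all $t \ge r$; conversely, the vector with $v^{(\ell)} = \tfrac{1}{n_\ell}\mathbf{1}_{n_\ell}$ makes $v^\top(tD - \mathbf{J}_n)v = r(t-r) < 0$ for $t < r$, so $tX - \mathbf{J} \succeq \mathbf{0} \Longleftrightarrow t \ge r$. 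Finally, for $(iv) \Rightarrow (i)$: dividing $tX - \mathbf{J} \succeq \mathbf{0}$ by $t$ and letting $t \to \infty$ gives $X \succeq \mathbf{0}$ by closedness of $\mathcal{S}^n_+$; then $X$ is a PSD symmetric $\{0,1\}$-matrix with unit diagonal, so by Theorem~\ref{Thm:binPSD} (or the $3 \times 3$-minor argument above) it is permutation-equivalent to $\mathbf{J}_{m_1} \oplus \cdots \oplus \mathbf{J}_{m_k}$ with $k = \rank(X)$ and all $m_\ell \ge 1$; the computation from $(ii) \Rightarrow (iv)$ then gives $tX - \mathbf{J} \succeq \mathbf{0} \Longleftrightarrow t \ge k$, and comparison with the hypothesis forces $k = r$.

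I expect the main obstacle to be the nontrivial direction of $(ii) \Rightarrow (iv)$, namely proving $rX - \mathbf{J} \succeq \mathbf{0}$; the blockwise Cauchy--Schwarz above is the key trick, and an equivalent route is to factor $tD - \mathbf{J}_n = B(t\mathbf{I}_r - \mathbf{J}_r)B^\top$ through the full-column-rank class-membership matrix $B$ (so that $D = BB^\top$ and $\mathbf{J}_n = B\mathbf{J}_rB^\top$) and read off the eigenvalues $t-r$ and $t$ of $t\mathbf{I}_r - \mathbf{J}_r$. A minor subtlety is that $(iv) \Rightarrow (i)$ must first recover the entire block structure of $X$ before its rank can be pinned down, so it quietly reuses the machinery of the earlier implications rather than reading the rank off $(iv)$ directly.
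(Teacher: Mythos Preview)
Your proof is correct. Note, however, that the paper does not supply its own proof of this proposition: it is stated as a cited result from Dukanovic and Rendl, and the paper only proves the generalisation in Corollary~\ref{Cor:binaryProperties}, whose argument in turn invokes Proposition~\ref{Prop:binaryProperties} as a black box. So there is no ``paper's proof'' to compare against here. Your cycle $(i)\Rightarrow(iii)\Rightarrow(ii)\Rightarrow(iv)\Rightarrow(i)$ is self-contained and elementary; the blockwise Cauchy--Schwarz argument for $(ii)\Rightarrow(iv)$ and the re-use of the already-established implications in $(iv)\Rightarrow(i)$ (to pin down $\rank(X)=r$) are both legitimate, and you correctly flag the latter as the one mildly non-obvious point.
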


Proposition~\ref{Prop:binaryProperties} establishes the equivalence between useful characterizations of rank-$r$ PSD~$\{0,1\}$--matrices that have ones on the diagonal. In the following corollary we generalize this result by relaxing the condition $\diag(X) = \mathbf{1}_n$. {\color{black}The proof is similar to the proof of~Proposition~\ref{Prop:binaryProperties}.}
\begin{corollary} \label{Cor:binaryProperties}
    Let $X \in \{0,1\}^{n \times n}$ be symmetric. Then, the following statements are equivalent:
    \begin{itemize}
    \item[(i)] $\rank({X})=r$, ${X} \succeq \mathbf{0}$.
    
    \item[(ii)] There exists a permutation matrix ${Q}$ such that ${QXQ}^\top=\mathbf{J}_{n_1} \oplus \, \cdots \oplus \mathbf{J}_{n_r} \oplus \mathbf{0}_{n_z \times n_z}$ with $n = n_1 + \dots + n_r + n_z$.
    
    \item[(iii)] $\rank({X})=r$ and ${X}$ satisfies the triangle inequalities $X_{ij} \leq X_{ii}$ for all $i \neq j$ and $X_{ij} + X_{ik} - X_{jk} \leq X_{ii} $ for all $j < k, i \neq j, k$. 
      
\item[(iv)] $tX - \diag(X)\diag(X)^\top \succeq \mathbf{0}$ if and only if $t \geq r$. 
\end{itemize}
\end{corollary}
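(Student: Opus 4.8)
The plan is to derive Corollary~\ref{Cor:binaryProperties} directly from Proposition~\ref{Prop:binaryProperties} by an ``augmentation'' trick that restores ones on the diagonal. First I would reduce to the case that $X$ has no all-zero rows. If $X \in \{0,1\}^{n\times n}$ is symmetric and PSD, then whenever $X_{ii} = 0$, positive semidefiniteness forces the entire $i$th row and column to vanish (a principal $2\times 2$ minor $\begin{psmallmatrix} 0 & X_{ij} \\ X_{ij} & X_{jj}\end{psmallmatrix}$ must be PSD, so $X_{ij}=0$). Hence after a permutation, $X = X' \oplus \mathbf{0}_{n_z \times n_z}$ where $X' \in \{0,1\}^{m\times m}$ is symmetric, PSD, with $\diag(X') = \mathbf{1}_m$ and $\rank(X') = \rank(X) = r$. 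This observation is the crux of every implication, so I would state and prove it as the opening step. Note also that by Theorem~\ref{Thm:binPSD} a PSD $\{0,1\}$--matrix with a zero diagonal entry in position $i$ has $x_j$'s all vanishing in coordinate $i$, giving the same conclusion.

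Next I would prove (i)$\Leftrightarrow$(ii). Given (i), apply the zero-row reduction to get $X = X' \oplus \mathbf{0}_{n_z\times n_z}$ with $X'$ satisfying Proposition~\ref{Prop:binaryProperties}(i); applying Proposition~\ref{Prop:binaryProperties}(i)$\Rightarrow$(ii) to $X'$ and composing permutations yields (ii). The converse (ii)$\Rightarrow$(i) is immediate: a block diagonal matrix $\mathbf{J}_{n_1}\oplus\cdots\oplus\mathbf{J}_{n_r}\oplus\mathbf{0}$ is a $\{0,1\}$--matrix, is PSD (each $\mathbf{J}_{n_i}\succeq\mathbf 0$ and $\mathbf 0\succeq \mathbf 0$), has rank exactly $r$ since each $\mathbf{J}_{n_i}$ has rank $1$, and rank and PSD-ness are invariant under congruence by a permutation matrix. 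For (i)$\Leftrightarrow$(iii): given (i), the zero-row reduction plus Proposition~\ref{Prop:binaryProperties}(iii) for $X'$ gives the triangle inequalities on the nonzero part, and one checks they hold trivially when some index lies in the zero block (if $X_{ii}=0$ then $X_{ij}=X_{ik}=0$, and $-X_{jk}\le 0 = X_{ii}$, while $X_{ij}\le X_{ii}$ reads $0\le 0$); conversely, $X_{ij}\le X_{ii}$ and symmetry give $X_{ij}=0$ whenever $X_{ii}=0$, so after permutation $X = X'\oplus\mathbf 0$ with $X'$ satisfying the hypotheses of Proposition~\ref{Prop:binaryProperties}(iii), hence PSD of rank $r$, hence (i). Here I should double-check that the triangle inequalities listed for the submatrix $X'$ (where $X'_{ii}=1$) are exactly the specialization of the stated inequalities, so no strengthening or weakening occurs.

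Finally, for (i)$\Leftrightarrow$(iv), I would again pass through the decomposition $X = Q^\top(X'\oplus\mathbf 0_{n_z\times n_z})Q$. Writing $d := \diag(X)$, one has $d = Q^\top(\mathbf 1_m \oplus \mathbf 0_{n_z})$, so $dd^\top = Q^\top\big(\mathbf J_m \oplus \mathbf 0_{n_z\times n_z}\big)Q$, and therefore $tX - dd^\top = Q^\top\big((tX' - \mathbf J_m)\oplus \mathbf 0_{n_z\times n_z}\big)Q$. The direct summand $\mathbf 0_{n_z\times n_z}$ is always PSD, so $tX - dd^\top \succeq \mathbf 0$ iff $tX' - \mathbf J_m\succeq\mathbf 0$, which by Proposition~\ref{Prop:binaryProperties}(iv) applied to $X'$ (whose rank is $r$) holds iff $t\ge r$. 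This proves (ii)$\Rightarrow$(iv), and since (ii) already gives $\rank(X)=r$ and $X\succeq\mathbf 0$ (take $t=r$ and use that $tX\succeq dd^\top\succeq\mathbf 0$ combined with... actually more directly from (ii) itself), one concludes (iv)$\Leftrightarrow$(i) by going around the cycle (i)$\Leftrightarrow$(ii)$\Rightarrow$(iv) and (iv)$\Rightarrow$(i). For the last arrow I would argue: (iv) with $t=r$ gives $rX \succeq dd^\top \succeq \mathbf 0$, hence $X\succeq\mathbf 0$; and (iv) forces $\rank(X)=r$ because the ``only if'' direction of the biconditional, i.e. $tX-dd^\top\succeq\mathbf 0 \Leftrightarrow t\ge r$, pins down $r$ uniquely (if $\rank(X)$ were some $r'\ne r$, the already-proved (i)$\Rightarrow$(iv) direction applied with $r'$ would contradict the stated biconditional). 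The main obstacle is bookkeeping rather than depth: making sure the zero-block reduction is airtight and that the specialized triangle inequalities match exactly, so that nothing is silently assumed about $X$ having a nonzero diagonal.
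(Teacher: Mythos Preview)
Your proof is correct and follows essentially the same reduction strategy as the paper: split off the zero-diagonal block via a permutation, apply Proposition~\ref{Prop:binaryProperties} to the remaining part with unit diagonal, and check that everything transfers back. The only notable differences are that the paper handles (i)$\Leftrightarrow$(iii) by citing \cite[Proposition~3]{LetchfordSorensen} directly rather than reducing to Proposition~\ref{Prop:binaryProperties}(iii), and that the paper treats the case $r=0$ in (iv)$\Rightarrow$(i) explicitly---your step ``$rX \succeq dd^\top$ hence $X\succeq\mathbf 0$'' requires $r\ge 1$, so for $r=0$ you should instead use $t=0$ to get $d=\mathbf 0$ and then any $t>0$ to conclude $X\succeq\mathbf 0$.
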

\begin{proof}  
    Throughout the proof, let ${N_1 := \{i \in [n] \, : \, \, X_{ii} = 1\}}$ and let~$N_0 := [n] \setminus N_1$. Moreover, let $Q'$ denote a permutation matrix corresponding to a permutation of $[n]$ that maps the ordered set~$(1, 2, \ldots, n)$ to an ordered set where the elements in $N_1$ occupy the first $N_1$ positions.

    $(i) \Longleftrightarrow (ii):$  Let $X$ be positive semidefinite with $\rank(X) = r$. Then, the rows and columns indexed by $N_0$ only contain zeros. As a consequence, $Q'X(Q')^\top$ is of the form~${Y \oplus \mathbf{0}_{|N_0| \times |N_0|}}$ with~$\diag(Y) = \mathbf{1}_{|N_1|}$ and $Y \succeq \mathbf{0}$. By Proposition~\ref{Prop:binaryProperties} there exists a permutation matrix $\bar{Q}$ such that $\bar{Q}Y\bar{Q}^\top = \mathbf{J}_{n_1} \oplus \, \cdots \, \oplus \bold{J}_{n_r}$ with ${|N_1| = n_1 + \dots + n_r}$. Let~${Q := (\bar{Q} \oplus \bold{I}_{|N_0|} )Q'}$, then~$QXQ^\top = \bold{J}_{n_1} \oplus \, \cdots \oplus \bold{J}_{n_r} \oplus \mathbf{0}_{|N_0| \times |N_0|}$. 

    Conversely, suppose that ${QXQ}^\top=\bold{J}_{n_1} \oplus \, \cdots \oplus \mathbf{J}_{n_r} \oplus \mathbf{0}_{n_z \times n_z}$ with ${n = n_1 + \dots + n_r + n_z}$ for some permutation matrix ${Q}$. Then, obviously, $QXQ^\top$ is positive semidefinite with $\rank(QXQ^\top) = r$, from which it follows that $X \succeq \mathbf{0}$ with $\rank(X) = r$.

    $(i) \Longleftrightarrow (iii):$ For $n = 2$, the inequalities $X_{ij} \leq X_{ii}$, $i \neq j$, are trivially necessary and sufficient for $X \succeq \bold{0}$. For $n \geq 3$, the result follows from Letchford and S{\o}rensen \cite[Proposition~3]{LetchfordSorensen}.      
     
     ${(i) \Longleftrightarrow (iv):}$  Suppose $X$ is PSD with $\rank(X) = r$. Then,~$Q'X(Q')^\top = {Y \oplus \mathbf{0}_{|N_0| \times |N_0|}}$ with~$\diag(Y) = \mathbf{1}_{|N_1|}$ and $Y \succeq \mathbf{0}$. This leads to the following sequence of equivalences: 
     \begin{align*}
         tX - \diag(X)\diag(X)^\top \succeq \bold{0} \quad & \Longleftrightarrow \quad t \, Q'X(Q')^\top - Q'\diag(X) \diag(X)^\top (Q')^\top \succeq \bold{0} \\
         & \Longleftrightarrow \quad t\left( Y \oplus \bold{0}_{|N_0| \times |N_0|}\right) - \begin{pmatrix}
             \bold{1}_{|N_1|} \\
             \bold{0}_{|N_0|}
         \end{pmatrix} \begin{pmatrix}
             \bold{1}_{|N_1|} \\
             \bold{0}_{|N_0|}
         \end{pmatrix}^\top  \succeq \bold{0} 
     \end{align*}
     which holds if and only if $t \geq r$, by statement $(iv)$ of Proposition~\ref{Prop:binaryProperties}.   

     Conversely, suppose that $tX - \diag(X) \diag(X)^\top \succeq \bold{0}$ if and only if $t \geq r$. If $r = 0$, then~${t = 0}$ induces $\diag(X) = \bold{0}_n$, while $t = 1$ implies~$X - \diag(X)\diag(X)^\top = X \succeq \bold{0}$, {\color{black} since $\diag(X) = \bold{0}_n$}. Hence,~$X$ must be the zero matrix, which is positive semidefinite with rank zero. Now, assume that~$r \geq 1$. Then,~$rX - \diag(X) \diag(X)^\top \succeq \bold{0}$ can be written as ${X \succeq \frac{1}{r} \diag(X) \diag(X)^\top \succeq \bold{0}}$. Let $r^* := \rank(X)$. It follows from the previously proven implication, $(i) \Longrightarrow (iv)$, that~${r^* = \min\{t \, : \, \, tX - \diag(X)\diag(X)^\top \succeq \bold{0} \}}$. By assumption, this value equals~$r$, so~$r^* = r$. We conclude that $X$ is PSD with $\rank(X) = r$.  
\end{proof}
Corollary~\ref{Cor:binaryProperties} can be exploited to prove the following result.
\begin{corollary} \label{Cor:binPSDrank_upper}
    Let $X \in \{0,1\}^{n \times n}$ be symmetric. If $Y = \begin{psmallmatrix}
        r & \diag(X)^\top \\
        \diag(X) & X
    \end{psmallmatrix} \succeq \bold{0}$, then $X \succeq \bold{0}$ with $\rank(X) \leq r$. 
\end{corollary}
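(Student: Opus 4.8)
The plan is to use Corollary~\ref{Cor:binaryProperties}, and specifically the equivalence $(i) \Longleftrightarrow (iv)$ applied to $X$, together with a Schur complement argument applied to the bordered matrix $Y$. First I would recall the Schur complement characterization: since the top-left block of $Y$ is the scalar $r$, we have that $Y \succeq \bold{0}$ implies $r \geq 0$, and moreover (when $r > 0$) $Y \succeq \bold{0}$ is equivalent to $X - \tfrac{1}{r}\diag(X)\diag(X)^\top \succeq \bold{0}$, i.e. to $rX - \diag(X)\diag(X)^\top \succeq \bold{0}$. Multiplying a PSD matrix by a positive scalar preserves positive semidefiniteness, so for every $t \geq r > 0$ we also get $tX - \diag(X)\diag(X)^\top = (t-r)X + \bigl(rX - \diag(X)\diag(X)^\top\bigr) \succeq \bold{0}$, provided we also know $X \succeq \bold{0}$; but $X \succeq \bold{0}$ follows immediately since it is a principal submatrix of $Y$.

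Next I would handle the two edge cases separately. If $r = 0$, then $Y \succeq \bold{0}$ with a zero in the $(1,1)$ entry forces the first row and column of $Y$ to vanish, hence $\diag(X) = \bold{0}_n$, and then $X \succeq \bold{0}$ combined with $\diag(X) = \bold{0}_n$ forces $X = \bold{0}$ (a $\{0,1\}$-matrix that is PSD with zero diagonal is the zero matrix), which indeed has $\rank(X) = 0 \leq r$. If $r \geq 1$, let $r^\ast := \rank(X)$; since $X \succeq \bold{0}$, Corollary~\ref{Cor:binaryProperties}$(i)\Rightarrow(iv)$ tells us that $r^\ast = \min\{t : tX - \diag(X)\diag(X)^\top \succeq \bold{0}\}$. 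From the Schur complement computation above, $rX - \diag(X)\diag(X)^\top \succeq \bold{0}$, so $r$ lies in that set, whence $r^\ast \leq r$. This is exactly the claim $\rank(X) \leq r$.

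I expect the only mild subtlety to be the bookkeeping of the Schur complement when $r$ could a priori be zero (so that division by $r$ is illegitimate), which is why the $r = 0$ case is treated on its own; everything else is routine. One should also be slightly careful that the statement does not assume $r$ is an integer (though in applications it is), but the argument above never uses integrality of $r$, only $r \geq 0$ as forced by $Y \succeq \bold{0}$. No genuine obstacle is anticipated; the content is entirely carried by Corollary~\ref{Cor:binaryProperties} and the standard Schur complement lemma.
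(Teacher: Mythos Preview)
Your proposal is correct and follows essentially the same approach as the paper: both handle the $r=0$ case separately by observing $\diag(X)=\bold{0}_n$ forces $X=\bold{0}$, and for $r\geq 1$ both apply the Schur complement to obtain $rX-\diag(X)\diag(X)^\top\succeq\bold{0}$ and then invoke Corollary~\ref{Cor:binaryProperties}$(i)\Leftrightarrow(iv)$ to bound $\rank(X)$. The only cosmetic difference is that the paper applies the implication $(iv)\Rightarrow(i)$ after first verifying the ``if and only if'' condition holds at the minimum, whereas you apply $(i)\Rightarrow(iv)$ to $X$ (which you already know is PSD) to identify $\rank(X)$ as that minimum; these are logically equivalent phrasings of the same step.
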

\begin{proof}
    The assertion $X \succeq \bold{0}$ is trivial, so it suffices to show that $Y \succeq \bold{0}$ implies $\rank(X) \leq r$. If~$r = 0$, then $\diag(X) = \bold{0}_n$. Since $X \succeq \bold{0}$, $X$ must be the zero matrix and, thus,~${\rank(X) = 0}$. 

    Now, let $r \geq 1$. 
    The Schur complement lemma implies that $rX - \diag(X)\diag(X)^\top \succeq \bold{0}$.  Let~$r^* := \min\left\{ t \, : \, \, tX - \diag(X)\diag(X)^\top \succeq \bold{0} \right\} \leq r$. 
    Since ${r^*X - \diag(X)\diag(X)^\top \succeq \bold{0}}$ and~${X \succeq \bold{0}}$, it follows that $tX - \diag(X)\diag(X)^\top \succeq \bold{0}$ for all $t \geq r^*$. Therefore, $tX - \diag(X)\diag(X)^\top \succeq \bold{0}$ if and only if $t \geq r^*$. Corollary~\ref{Cor:binaryProperties} then implies~${\rank(X) = r^* \leq r}$. 
\end{proof}
Corollary~\ref{Cor:binPSDrank_upper} implies the following characterization of $\mathcal{D}^n_r$, where the rank constraint is merged into a lifted linear matrix inequality: 
\begin{align} \label{Eq:SetDdiscreteLMI}
    \mathcal{D}^n_r = \left\{ X \in \mathcal\{0,1\}^{n \times n} \, : \, \, \begin{pmatrix}
        r & \diag(X)^\top \\ \diag(X) & X
    \end{pmatrix} \succeq \bold{0} \right\}. 
\end{align}
For some optimization problems, the upper bound constraint on the rank of $X$ is not sufficient, as we require that $X$ is {exactly} of rank $r$. The max $k$-cut problem, for instance, requires to partition the vertex set of a graph into exactly $k$ nonempty and pairwise disjoint subsets. The following two results show that the description given in~\eqref{Eq:SetDdiscreteLMI} can be extended to also include a lower bound on the rank of $X$. 
\begin{theorem} \label{Thm:binPSDrank_lower}
    Let $X \in \{0,1\}^{n \times n}$ be symmetric. If there exists a matrix $P \in \{0,1\}^{n \times r}$ with~${P^\top \bold{1} \geq \bold{1}}$ such that $Y = \begin{psmallmatrix}
        \bold{I}_r & P^\top \\ P & X
    \end{psmallmatrix} \succeq \bold{0}$, then $X \succeq \bold{0}$ with $\rank(X) \geq r$. 
\end{theorem}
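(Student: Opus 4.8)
The plan is to use the Schur complement in the same spirit as the proof of Corollary~\ref{Cor:binPSDrank_upper}, but now to extract a \emph{lower} bound on the rank. First I would apply the Schur complement lemma to $Y$: since the leading block $\mathbf{I}_r$ is invertible, $Y \succeq \mathbf{0}$ is equivalent to $X - P\mathbf{I}_r^{-1}P^\top = X - PP^\top \succeq \mathbf{0}$. In particular $X \succeq PP^\top \succeq \mathbf{0}$, which already settles the claim $X \succeq \mathbf{0}$; the remaining work is the bound $\rank(X) \geq r$.

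The second, and key, step exploits integrality of $X$. Comparing diagonal entries in $X - PP^\top \succeq \mathbf{0}$ gives $X_{ii} \geq (PP^\top)_{ii} = \sum_{k=1}^r P_{ik}^2 = \sum_{k=1}^r P_{ik}$ for every $i$, and since $X_{ii} \in \{0,1\}$ while $P \in \{0,1\}^{n\times r}$, each row of $P$ has at most one nonzero entry. Hence the columns of $P$ have pairwise disjoint supports, and the hypothesis $P^\top\mathbf{1} \geq \mathbf{1}$ guarantees that each such support is nonempty. Nonzero vectors with pairwise disjoint supports are linearly independent, so $\rank(P) = r$.

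The third step is the standard monotonicity of rank under the Loewner order: from $\mathbf{0} \preceq PP^\top \preceq X$ it follows that $\ker(X) \subseteq \ker(PP^\top)$, because any $v$ with $Xv = \mathbf{0}$ satisfies $0 \leq \|P^\top v\|^2 = v^\top PP^\top v \leq v^\top X v = 0$ and hence $P^\top v = \mathbf{0}$. Therefore $\rank(X) \geq \rank(PP^\top) = \rank(P) = r$, which finishes the proof. (Alternatively, writing $Y = ZZ^\top$ with $Z = \begin{psmallmatrix} A \\ B \end{psmallmatrix}$, one has $AA^\top = \mathbf{I}_r$, $BA^\top = P$ and $BB^\top = X$, so $\rank(X) = \rank(B) \geq \rank(BA^\top) = \rank(P) = r$; either route works.)

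I do not expect a genuine obstacle here. The one point that must not be skipped is that the naive inequality $\rank(X) \geq \rank(P)$ is useless on its own — for an arbitrary binary $P$ with $P^\top\mathbf{1} \geq \mathbf{1}$ the rank of $P$ can be as small as $1$ — so the diagonal comparison forcing $P$ to have at most one $1$ per row (equivalently, pairwise disjoint column supports) is what makes the argument go through. This is precisely where the interplay between positive semidefiniteness and integrality is used, mirroring the clique-packing picture attached to Theorem~\ref{Thm:binPSD}: the columns of $P$ index $r$ pairwise disjoint nonempty vertex sets that "fit inside" $X$.
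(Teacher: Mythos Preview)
Your proof is correct, but it takes a different route from the paper's. The paper applies Theorem~\ref{Thm:binPSD} directly to the binary PSD matrix $Y$, obtaining a decomposition $Y=\sum_{j=1}^{k}\begin{psmallmatrix}u_j\\x_j\end{psmallmatrix}\begin{psmallmatrix}u_j\\x_j\end{psmallmatrix}^\top$; the condition $\sum_j u_ju_j^\top=\mathbf{I}_r$ forces $\{u_j\}$ to contain $\mathbf{e}_1,\ldots,\mathbf{e}_r$ (and zeros), so that $P=[x_1\ \cdots\ x_r]$ with the $x_j$ pairwise orthogonal, and $P^\top\mathbf{1}\geq\mathbf{1}$ makes each $x_j$ nonzero. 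You bypass this decomposition entirely: the Schur complement gives $X\succeq PP^\top$, the diagonal comparison together with $X_{ii}\in\{0,1\}$ forces the rows of $P$ to have at most one nonzero entry (hence $P$ has pairwise disjoint, nonempty column supports and rank $r$), and Loewner monotonicity of rank finishes. Your argument is more self-contained --- it does not invoke Theorem~\ref{Thm:binPSD} --- while the paper's version keeps the clique-packing picture explicit and feeds directly into the subsequent Corollary~\ref{Cor:ExactRank}. Both arrive at the same structural fact that the columns of $P$ are $r$ nonzero vectors with disjoint supports; you reach it via diagonals, the paper via the binary rank-one decomposition.
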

\begin{proof}
    The assertion $X \succeq \bold{0}$ is trivial. It suffices to show that $\rank(X) \geq r$. As $Y \succeq \bold{0}$ and~$Y$ has binary entries, it follows from Theorem~\ref{Thm:binPSD} that
        $Y = \sum_{j =1}^k \begin{psmallmatrix}
            u_j \\ x_j
        \end{psmallmatrix} \begin{psmallmatrix}
            u_j \\ x_j
        \end{psmallmatrix}^\top$ 
    for some $u_j \in \{0,1\}^r$ and $x_j \in \{0,1\}^n$, $j \in [k]$. Since $\sum_{j = 1}^k u_ju_j^\top = \bold{I}_r$, we must have $k \geq r$. Moreover, the set~$\{u_j \, : \, \, j \in [k]\}$ must contain $\bold{e}_1, \ldots, \bold{e}_r$ and $k - r$ copies of $\bold{0}_r$. Without loss of generality, let us assume that the first $r$ vectors in $\{u_j \, : \, \, j\in [k]\}$ correspond to the elementary vectors. Then, it follows that~$P = \sum_{j = 1}^k x_ju_j^\top = \sum_{j = 1}^r x_j \bold{e}_j^\top = [x_1 ~ \dots ~x_r ]$. 
    Since $P^\top \bold{1} \geq \bold{1}$, it follows that the vectors~$x_j$, $j \in [r]$, cannot be the zero vector. Since these are moreover linearly independent, we have $\rank(X) \geq \rank(\sum_{j = 1}^r x_jx_j^\top) = r$. 
\end{proof}
Theorem~\ref{Thm:binPSDrank_lower} and Corollary~\ref{Cor:binPSDrank_upper} together impose the following integer semidefinite characterization of PSD~$\{0,1\}$--matrices of rank $r$. 
\begin{corollary}\label{Cor:ExactRank}
    Let $X \in \{0,1\}^{n \times n}$ be symmetric. If there exists a matrix $P \in \{0,1\}^{n \times r}$ with~${P^\top \bold{1} \geq \bold{1}}$, $P \bold{1} = \diag(X)$, such that $Y = \begin{psmallmatrix}
        \bold{I}_r & P^\top \\ P & X
    \end{psmallmatrix} \succeq \bold{0}$, then $X \succeq \bold{0}$ with~${\rank(X) = r}$.
\end{corollary}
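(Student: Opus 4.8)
The plan is to sandwich $\rank(X)$ between the lower bound supplied by Theorem~\ref{Thm:binPSDrank_lower} and the upper bound supplied by Corollary~\ref{Cor:binPSDrank_upper}, both of which become applicable once the hypotheses are unpacked. The hypothesis of Corollary~\ref{Cor:ExactRank} is, in effect, the union of the hypotheses of those two earlier results together with the extra normalization $P\mathbf{1}_r = \diag(X)$, which is the glue that makes the upper-bound result fit.

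First I would note that the lower bound is immediate. The hypothesis furnishes a matrix $P \in \{0,1\}^{n\times r}$ with $P^\top \mathbf{1} \geq \mathbf{1}$ such that $Y = \begin{psmallmatrix} \mathbf{I}_r & P^\top \\ P & X \end{psmallmatrix} \succeq \mathbf{0}$, which is precisely what Theorem~\ref{Thm:binPSDrank_lower} requires. Hence $X \succeq \mathbf{0}$ and $\rank(X) \geq r$.

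For the upper bound, the key step is to exhibit the block matrix appearing in Corollary~\ref{Cor:binPSDrank_upper} as a congruence transform of $Y$. Let $M := \begin{psmallmatrix} \mathbf{1}_r & \mathbf{0} \\ \mathbf{0} & \mathbf{I}_n \end{psmallmatrix}$, viewed as an $(r+n)\times(1+n)$ matrix. A direct block computation gives $M^\top Y M = \begin{psmallmatrix} \mathbf{1}_r^\top\mathbf{1}_r & \mathbf{1}_r^\top P^\top \\ P\mathbf{1}_r & X \end{psmallmatrix}$; invoking the hypothesis $P\mathbf{1}_r = \diag(X)$ (hence also $\mathbf{1}_r^\top P^\top = \diag(X)^\top$) together with $\mathbf{1}_r^\top\mathbf{1}_r = r$, this equals $\begin{psmallmatrix} r & \diag(X)^\top \\ \diag(X) & X \end{psmallmatrix}$. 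Since $Y \succeq \mathbf{0}$ and congruence preserves positive semidefiniteness, this matrix is PSD, so Corollary~\ref{Cor:binPSDrank_upper} yields $X \succeq \mathbf{0}$ with $\rank(X) \leq r$. Combining with the lower bound gives $\rank(X) = r$.

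The argument is essentially bookkeeping on top of the two cited results; the only point that needs to be spotted is the congruence by $M$, which simultaneously contracts the identity block $\mathbf{I}_r$ to the scalar $r$ and collapses the column block $P$ to $\diag(X)$ via $P\mathbf{1}_r = \diag(X)$, exactly matching the shape required by Corollary~\ref{Cor:binPSDrank_upper}. I do not anticipate any real obstacle beyond that observation.
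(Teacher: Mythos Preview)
Your proof is correct and matches the paper's argument essentially verbatim: the paper also applies Theorem~\ref{Thm:binPSDrank_lower} for the lower bound and then observes that $\left( \mathbf{1}_r^\top \oplus \mathbf{I}_n \right) Y \left( \mathbf{1}_r^\top \oplus \mathbf{I}_n \right)^\top = \begin{psmallmatrix} r & \diag(X)^\top \\ \diag(X) & X \end{psmallmatrix} \succeq \mathbf{0}$ to invoke Corollary~\ref{Cor:binPSDrank_upper} for the upper bound. Your matrix $M$ is just the transpose of the paper's $\mathbf{1}_r^\top \oplus \mathbf{I}_n$, so the congruence is literally the same.
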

\begin{proof}
    It immediately follows from Theorem~\ref{Thm:binPSDrank_lower} that ${\color{black}X} \succeq \bold{0}$ with $\rank(X) \geq r$. Moreover, since $Y \succeq \bold{0}$, we also know that
    \begin{align*}
        \left( \bold{1}_r^\top \oplus \bold{I}_n \right) Y \left( \bold{1}_r^\top \oplus \bold{I}_n \right)^\top = \begin{pmatrix}
            \bold{1}_r^\top \bold{I}_r \bold{1}_r &  \bold{1}_r^\top P^\top \bold{I}_n \\
            \bold{I}_n P \bold{1}_r &  \bold{I}_n X \bold{I}_n 
        \end{pmatrix} = \begin{pmatrix}
            r & \diag(X)^\top \\
            \diag(X) & X
        \end{pmatrix} \succeq \bold{0}. 
    \end{align*}
    It then follows from Corollary~\ref{Cor:binPSDrank_upper} that $\rank(X) \leq r$. 
\end{proof}

The integer semidefinite characterization of $\mathcal{D}^n_r$ given in~\eqref{Eq:SetDdiscreteLMI} shows that if a $\{0,1\}$--matrix satisfies a certain linear matrix inequality, then a rank condition is implied. For the case of rank-one matrices, we can show that the converse implication does also hold, i.e., if a rank-one matrix satisfies a certain linear matrix inequality, then its entries must be in $\{0,1\}$. 
\begin{theorem} \label{Thm:binPSDrank}
Let $Y=\begin{psmallmatrix}
1 & {x}^\top  \\ {x} & {X}
\end{psmallmatrix} \succeq \mathbf{0}$ with $\diag({X}) = {x}$. Then, ${\rank(Y)} = 1$ if and only if~${{X} \in \{0,1\}^{n \times n}}$. 
\end{theorem}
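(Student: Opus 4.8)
The plan is to treat the two implications separately; both reduce to a short Schur-complement computation, so the argument is essentially self-contained.

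\textbf{The ``only if'' direction.} I would start by assuming $\rank(Y) = 1$. Since $Y \succeq \mathbf{0}$ and $Y \neq \mathbf{0}$, it can be written as $Y = \begin{psmallmatrix} a \\ w \end{psmallmatrix}\begin{psmallmatrix} a \\ w \end{psmallmatrix}^\top$ for some scalar $a$ and vector $w \in \mathbb{R}^n$. The top-left entry of $Y$ equals $1$, so $a^2 = 1$; replacing $(a,w)$ by $-(a,w)$ if necessary (which leaves $Y$ unchanged), I may take $a = 1$. Reading off the blocks then gives $x = aw = w$ and $X = ww^\top = xx^\top$, hence $X_{ii} = x_i^2$ for every $i$. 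Combined with the hypothesis $\diag(X) = x$, this forces $x_i^2 = x_i$, i.e. $x_i \in \{0,1\}$, and therefore $X_{ij} = x_i x_j \in \{0,1\}$ for all $i, j$, as required.

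\textbf{The ``if'' direction.} Conversely, suppose $X \in \{0,1\}^{n \times n}$. Note first that $x = \diag(X)$ is then a $\{0,1\}$-vector. Because $Y \succeq \mathbf{0}$ and its top-left $1\times 1$ block equals $1 > 0$, the Schur complement lemma yields $X - xx^\top \succeq \mathbf{0}$. Its $i$-th diagonal entry equals $X_{ii} - x_i^2 = x_i - x_i^2 = 0$, since $x_i \in \{0,1\}$. A positive semidefinite matrix with identically zero diagonal is the zero matrix: the $2\times 2$ principal minor on indices $i,j$ equals $-(X - xx^\top)_{ij}^2 \geq 0$, forcing $(X - xx^\top)_{ij} = 0$. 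Hence $X = xx^\top$, so $Y = \begin{psmallmatrix} 1 \\ x \end{psmallmatrix}\begin{psmallmatrix} 1 \\ x \end{psmallmatrix}^\top$, which is a nonzero outer product and thus has rank exactly $1$.

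\textbf{Expected difficulty.} I do not anticipate a genuine obstacle: the statement follows immediately from the two-block Schur complement together with the elementary fact that a PSD matrix with vanishing diagonal is zero. If one prefers to reuse earlier results, the ``if'' direction can instead be obtained from Corollary~\ref{Cor:binPSDrank_upper} with $r = 1$ (which gives $\rank(X) \leq 1$) followed by the same identity $x_i^2 = x_i$ to conclude $X = xx^\top$; the ``only if'' direction does not seem to simplify further.
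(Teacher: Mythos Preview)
Your proof is correct. The ``only if'' direction is essentially identical to the paper's argument (both deduce $X = xx^\top$ and then use $x_i^2 = x_i$).

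The ``if'' direction, however, takes a genuinely different route. The paper observes that $Y \in \{0,1\}^{(n+1)\times(n+1)}$ and then invokes Theorem~\ref{Thm:binPSD} to write $Y = \sum_{j=1}^k x_j x_j^\top$ with $x_j \in \{0,1\}^{n+1}$; it then argues combinatorially that because $Y_{11} = 1$ and the first column of $Y$ equals $\diag(Y)$, every index with $Y_{ii}=1$ must belong to the same clique as index $1$, forcing a single term in the decomposition. Your argument is more elementary and fully self-contained: the Schur complement $X - xx^\top \succeq \mathbf{0}$ has zero diagonal (since $x_i - x_i^2 = 0$ for $x_i \in \{0,1\}$), hence vanishes, giving $X = xx^\top$ directly. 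Your version avoids any dependence on the decomposition Theorem~\ref{Thm:binPSD} and is arguably the cleaner way to prove the statement; the paper's approach, on the other hand, ties the result into the clique-packing picture developed earlier and foreshadows the combinatorial viewpoint used elsewhere. Interestingly, your Schur-complement argument is precisely the one the paper later uses to prove the $\{0,\pm 1\}$ analogue (Proposition~\ref{prop:pm1b}).
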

\begin{proof}
$(\Longrightarrow ):$ 
If $\rank(Y) = 1$, then $Y = \bar{x}\bar{x}^\top$ with $\bar{x} = [
    1~ x^\top ]^\top \in \mathbb{R}^{n+1}$ and~${X = xx^\top}$. 
From the positive semidefiniteness of order two principal submatrices of  $Y$ we obtain~${\mathbf{0}_n \leq x \leq \mathbf{1}_n}$. Since 
$\diag(xx^\top) = {x}$, we have $x_i^2 = x_i$ for all $i \in [n]$, so
 $x \in \{0,1\}^n$. We conclude that~${{X} = xx^\top \in \{0,1\}^{n \times n}}$.

$(\Longleftarrow ):$
Since ${X} \in \{0,1\}^{n \times n}$ and ${x} = \diag({X})$, it follows that $Y \in \{0,1\}^{(n+1)\times (n+1)}$. From Theorem~\ref{Thm:binPSD} it follows that $Y= \sum_{j=1}^k x_jx_j^\top$ for some ${x_j} \in \{0,1\}^{n+1}$, $j \in [k]$, i.e.,~$Y$ can be decomposed in terms of cliques. Since $Y_{11} = 1$ and $\diag(Y) = (1, {x}^\top )^\top$, all indices~$i \in [n+1]$ for which $Y_{ii} = 1$ must be in the same clique as the first index. 
Hence, the decomposition consists of only one clique and $\rank(Y) = 1$. 
\end{proof}
Theorem~\ref{Thm:binPSDrank} plays a central role in deriving integer SDP formulations of binary quadratic problems defined over vectors of variables in Section~\ref{section:vector lifting}. However,  Theorem~\ref{Thm:binPSDrank} cannot be extended to matrices with a rank larger than one. \textcolor{black}{That is, if $Y$ is a PSD matrix satisfying $\diag(Y) = Y\bold{e}_1$ and $Y_{11} = 2$, then integrality of $Y$ is not equivalent to be of rank 2.} 
For example, the matrix $Y=\frac{1}{2}( \bold{J}_3 + 3 \mathbf{E}_{11})$ satisfies $Y \succeq \bold{0}$, $\diag(Y) = Y\mathbf{e}_1$ and $\rank(Y) = 2$, but $Y$ is not integer. 

\medskip

The characterizations given in~\eqref{Eq:setDbinaryCP} and~\eqref{Eq:SetDdiscreteLMI} rely on conditions involving discreteness. Let us now move on to continuous descriptions. Of course, since $\mathcal{D}^n_r$ is itself a discrete set, a continuous description does not aim at describing $\mathcal{D}^n_r$, but rather its convex hull, i.e.,
\begin{align} 
    \mathcal{P}^n_r &:= \Conv(\mathcal{D}^n_r). \label{Def:setP}
\end{align}
Observe that although the matrices in $\mathcal{D}^n_r$ have an upper bound on the rank, the polytopes~$\mathcal{P}^n_r$ are full-dimensional, since $\frac{1}{n}\bold{I}_n \in \mathcal{P}^n_r$ for all $1 \leq r \leq n$. 
In order to gain more insight into the structure of~$\mathcal{P}^n_r$, we introduce the notion of a so-called packing family. 

\begin{definition}
    Let $T$ be a finite set of elements. A collection $\mathcal{F}$ of nonempty subsets of $T$ is called a packing of $T$ if the subsets in $\mathcal{F}$ are pairwise disjoint. The family of all packings of $T$ is called the packing family of $T$, denoted by $\bold{F}(T)$. 
\end{definition}
Observe that $\mathcal{F} = \emptyset$ also belongs to $\bold{F}(T)$. 
Next, we define the notion of an~$r$-packing of $T$.

\begin{definition} \label{Def:rpacking}
    Let $T$ be a finite set of elements. A packing $\mathcal{F}$ of $T$ is called an~$r$-packing of $T$ if~$|\mathcal{F}| \leq r$. The family of all $r$-packings of $T$ is called the~$r$-packing family of $T$, denoted by $\bold{F}_r(T)$. 
\end{definition}

The $r$-packing family of $[n]$ can be exploited to describe $\mathcal{P}^n_r$. 
Let $X \in \mathcal{D}^n_r$. By Theorem~\ref{Thm:binPSD} we know that~$X$ is the sum of at most $r$ rank-one PSD $\{0,1\}$--matrices. From a combinatorial point of view, this implies that $X$ corresponds to an $r$-packing of $[n]$. In fact, there is a bijection between the matrices in $\mathcal{D}_r^n$ and the $r$-packings in $\bold{F}_r([n])$. For any $r$-packing $\mathcal{F}$, let $\mathbf{E}_{\mathcal{F}} := \sum_{S \in \mathcal{F}} \mathbbm{1}_S \mathbbm{1}_S^\top$. Then, we obtain the following polyhedral description of $\mathcal{P}^n_r$ for all positive integers $r \leq n$:
\begin{align} \label{Eq:PackingDescription}
    \mathcal{P}^n_r = \left\{ X \in \mathcal{S}^n \, : \, \,  X = \sum_{\mathcal{F} \in \bold{F}_r([n])} \lambda_{\mathcal{F}} \bold{E}_{\mathcal{F}},  \sum_{\mathcal{F} \in \bold{F}_r([n])} \lambda_{\mathcal{F}} = 1,~ \lambda_{\mathcal{F}} \geq 0 \text{  for all  } \mathcal{F} \in \bold{F}_r([n]) \right\}. 
\end{align}
{\color{black}We call}  the description above the {\em packing description} of $\mathcal{P}^n_r$. Let us now consider the cardinality of the vertices of $\mathcal{P}^n_r$.

In the vein of Definition~\ref{Def:rpacking}, we call $\mathcal{F} \subseteq \mathbb{P}([n])$ an $r$-partition of $[n]$ if it is an $r$-packing with~$\bigcup_{S \in \mathcal{F}} = [n]$. Here, $\mathbb{P}([n])$ denotes the power set of $[n]$.
The number of partitions of the set $[n]$ into $k$ nonempty subsets is in the literature known as the Stirling number of the second kind, denoted by ${ n \brace k}$. The total number of partitions of $[n]$ equals the Bell number~$B_n$~\cite{Bell}, for which we have $B_n = \sum_{k = 0}^n {n \brace k}$.
We can now show the following result regarding the cardinality of $\mathcal{D}^n_r$. 
\begin{theorem} \label{Thm:BellStirling}
For $n \geq 1$ and $0 \leq r \leq n$, we have $|\mathcal{D}_r^n| = \sum_{k = 1}^{r+1}{ n+1 \brace k}$. In particular,~${|\mathcal{D}_1^n| = 2^n}$ and~$|\mathcal{D}_n^n| = B_{n+1}$. 
\end{theorem}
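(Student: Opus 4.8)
The plan is to set up a bijection between $\mathcal{D}^n_r$ and a family of packings of $[n]$, and then to encode that family of packings in terms of \emph{partitions} of the larger set $[n+1]$, whose count is governed by Stirling numbers. The first step uses the combinatorial interpretation already established after Theorem~\ref{Thm:binPSD}: every $X \in \mathcal{D}^n_r$ corresponds uniquely to an $r$-packing of $[n]$, i.e.\ an element of $\bold{F}_r([n])$. Indeed, a decomposition $X = \sum_{j=1}^k \mathbbm{1}_{S_j}\mathbbm{1}_{S_j}^\top$ with the $S_j$ pairwise disjoint and $k \leq r$ is uniquely determined by $X$ (the blocks are the index classes of the equivalence ``$i \sim j$ iff $X_{ij}=1$'' restricted to $\{i : X_{ii}=1\}$), so the map $X \mapsto \{S_1,\dots,S_k\}$ is a bijection $\mathcal{D}^n_r \to \bold{F}_r([n])$. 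Hence $|\mathcal{D}^n_r| = |\bold{F}_r([n])|$.

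The second and key step is to count $r$-packings of $[n]$ via partitions of $[n+1]$. Given an $r$-packing $\mathcal{F} = \{S_1,\dots,S_k\}$ of $[n]$ with $k \leq r$, form the partition $\mathcal{F}' := \{S_1,\dots,S_k, T\}$ of $[n+1]$, where $T := ([n]\setminus\bigcup_j S_j) \cup \{n+1\}$ is the ``leftover'' class together with the new element $n+1$. Since $n+1 \in T$, the class $T$ is always nonempty, so $\mathcal{F}'$ is a genuine partition of $[n+1]$ into exactly $k+1$ nonempty blocks. Conversely, any partition of $[n+1]$ into $k+1$ nonempty blocks, with $k+1 \leq r+1$, yields an $r$-packing of $[n]$ by deleting the block containing $n+1$ (and discarding that block entirely, including whatever elements of $[n]$ it held). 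These two maps are mutually inverse, establishing a bijection between $\bold{F}_r([n])$ and the set of partitions of $[n+1]$ into at most $r+1$ nonempty parts. Counting the latter by the number of parts gives $|\mathcal{D}^n_r| = \sum_{k=1}^{r+1} {n+1 \brace k}$, where the index $k$ now denotes the number of blocks of the partition of $[n+1]$ (ranging from $1$, corresponding to the empty packing, up to $r+1$).

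The two special cases then follow by direct substitution. For $r = 1$: $|\mathcal{D}^n_1| = {n+1 \brace 1} + {n+1 \brace 2} = 1 + (2^n - 1) = 2^n$, using the standard identity ${m \brace 2} = 2^{m-1}-1$. For $r = n$: the sum $\sum_{k=1}^{n+1}{n+1 \brace k}$ ranges over all possible block-counts for a set of size $n+1$ (note ${n+1 \brace 0} = 0$), so it equals $\sum_{k=0}^{n+1}{n+1 \brace k} = B_{n+1}$ by the stated formula for the Bell number.

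I expect the main obstacle to be purely expository rather than mathematical: one must be careful about the ``off-by-one'' bookkeeping in the bijection --- in particular that the empty packing $\mathcal{F} = \emptyset$ maps to the one-block partition $\{[n+1]\}$ of $[n+1]$ (contributing the ${n+1 \brace 1}$ term), and that elements of $[n]$ not covered by any block of an $r$-packing are genuinely ``forgotten'' when passing to $[n+1]$ and absorbed into the block of $n+1$. Verifying that the forward and backward maps are well-defined and mutually inverse, and that the constraint $|\mathcal{F}| \leq r$ corresponds exactly to ``at most $r+1$ parts,'' is the crux; everything else is a routine application of known identities for ${n \brace k}$ and $B_n$.
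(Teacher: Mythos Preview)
Your proposal is correct and follows essentially the same approach as the paper: both establish the bijection $\mathcal{D}^n_r \leftrightarrow \bold{F}_r([n])$, then biject $r$-packings of $[n]$ with partitions of $[n+1]$ into at most $r+1$ blocks via the ``add $\{n+1\}\cup(\text{leftovers})$ as a new block / delete the block containing $n+1$'' construction, and finish with the same Stirling and Bell identities. Your write-up is slightly more detailed on why the maps are well-defined and mutually inverse, but the argument is identical in substance.
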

\begin{proof}
It follows from the discussion above that $|\mathcal{D}_r^n|$ equals the number of $r$-packings in~$\bold{F}_r([n])$. In order to count these, we count the number of packings that consist of exactly $k$ subsets, while $k$ ranges from 0 to $r$. \textcolor{black}{Any packing of $[n]$ into $k$ subsets corresponds to a partition of $[n+1]$ into $k + 1$ subsets. To see this, observe that to each packing $\mathcal{F}$ of $[n]$ into $k$ subsets one can add a~$(k+1)$th set containing the element $n+1$ and the elements not covered by $\mathcal{F}$. Conversely, given a partition of $[n+1]$ into $k + 1$ subsets, dropping the set containing the element $n+1$ yields a packing of $[n]$ consisting of exactly $k$ subsets.} Hence, the number of packings of $[n]$ consisting of exactly $k$ subsets equals ${ n+1 \brace k+1}$ and     
$|\mathcal{D}_r^n| = \sum_{k = 0}^r {n+1 \brace k+1} = \sum_{k = 1}^{r+1} {n+1 \brace k}$.
    For the special case $r = 1$, we obtain $|\mathcal{D}^n_1| = {n+1 \brace 1} + {n+1 \brace 2} = 1 + \frac{2^{n+1} - 2}{2} = 2^n$. When $r = n$, we exploit ${n+1 \brace 0} = 0$ to conclude that $|\mathcal{D}^n_n| = \sum_{k = 1}^{n+1} {n+1 \brace k} = \sum_{k = 0}^{n+1} {n+1 \brace k} = B_{n+1}$.
\end{proof}
The polytope $\mathcal{P}^n_r$ has several relationships with other well-known polytopes from the literature. Letchford and S{\o}rensen \cite{LetchfordSorensen} study the polytope $\mathcal{P}^n_n$, albeit in a different embedding, and refer to it as the binary PSD polytope of order $n$. They emphasize its relationship with the clique partitioning polytope that was introduced by Gr\"otschel and Wakabayashi~\cite{GrotschelWakabayashi} and later studied in \cite{BandeltEtAl, OostenEtAl}. Given the complete graph~${G = (V,E)}$, a clique partition is a subset $A \subseteq E$ such that there is a partition of $V$ into nonempty disjoint sets $V_1, \ldots, V_k$ such that each $V_j$, $j \in [k]$, induces a clique in $G$ and~${A = \bigcup_{j \in [k]} \{\{i,\ell \} \, : \, \, i, \ell \in V_j, i \neq \ell\}}$. The incidence vectors of clique partitions are only defined on the edge set, and therefore the clique partition polytope can be seen as a projection of $\mathcal{P}^n_n$. 

Among one of the first graph partition problems is the one considered by Chopra and Rao~\cite{ChopraRao}. Given an undirected graph $G$, the vertices need to be partitioned into at most $k$ subsets so as to minimize the total cost of edges cut by the partition. 
If $G$ is the complete graph, the partition polytope $P\mathit{1}(r)$ considered in~\cite{ChopraRao} coincides with $\mathcal{P}^n_r$ (apart from the embedding). 

The polytope $\mathcal{P}^n_n$ can also be related to the stable set polytope. Let~${G_{\mathbb{P}} = (V_{\mathbb{P}}, E_{\mathbb{P}})}$ be the power set graph, i.e., each vertex in $V_{\mathbb{P}}$ corresponds to a nonempty subset of~$[n]$ and the edge set is defined as~${E_{\mathbb{P}} := \{\{S, T\} \in  V_{\mathbb{P}}^{(2)}\, : \, \, S \cap T \neq \emptyset \}}$. A set of vertices is stable in $G_{\mathbb{P}}$ if and only if its corresponding collection of subsets is a packing of $[n]$. Hence, the packing family $\bold{F}_n([n])$ is the collection of all stable sets in $G_{\mathbb{P}}$. It follows that there is a bijection between the elements in $\mathcal{P}^n_n$ and the stable set polytope of $G_{\mathbb{P}}$. 

Finally, for $r = 1$, the $r$-packings of $[n]$ are subsets of $[n]$, so the polytope $\mathcal{P}^n_1$ simplifies to
\begin{align} \label{Eq:CliqueDescription}
    \mathcal{R}^n_1 := \left\{ X \in \mathcal{S}^n \, : \, \, X = \sum_{S \subseteq [n]} \theta_S \mathbbm{1}_S \mathbbm{1}_S^\top,~ \sum_{S \subseteq [n]} \theta_S = 1,~\theta_S \geq 0 \text{ for all } S \subseteq [n]  \right\}. 
\end{align}
The polytope $\mathcal{R}^n_1$ relates to the convex hull of the characteristic vectors of all cliques in $K_n$, i.e., the clique polytope of $K_n$. This polytope is in the literature also known as the complete set packing polytope, see~\cite{BulhoesEtAl}. \textcolor{black}{Finally, apart from the embedding, the polytope $\mathcal{R}^n_1$ also coincides with the Boolean quadric polytope~\cite{Padberg}.} 

\medskip

Another continuous formulation of the convex hull of PSD $\{0,1\}$--matrices is given by a conic description. The cone of completely positive matrices is defined as
    $\mathcal{CP}^n := \Conv \left (  \left\{ xx^\top \, : \, \, x \in \mathbb{R}^n_+ \right \} \right )$.
An extension of the completely positive matrices are the so-called set-completely positive matrices, see e.g., \cite{LiederEtAl,BomzeGable}, where the membership condition $x \in \mathbb{R}^n_+$ is replaced by~${x \in \mathcal{K}}$ for a general convex cone $\mathcal{K}$. Lieder et al.~\cite{LiederEtAl} considered the following set-completely positive matrix cone: 
\begin{align} \label{Def:SetCompletelyPositive}
    \mathcal{SCP}^{n} & := \Conv \left( \left\{ xx^\top \, : \, \, x\in \mathbb{R}^n_+,~ x_1 \geq x_i \text{ for all } i \in \{2, \ldots, n\}\right\} \right). 
\intertext{Since the membership condition given in~\eqref{Def:SetCompletelyPositive} is more restricted than $x \in \mathbb{R}^n_+$, we have~${\mathcal{SCP}^n \subsetneq \mathcal{CP}^n}$. Let us now consider the following set-completely positive matrix set:}
    \mathcal{C}^n_1 & := \left\{ X \in \mathcal{S}^n \, : \, \, \begin{pmatrix}
        1 & x^\top \\ x& X
    \end{pmatrix} \in \mathcal{SCP}^{n+1},~~\diag(X) = x\right\}. \label{Eq:SCP_rank1}
\end{align}
The following result follows from Lieder et al.~\cite{LiederEtAl}. 
\begin{theorem}[\cite{LiederEtAl}] \label{Thm:Lieder}
    We have $\mathcal{P}^n_1 = \mathcal{C}^n_1$. 
\end{theorem}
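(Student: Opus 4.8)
The plan is to establish the two inclusions separately, translating membership in each set into an explicit convex combination of rank-one generators and matching the generators on both sides.

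\medskip

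\noindent\emph{Step 1: $\mathcal{P}^n_1 \subseteq \mathcal{C}^n_1$.} Take $X \in \mathcal{P}^n_1$. By the clique description~\eqref{Eq:CliqueDescription}, $X = \sum_{S \subseteq [n]} \theta_S \mathbbm{1}_S \mathbbm{1}_S^\top$ with $\theta_S \geq 0$, $\sum_S \theta_S = 1$; note $\diag(X) = \sum_S \theta_S \mathbbm{1}_S = x$, so the diagonal condition in~\eqref{Eq:SCP_rank1} holds automatically. For each $S$, set $v_S := (1,\mathbbm{1}_S^\top)^\top \in \mathbb{R}^{n+1}$. Then $v_S \in \mathbb{R}^{n+1}_+$ and its first coordinate $1$ dominates every other coordinate (which is $0$ or $1$), so $v_S v_S^\top$ is one of the generators of $\mathcal{SCP}^{n+1}$ in~\eqref{Def:SetCompletelyPositive}. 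Since
$\begin{psmallmatrix} 1 & x^\top \\ x & X \end{psmallmatrix} = \sum_S \theta_S v_S v_S^\top$
is a convex combination of such generators, it lies in $\mathcal{SCP}^{n+1}$, hence $X \in \mathcal{C}^n_1$.

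\medskip

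\noindent\emph{Step 2: $\mathcal{C}^n_1 \subseteq \mathcal{P}^n_1$.} Take $X \in \mathcal{C}^n_1$, so $\begin{psmallmatrix} 1 & x^\top \\ x & X \end{psmallmatrix} \in \mathcal{SCP}^{n+1}$ with $\diag(X) = x$. Write this matrix as $\sum_{k} \lambda_k w_k w_k^\top$ (finite sum), $\lambda_k > 0$, $\sum_k \lambda_k = 1$, where each $w_k = (w_{k,0}, \tilde w_k^\top)^\top \in \mathbb{R}^{n+1}_+$ satisfies $w_{k,0} \geq w_{k,i}$ for all $i$. Comparing the $(1,1)$ entry gives $\sum_k \lambda_k w_{k,0}^2 = 1$; comparing $\diag$ against the first column gives, in the $i$th coordinate, $\sum_k \lambda_k w_{k,i}^2 = \sum_k \lambda_k w_{k,0} w_{k,i}$, i.e. $\sum_k \lambda_k w_{k,i}(w_{k,0} - w_{k,i}) = 0$. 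Since every term is nonnegative, for each $k$ and each $i$ we must have $w_{k,i} \in \{0, w_{k,0}\}$. Thus $\tilde w_k = w_{k,0}\,\mathbbm{1}_{S_k}$ for some $S_k \subseteq [n]$, and $w_k w_k^\top = w_{k,0}^2 \begin{psmallmatrix} 1 & \mathbbm{1}_{S_k}^\top \\ \mathbbm{1}_{S_k} & \mathbbm{1}_{S_k}\mathbbm{1}_{S_k}^\top \end{psmallmatrix}$. Setting $\mu_k := \lambda_k w_{k,0}^2 \geq 0$, the $(1,1)$ comparison gives $\sum_k \mu_k = 1$, and reading off the lower-right block yields $X = \sum_k \mu_k \mathbbm{1}_{S_k}\mathbbm{1}_{S_k}^\top$, a convex combination of rank-one PSD $\{0,1\}$--matrices each lying in $\mathcal{D}^n_1$. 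Hence $X \in \Conv(\mathcal{D}^n_1) = \mathcal{P}^n_1$.

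\medskip

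\noindent\emph{Main obstacle.} The only delicate point is Step~2: one must use \emph{both} the diagonal constraint $\diag(X) = x$ and the dominance condition $w_{k,0} \geq w_{k,i}$ to force each generator's tail to be a scalar multiple of a $\{0,1\}$--indicator. The sign argument (each summand $\lambda_k w_{k,i}(w_{k,0}-w_{k,i})$ is nonnegative and they sum to zero, so each vanishes) is what makes this work; without the dominance condition the factor $w_{k,0}-w_{k,i}$ could be negative and the conclusion would fail. Everything else is bookkeeping. I would also remark that this is essentially the content of the corresponding result in~\cite{LiederEtAl}, reformulated in the present notation.
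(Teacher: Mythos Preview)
Your proof is correct. Note, however, that the paper does not give its own proof of this theorem: it is stated as a result from Lieder et al.~\cite{LiederEtAl} and simply cited without argument. Your write-up therefore supplies what the paper omits, and --- as you yourself observe at the end --- it is essentially the argument of~\cite{LiederEtAl} recast in the present notation. The key step, forcing each generator $w_k$ to have tail $w_{k,0}\mathbbm{1}_{S_k}$ via the identity $\sum_k \lambda_k w_{k,i}(w_{k,0}-w_{k,i})=0$ together with nonnegativity of each summand, is exactly the mechanism used there. One cosmetic remark: since $\mathcal{SCP}^{n+1}$ is the convex hull of a cone, writing an element as a \emph{convex} combination (with $\sum_k\lambda_k=1$) rather than a conic one is harmless --- any conic decomposition can be renormalized --- but you might say a word to that effect, and likewise note that terms with $w_{k,0}=0$ are identically zero and can be dropped.
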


A natural question is whether the descriptions $\mathcal{R}^n_1$ and $\mathcal{C}^n_1$ {\color{black}for~$\mathcal{P}^n_1$} given in~\eqref{Eq:CliqueDescription} and~\eqref{Eq:SCP_rank1}, respectively,  can be extended to higher ranks. The extensions of these sets are as follows:
\begin{align}
    \mathcal{R}^n_r & := \left\{ X \in \mathcal{S}^n \, : \, \, 
        X = \sum_{S \subseteq [n]} \theta_S \mathbbm{1}_S \mathbbm{1}_S^\top,~ \sum_{S \subseteq [n]} \theta_S = r, ~
        \sum_{S: i \in S} \theta_S \leq 1~\forall i \in [n],~\theta_S \geq 0 ~\forall S \subseteq [n]
     \right\}, \\
    \mathcal{C}^n_r & := \left\{ X \in \mathcal{S}^n \, : \begin{pmatrix}
        r & \diag(X)^\top \\ \diag(X) & X
    \end{pmatrix} \in \mathcal{SCP}^{n+1},~~\diag(X) \leq \bold{1}_n \right\}. \label{Eq:SCP_rankr}
\end{align} 
The extension from $\mathcal{C}^n_1$ to $\mathcal{C}^n_r$ follows from the intersection of the Minkowski sum of $r$ copies of~$\mathcal{C}^n_1$ with the upper bound constraint $X \leq \bold{J}_n$. Since $X_{ii} \geq X_{ij}$ for all $i,j \in [n]$ if $X \in \mathcal{C}^n_1$, it suffices to add $\diag(X) \leq \bold{1}_n$. The extension from $\mathcal{R}^n_1$ to $\mathcal{R}^n_r$ is derived as follows. If~$X \in \mathcal{P}^n_r$,  then~$X = \sum_{\mathcal{F} \in \bold{F}_r([n])} \lambda_\mathcal{F} \bold{E}_\mathcal{F}$ for some nonnegative weights $\lambda_{\mathcal{F}}$. By splitting each~$r$-packing into its separate subsets, we obtain
\begin{align*}
    X &= \sum_{\mathcal{F} \in \bold{F}_r([n])} \hspace{-0.1cm} \lambda_\mathcal{F} \bold{E}_\mathcal{F} = \sum_{\mathcal{F} \in \bold{F}_r([n])} \hspace{-0.1cm} \lambda_\mathcal{F} \sum_{S \in \mathcal{F}} \mathbbm{1}_S \mathbbm{1}_S^\top = \sum_{S \subseteq [n]} \sum_{\substack{\mathcal{F} \in \bold{F}_r([n]) : \\ S \in \mathcal{F}}} \lambda_\mathcal{F} \mathbbm{1}_S \mathbbm{1}_S^\top = \sum_{S \subseteq [n]} \theta_S \mathbbm{1}_S \mathbbm{1}_S^\top, 
\end{align*}
where $\theta_S := \sum_{\mathcal{F} \in \bold{F}_r([n]) : S \in \mathcal{F}} \lambda_\mathcal{F}$. Moreover, $\sum_{S \subseteq [n]}\theta_S   = \sum_{\mathcal{F} \in \bold{F}_r([n])} \lambda_{F} |\mathcal{F}| \leq  r \sum_{\mathcal{F} \in \bold{F}_r([n])} \lambda_{F} = r$. By increasing $\theta_{\emptyset}$, we obtain $\sum_{S \subseteq [n]} \theta_S = r$. Finally, since $X_{ii} \leq 1$ for $i \in [n]$, we have~${\sum_{S: i \in S}\theta_S \leq 1}$. We conclude that $\mathcal{P}^n_r \subseteq \mathcal{R}^n_r$. 

Unfortunately, for $r \geq 2$, the sets $\mathcal{R}^n_r$ and $\mathcal{C}^n_r$ do no longer exactly describe $\mathcal{P}^n_r$. 
Namely, consider the matrix $X=\frac{1}{2}P P^\top$ where $P=  \mathbf{I}_3+\mathbf{E}_{23} + \mathbf{E}_{32} + \mathbf{E}_{31} + \mathbf{E}_{41}.$
For this matrix one can verify that $X \in \mathcal{R}^4_2$ and $X \in \mathcal{C}^4_2$, while $X \notin \mathcal{P}^4_2$. For $r \geq 2$,   the following \textcolor{black}{relationship} between  $\mathcal{P}^n_r$, $\mathcal{C}^n_r$, $\mathcal{R}^n_r$ holds. 
\begin{theorem}
    We have $\mathcal{P}^n_r \subseteq {\color{black} \mathcal{C}^n_r = \mathcal{R}^n_r}$, while for $r = 1$ the three sets are equal.
\end{theorem}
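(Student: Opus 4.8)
The plan is to establish the three claims in sequence: (1) $\mathcal{P}^n_r \subseteq \mathcal{C}^n_r$, (2) $\mathcal{C}^n_r = \mathcal{R}^n_r$, and (3) equality of all three when $r = 1$. Claims (1) and (3) are essentially already in hand: the inclusion $\mathcal{P}^n_r \subseteq \mathcal{R}^n_r$ was derived explicitly in the paragraph preceding the theorem by splitting an $r$-packing into its constituent subsets, and once $\mathcal{C}^n_r = \mathcal{R}^n_r$ is proven, (1) follows; for $r = 1$, Theorem~\ref{Thm:Lieder} gives $\mathcal{P}^n_1 = \mathcal{C}^n_1$ and equation~\eqref{Eq:CliqueDescription} together with the discussion around $\mathcal{R}^n_1$ gives $\mathcal{P}^n_1 = \mathcal{R}^n_1$ (noting that for $r=1$ the constraint $\sum_{S : i \in S}\theta_S \le 1$ is implied by $\sum_S \theta_S = 1$, so $\mathcal{R}^n_1$ as defined reduces to the set in~\eqref{Eq:CliqueDescription}). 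So the crux is the identity $\mathcal{C}^n_r = \mathcal{R}^n_r$.

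For $\mathcal{R}^n_r \subseteq \mathcal{C}^n_r$: take $X = \sum_{S \subseteq [n]} \theta_S \mathbbm{1}_S\mathbbm{1}_S^\top$ with $\sum_S \theta_S = r$, $\sum_{S : i \in S}\theta_S \le 1$, $\theta_S \ge 0$. First I would check $\diag(X) \le \bold{1}_n$, which is immediate since $X_{ii} = \sum_{S : i \in S}\theta_S \le 1$. Then I would exhibit the bordered matrix $\begin{psmallmatrix} r & \diag(X)^\top \\ \diag(X) & X \end{psmallmatrix}$ as an element of $\mathcal{SCP}^{n+1}$: for each $S$, form the vector $v_S := \begin{psmallmatrix} 1 \\ \mathbbm{1}_S \end{psmallmatrix} \in \mathbb{R}^{n+1}_+$, which satisfies the membership condition $x_1 \ge x_i$ defining $\mathcal{SCP}^{n+1}$ in~\eqref{Def:SetCompletelyPositive} because its first coordinate is $1$ and all others are $0$ or $1$. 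Then $\sum_S \theta_S v_S v_S^\top = \begin{psmallmatrix} \sum_S \theta_S & \sum_S \theta_S \mathbbm{1}_S^\top \\ \sum_S \theta_S \mathbbm{1}_S & X \end{psmallmatrix} = \begin{psmallmatrix} r & \diag(X)^\top \\ \diag(X) & X \end{psmallmatrix}$, using $\sum_S \theta_S \mathbbm{1}_S = \diag(X)$ and $\sum_S \theta_S = r$; since $\mathcal{SCP}^{n+1}$ is a convex cone and the $\theta_S$ are nonnegative, this conic combination lies in $\mathcal{SCP}^{n+1}$. Hence $X \in \mathcal{C}^n_r$.

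For the reverse inclusion $\mathcal{C}^n_r \subseteq \mathcal{R}^n_r$: suppose $Y := \begin{psmallmatrix} r & \diag(X)^\top \\ \diag(X) & X \end{psmallmatrix} \in \mathcal{SCP}^{n+1}$ and $\diag(X) \le \bold{1}_n$. By definition of $\mathcal{SCP}^{n+1}$ as a convex hull, write $Y = \sum_{j} \mu_j w_j w_j^\top$ with $\mu_j > 0$, $\sum_j \mu_j = 1$, and each $w_j = \begin{psmallmatrix} a_j \\ b_j \end{psmallmatrix} \in \mathbb{R}^{n+1}_+$ satisfying $a_j \ge (b_j)_i$ for all $i$. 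Matching the $(1,1)$ entry gives $\sum_j \mu_j a_j^2 = r$. The key normalization step is to rescale: replace $w_j$ by $\tilde w_j := w_j / a_j$ when $a_j > 0$ (absorbing $a_j^2$ into the weight), so that each scaled vector has first coordinate $1$ and remaining coordinates in $[0,1]$; writing $\nu_j := \mu_j a_j^2$, we get $Y = \sum_j \nu_j \begin{psmallmatrix} 1 \\ c_j \end{psmallmatrix}\begin{psmallmatrix} 1 \\ c_j \end{psmallmatrix}^\top$ with $\sum_j \nu_j = r$ and $c_j \in [0,1]^n$ (terms with $a_j = 0$ force $b_j = 0$ and contribute nothing, so they may be dropped). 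Then reading off $X = \sum_j \nu_j c_j c_j^\top$ and $\diag(X) = \sum_j \nu_j c_j$, combined with $\diag(X_{c_jc_j^\top}) \le c_j$ entrywise — wait, that last inequality needs care: from $c_j \in [0,1]$ we get $(c_j)_i^2 \le (c_j)_i$, hence $\diag(\sum_j \nu_j c_jc_j^\top) \le \sum_j \nu_j c_j = \diag(X)$, which combined with the obvious opposite is not automatic, so instead I would argue directly that $X$ lies in the Minkowski sum of $r$ copies of $\mathcal{C}^n_1$ intersected with $\{X : \diag(X) \le \bold{1}\}$ and invoke the paragraph before the theorem (which states precisely that $\mathcal{C}^n_r$ is that intersection and that for elements of $\mathcal{C}^n_1$ one has $X_{ii} \ge X_{ij}$). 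Concretely: since $Y \in \mathcal{SCP}^{n+1}$ with $(1,1)$-entry $r$, one can split $Y = \sum_{k=1}^r Y_k$ with each $Y_k \in \mathcal{SCP}^{n+1}$ having $(1,1)$-entry $1$ (distribute the $\nu_j$ into $r$ groups of total weight $1$ each, subdividing a vector across groups if needed — a standard "packing the weights" argument); then each $\begin{psmallmatrix} 1 & d_k^\top \\ d_k & Z_k\end{psmallmatrix} := Y_k$ satisfies the hypotheses making $Z_k \in \mathcal{C}^n_1 = \mathcal{P}^n_1 = \mathcal{R}^n_1$ by Theorem~\ref{Thm:Lieder}, so $Z_k = \sum_{S} \theta^{(k)}_S \mathbbm{1}_S\mathbbm{1}_S^\top$ with $\sum_S \theta^{(k)}_S = 1$; setting $\theta_S := \sum_k \theta^{(k)}_S$ gives $X = \sum_S \theta_S \mathbbm{1}_S \mathbbm{1}_S^\top$ with $\sum_S \theta_S = r$, and $\sum_{S : i \in S}\theta_S = X_{ii} \le 1$ from $\diag(X) \le \bold{1}_n$. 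Hence $X \in \mathcal{R}^n_r$.

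The main obstacle I anticipate is the weight-splitting step in the last paragraph: decomposing a single element of $\mathcal{SCP}^{n+1}$ with $(1,1)$-entry $r$ into a sum of $r$ elements each with $(1,1)$-entry $1$ requires care because the constituent rank-one vectors $w_jw_j^\top$ have first coordinates $a_j^2$ that need not be commensurate with integer bins; one resolves this by allowing a given $w_jw_j^\top$ to be sliced proportionally across consecutive bins (a greedy fill), which is legitimate because $\mathcal{SCP}^{n+1}$ is a cone so any positive scalar multiple of a generator stays a generator. Everything else is bookkeeping with the defining equations of $\mathcal{R}^n_r$ and $\mathcal{C}^n_r$ and the already-established chain $\mathcal{C}^n_1 = \mathcal{P}^n_1 = \mathcal{R}^n_1$.
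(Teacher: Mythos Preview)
Your argument for $\mathcal{R}^n_r \subseteq \mathcal{C}^n_r$ is clean and essentially matches the paper's reverse direction. The problem is $\mathcal{C}^n_r \subseteq \mathcal{R}^n_r$.

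First, a gap in the weight-splitting route: after decomposing $Y = \sum_{k=1}^r Y_k$ with each $Y_k = \begin{psmallmatrix} 1 & d_k^\top \\ d_k & Z_k \end{psmallmatrix} \in \mathcal{SCP}^{n+1}$, you invoke Theorem~\ref{Thm:Lieder} to place $Z_k$ in $\mathcal{C}^n_1$. But membership in $\mathcal{C}^n_1$, see~\eqref{Eq:SCP_rank1}, also requires $\diag(Z_k) = d_k$, and an arbitrary slicing of the weights $\nu_j$ into $r$ bins does not deliver this for free. (It happens to hold, because the global condition already forces each $c_j \in \{0,1\}^n$, after which every piece inherits the diagonal identity; but you never verify it, so as written the step is incomplete.)

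Second, and more to the point, the splitting is unnecessary. The paper exploits that $\mathcal{SCP}^{n+1}$ is a cone and simply divides $Y$ by $r$: then
\[
\tfrac{1}{r}Y \;=\; \begin{pmatrix} 1 & \diag(\tfrac{1}{r}X)^\top \\ \diag(\tfrac{1}{r}X) & \tfrac{1}{r}X \end{pmatrix} \in \mathcal{SCP}^{n+1}
\]
with the diagonal condition automatic, so Theorem~\ref{Thm:Lieder} gives $\tfrac{1}{r}X \in \mathcal{P}^n_1$ directly, i.e., $\tfrac{1}{r}X = \sum_S \theta'_S \mathbbm{1}_S\mathbbm{1}_S^\top$ with $\sum_S \theta'_S = 1$; setting $\theta_S := r\theta'_S$ and using $\diag(X) \le \mathbf{1}_n$ finishes it. Ironically, the direct approach you abandoned was also one line from completion: you already had $\diag(X) = \sum_j \nu_j c_j$ from the first column of $Y$, and trivially $\diag(X) = \sum_j \nu_j (c_j \circ c_j)$ from the diagonal of the $X$-block; subtracting, with $c_j \in [0,1]^n$ and $\nu_j > 0$, forces $c_j \in \{0,1\}^n$, and then $X = \sum_j \nu_j \mathbbm{1}_{S_j}\mathbbm{1}_{S_j}^\top \in \mathcal{R}^n_r$ with no splitting at all.
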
 
\begin{proof}
   Since $\mathcal{P}^n_r = \Conv(\mathcal{D}_r^n)$, it suffices to consider membership of the elements in $\mathcal{D}^n_r$ in~$\mathcal{C}^n_r$. Let~$X \in \mathcal{D}^n_r$, then $X = \sum_{j=1}^r x_j x_j^\top$ for some $x_j \in \{0,1\}^n$, $j \in [r]$.  Let $Y^j := x_j x_j^\top$ for all~$j \in [r]$. We clearly have $\begin{psmallmatrix}
            1 & \diag(Y^j) \\
            \diag(Y^j) & Y^j
        \end{psmallmatrix} \in \mathcal{SCP}^{n+1} \text{ for all } j \in [r]$,
    from {\color{black}which} it follows that
    \begin{align*}
        \sum_{j = 1}^r \begin{pmatrix}
            1 & \diag(Y^j) \\
            \diag(Y^j) & Y^j
        \end{pmatrix}  = \begin{pmatrix}
            r & \diag(X) \\ \diag(X) & X
        \end{pmatrix}\in \mathcal{SCP}^{n+1}. 
    \end{align*}
    Moreover, $X \in \{0,1\}^{n \times n}$, so $\diag(X) \leq \bold{1}_n$. We conclude that $X \in \mathcal{C}^n_r$.

    To prove {\color{black}$\mathcal{C}^n_r = \mathcal{R}^n_r$}, let $X \in \mathcal{C}^n_r$. We define the matrix $Y$ as
    \begin{align} \label{Def:caligraphicY}
        Y := \frac{1}{r} \begin{pmatrix}
            r & \diag(X)^\top \\ \diag(X) & X
        \end{pmatrix} = \begin{pmatrix}
            1 & \diag(\frac{1}{r}X)^\top \\ \diag(\frac{1}{r}X) & \frac{1}{r}X
        \end{pmatrix}. 
    \end{align}
    From the fact that $X \in \mathcal{C}^n_r$, it follows that $Y \in \mathcal{SCP}^{n+1}$. Applying Theorem~\ref{Thm:Lieder} to the matrix $Y$ implies that $\frac{1}{r} X$ is a convex combination of rank one binary PSD matrices, i.e., there exist $\theta'_S \geq 0$ for all~$S \subseteq [n]$ with $\sum_{S \subseteq [n]}\theta'_S = 1$, such that $\frac{1}{r}X = \sum_{S \subseteq [n]} \theta'_S \mathbbm{1}_S \mathbbm{1}_S^\top$, or equivalently, $X = \sum_{S \subseteq [n]} r\theta'_S \mathbbm{1}_S \mathbbm{1}_S^\top$. 
    Now, let~$\theta_S := r\theta'_S$ for all~$S \subseteq [n]$,  from {\color{black}which it follows that} $\sum_{S \subseteq [n]} \theta_S = r$. Since~${\diag(X) \leq \bold{1}_n}$, it follows that~$X_{ii} = \sum_{S: i \in S}\theta_S \leq 1$. We conclude that $X \in \mathcal{R}^n_r$. 

    {\color{black}Finally, observe that the argument above can also be followed in the converse direction. That is, given $X \in \mathcal{R}^n_r$ with corresponding weights $\theta_S$ for all $S \subseteq [n]$, we define $\theta'_S := \frac{1}{r} \theta_S$, $S \subseteq [n]$, which implies that $\frac{1}{r}X \in \mathcal{P}^n_1$. By Theorem~\ref{Thm:Lieder}, we know that $Y$, see~\eqref{Def:caligraphicY}, is contained in $\mathcal{SCP}^{n+1}$, implying $X \in \mathcal{C}^n_r$.} 
\end{proof}

\subsection[Theory on PSD $\{\pm 1\}$--matrices]{Theory on PSD $\mathbf{\{\pm 1\}}$--matrices}
In this section we  present several results for PSD matrices that have entries in $\{\pm 1\}$. Let us first state the following well-known result.
\begin{proposition}[\cite{AnjosWolkowicz}] Let $X$  be a symmetric matrix. Then, $X \succeq \mathbf{0}$, $X \in \{\pm 1\}^{n \times n}$ if and only if $X = xx^\top$ for some $x \in \{\pm 1\}^n$. \label{Prop:minusonePSD}
\end{proposition}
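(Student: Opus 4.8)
The statement to prove is Proposition~\ref{Prop:minusonePSD}: a symmetric matrix $X$ satisfies $X \succeq \mathbf{0}$ and $X \in \{\pm 1\}^{n \times n}$ if and only if $X = xx^\top$ for some $x \in \{\pm 1\}^n$. The plan is to reduce the $\{\pm 1\}$ case to the already-established $\{0,1\}$ case (Theorem~\ref{Thm:binPSD}) via a simple congruence transformation, rather than arguing from scratch.

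\textbf{The easy direction.} First I would dispose of the ``if'' direction: if $X = xx^\top$ with $x \in \{\pm 1\}^n$, then $X$ is manifestly symmetric and PSD (it is a rank-one Gram matrix), and each entry $X_{ij} = x_i x_j$ is a product of two elements of $\{\pm 1\}$, hence lies in $\{\pm 1\}$. No work is needed here.

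\textbf{The main direction.} For the ``only if'' direction, suppose $X \in \{\pm 1\}^{n\times n}$ is symmetric and PSD. The diagonal entries are in $\{\pm 1\}$, but positive semidefiniteness forces $X_{ii} \geq 0$, hence $\diag(X) = \mathbf{1}_n$. Now set $D := \Diag(X_{11}, \dots) $—actually the cleaner move is: fix the first row. Let $s \in \{\pm 1\}^n$ be defined by $s_i := X_{1i}$ (so $s_1 = 1$), and consider $\widehat{X} := \tfrac12(\mathbf{J} + \Diag(s)\, X\, \Diag(s))$. Congruence by the signature matrix $\Diag(s)$ preserves both symmetry and positive semidefiniteness, and it normalizes the first row/column to all-ones while keeping entries in $\{\pm 1\}$; then the affine map $Z \mapsto \tfrac12(\mathbf{J}+Z)$ sends $\{\pm 1\}$ entries to $\{0,1\}$ entries and preserves positive semidefiniteness of matrices whose first row is $\mathbf{1}_n^\top$ (since $\tfrac12(\mathbf{J}+Z) = \tfrac12(\mathbf{1}\mathbf{1}^\top + Z)$ and when the first row of $Z$ is all ones one checks this is a Schur-type combination that stays PSD). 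In fact the slickest route is: $\tfrac12(\mathbf{J} + \Diag(s)X\Diag(s)) \succeq \mathbf{0}$ because both summands are PSD, it has $\{0,1\}$ entries, so by Theorem~\ref{Thm:binPSD} it decomposes as $\sum_{j=1}^r y_j y_j^\top$ with $y_j \in \{0,1\}^n$; but its first diagonal entry equals $1$ and its $(1,i)$ entry equals $1$ for every $i$ with $s_i s_1 X_{1i} \cdot$... — one verifies the first row is all ones, forcing all these cliques to contain index $1$, hence $r=1$, i.e. $\tfrac12(\mathbf{J}+\Diag(s)X\Diag(s)) = yy^\top$. Unwinding gives $\Diag(s)X\Diag(s) = (2y-\mathbf{1})(2y-\mathbf{1})^\top$, so $X = \bigl(\Diag(s)(2y-\mathbf{1})\bigr)\bigl(\Diag(s)(2y-\mathbf{1})\bigr)^\top = xx^\top$ with $x := \Diag(s)(2y-\mathbf{1}) \in \{\pm 1\}^n$, as desired.

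\textbf{Main obstacle.} The only subtle point is verifying that the first row of $\tfrac12(\mathbf{J}+\Diag(s)X\Diag(s))$ is indeed $\mathbf{1}_n^\top$: the $(1,i)$ entry is $\tfrac12(1 + s_1 s_i X_{1i}) = \tfrac12(1 + X_{1i}^2) = 1$ since $X_{1i} = s_i \in \{\pm1\}$. Once this is in hand, the clique-packing interpretation of Theorem~\ref{Thm:binPSD} (cliques must be disjoint, and every vertex $i$ with diagonal entry $1$ lies in the clique of vertex $1$) forces a single clique exactly as in the proof of Theorem~\ref{Thm:binPSDrank}. I expect this to be a short argument; the ``hard part,'' such as it is, is merely choosing the right congruence so that the reduction to $\{0,1\}$ is transparent. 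Note this also reproves the folklore fact that PSD $\{\pm1\}$ matrices have rank exactly one.
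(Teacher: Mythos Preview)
The paper does not supply a proof of this proposition; it is quoted from \cite{AnjosWolkowicz} as a well-known result and stated without argument. So there is no paper proof to compare against, and the question is simply whether your argument is sound.

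Your reduction to Theorem~\ref{Thm:binPSD} is correct, with one wrinkle worth cleaning up. In the unwinding step you write $\Diag(s)X\Diag(s) = (2y-\mathbf{1})(2y-\mathbf{1})^\top$, but the identity $2yy^\top - \mathbf{J} = (2y-\mathbf{1})(2y-\mathbf{1})^\top$ is \emph{false} for general $y \in \{0,1\}^n$ (expand the right-hand side). It happens to hold here only because $y = \mathbf{1}_n$: since $\diag(\widehat{X}) = \mathbf{1}_n$ and $\widehat{X} = yy^\top$ with $y$ binary, each $y_i^2 = 1$ forces $y_i = 1$. Once you note $y = \mathbf{1}_n$, the unwinding is simply $\Diag(s)X\Diag(s) = \mathbf{J}$, hence $X = ss^\top$ directly, and the vector $2y - \mathbf{1}$ need never appear.

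It is also worth remarking that invoking Theorem~\ref{Thm:binPSD} is heavier than the problem requires. A two-line direct argument: from $X \succeq \mathbf{0}$ and $\diag(X) = \mathbf{1}_n$, the determinant of the $3\times 3$ principal submatrix on indices $\{1,i,j\}$ equals $2(X_{1i}X_{1j}X_{ij} - 1) \geq 0$, forcing $X_{1i}X_{1j}X_{ij} = 1$ and hence $X_{ij} = X_{1i}X_{1j}$; setting $x_i := X_{1i}$ gives $X = xx^\top$ immediately.
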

A simple necessary condition for $X \in \{\pm 1\}^{n \times n}$ to be PSD is that $\diag(X) = \mathbf{1}$. {\color{black} The next result}  establishes  the  equivalence between $\{0,1\}$-- and $\{\pm 1\}$--PSD matrices by exploiting their rank.
\begin{proposition} Let $X \in \{\pm 1\}^{n \times n}$ be {\color{black}a symmetric} matrix and  $Y := \frac{1}{2}(X + \bold{J}) \in \{0,1\}^{n \times n}$. Then, $X \succeq \mathbf{0}$ if and only if  $\diag(Y) = \mathbf{1}$, $Y \succeq \mathbf{0}$ and $\rank(Y) \leq 2$. \label{Prop:binPSDequi}
\end{proposition}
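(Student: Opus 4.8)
The plan is to relate the $\{\pm1\}$--matrix $X$ to the $\{0,1\}$--matrix $Y = \frac12(X+\mathbf{J})$ via a rank-one change that corresponds to padding with an all-ones row and column, so that Proposition~\ref{Prop:minusonePSD} on the $\{\pm1\}$ side can be traded for Theorem~\ref{Thm:binPSD} and the rank characterizations on the $\{0,1\}$ side. Concretely, first I would note the algebraic identity
\begin{align*}
    \begin{pmatrix} 1 & \mathbf{1}_n^\top \\ \mathbf{1}_n & Y \end{pmatrix} = \frac{1}{2}\begin{pmatrix} 1 & \mathbf{1}_n^\top \\ \mathbf{1}_n & X \end{pmatrix} + \frac{1}{2}\begin{pmatrix} 1 \\ \mathbf{1}_n \end{pmatrix}\begin{pmatrix} 1 \\ \mathbf{1}_n \end{pmatrix}^\top,
\end{align*}
which holds precisely because $Y = \frac12(X + \mathbf{1}_n\mathbf{1}_n^\top)$. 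Call the left-hand matrix $\widehat{Y}$ and the first matrix on the right $\widehat{X}$ (scaled). This exhibits $\widehat{Y}$ as a sum of $\tfrac12\widehat{X}$ and a rank-one PSD matrix.

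For the forward direction, suppose $X \succeq \mathbf{0}$ with $X \in \{\pm1\}^{n\times n}$. By Proposition~\ref{Prop:minusonePSD}, $X = xx^\top$ for some $x \in \{\pm1\}^n$; in particular $\diag(X) = \mathbf{1}_n$, hence $\diag(Y) = \frac12(\mathbf{1}_n + \mathbf{1}_n) = \mathbf{1}_n$. Then $\widehat{X} = \begin{psmallmatrix}1 & x^\top \\ x & xx^\top\end{psmallmatrix} = \begin{psmallmatrix}1\\x\end{psmallmatrix}\begin{psmallmatrix}1\\x\end{psmallmatrix}^\top \succeq \mathbf{0}$, so $\widehat{Y}$ is a sum of two PSD matrices and therefore $\widehat{Y} \succeq \mathbf{0}$ with $\rank(\widehat{Y}) \le 2$. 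Since $Y$ is a principal submatrix of $\widehat{Y}$, we get $Y \succeq \mathbf{0}$, and $\rank(Y) \le \rank(\widehat{Y}) \le 2$.

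For the converse, assume $\diag(Y) = \mathbf{1}_n$, $Y \succeq \mathbf{0}$, and $\rank(Y) \le 2$. I would apply Proposition~\ref{Prop:binaryProperties} (statement (i) $\Leftrightarrow$ (ii), with $r \le 2$, allowing for the degenerate cases $r=1$): there is a permutation matrix $Q$ with $QYQ^\top = \mathbf{J}_{n_1} \oplus \mathbf{J}_{n_2}$ (or just $\mathbf{J}_n$ if $\rank(Y)=1$). Then $QXQ^\top = 2\,QYQ^\top - \mathbf{J}_n$, and a direct block computation shows this equals $zz^\top$ where $z$ has $+1$ in the first $n_1$ coordinates and $-1$ in the last $n_2$; hence $X = (Q^\top z)(Q^\top z)^\top \succeq \mathbf{0}$ with $Q^\top z \in \{\pm1\}^n$. (If $\rank(Y) = 0$ then $Y = \mathbf{0}$, contradicting $\diag(Y) = \mathbf{1}_n$, so this case does not arise.) The main thing to be careful about is the bookkeeping in the converse: one must check that the diagonal hypothesis forces $Y$ to have ones on the diagonal so that Proposition~\ref{Prop:binaryProperties} applies verbatim, and that the two-block $\mathbf{J}_{n_1}\oplus\mathbf{J}_{n_2}$ structure really does transform into a rank-one $\{\pm1\}$ matrix under $X = 2Y - \mathbf{J}$ — this is where the rank bound $\le 2$ is used essentially, since three or more blocks would yield a $\{\pm1\}$ matrix that is not rank one and not PSD.
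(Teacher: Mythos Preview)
Your converse direction is correct and essentially the same as the paper's: the paper uses Theorem~\ref{Thm:binPSD} to write $Y = x_1x_1^\top + x_2x_2^\top$ (with $x_1+x_2=\mathbf{1}_n$ forced by $\diag(Y)=\mathbf{1}_n$) and then computes $X = 2Y - \mathbf{J} = (x_1-x_2)(x_1-x_2)^\top$, which is exactly your block computation phrased in the decomposition language rather than via Proposition~\ref{Prop:binaryProperties}.

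However, your forward direction contains a genuine error. You define $\widehat{X} = \begin{psmallmatrix}1 & \mathbf{1}_n^\top \\ \mathbf{1}_n & X\end{psmallmatrix}$ and then assert $\widehat{X} = \begin{psmallmatrix}1\\x\end{psmallmatrix}\begin{psmallmatrix}1\\x\end{psmallmatrix}^\top$. But the first row of $\widehat{X}$ is $(1,\mathbf{1}_n^\top)$, not $(1,x^\top)$, so this equality fails whenever $x \neq \mathbf{1}_n$. In fact $\widehat{X}$ is generally \emph{not} PSD: for $x = (1,-1)^\top$ one gets $\det(\widehat{X}) = -4$. Consequently your representation of $\widehat{Y}$ as a sum of two PSD matrices collapses.

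The padding is unnecessary here and is what introduces the mistake. The paper's forward argument is simply: $X \succeq \mathbf{0}$ and $\mathbf{J} \succeq \mathbf{0}$ give $Y = \tfrac12(X+\mathbf{J}) \succeq \mathbf{0}$; $\diag(X) = \mathbf{1}_n$ (from Proposition~\ref{Prop:minusonePSD}) gives $\diag(Y) = \mathbf{1}_n$; and $Y = \tfrac12(xx^\top + \mathbf{1}_n\mathbf{1}_n^\top)$ is a sum of two rank-one matrices, hence $\rank(Y) \le 2$. Dropping the lift to $\widehat{Y}$ and arguing directly on $Y$ fixes your proof immediately.
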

\begin{proof}
$(\Longrightarrow )$: Let $X \succeq \mathbf{0}$. Since $\bold{J} \succeq \mathbf{0}$, it follows that $Y \succeq \mathbf{0}$. Moreover, ${\diag(X) = \diag(\bold{J}) = \mathbf{1}}$ implies that $\diag(Y) = \mathbf{1}$. Finally, by Proposition~\ref{Prop:minusonePSD} we know that~${X = xx^\top}$ for some $x \in \{\pm 1\}^n$. Therefore, $Y$ is the weighted sum of two rank one matrices, so $\rank(Y) \leq 2$. 

$(\Longleftarrow )$: 
Let $Y = \frac{1}{2}(X + \bold{J})\succeq \mathbf{0}$, $\diag(Y) = \mathbf{1}$ and $\rank(Y) \leq 2$ for some symmetric matrix~${X \in \{ \pm 1\}^{n \times n}}$.
Since $Y$ is binary positive semidefinite with rank at most two, it follows from Theorem~\ref{Thm:binPSD} that ${Y = x_1x_1^\top + x_2x_2^\top}$ for some $x_1, x_2 \in \{0,1\}^n$. Then,
$X  = 2Y - \bold{J} =  (x_1 - x_2)(x_1 - x_2)^\top,$
%\begin{align*}
%X & = 2Y - \bold{J} = 2(x_1x_1^\top + x_2x_2^\top ) - (x_1 + x_2)(x_1 + x_2)^\top = (x_1 - x_2)(x_1 - x_2)^\top ,
%\end{align*}
which implies that $X \succeq \mathbf{0}$. 
\end{proof} 
Note that the matrix $Y$ from the previous theorem has rank one if and only if $Y=X=\bold{J}.$
Similar to~\eqref{Def:setD}, we define the discrete set of all $\{\pm 1\}$--matrices as 
\begin{align}
    \widehat{\mathcal{D}}^n := \left\{X \in \{\pm 1\}^{n \times n} \, : \, \, X \succeq \bold{0} \right\}, \label{Def:setDplusminus}
\end{align}
where the subscript $r$ is not present anymore, as all matrices in $\widehat{\mathcal{D}}^n$ have rank one. Based on Proposition~\ref{Prop:minusonePSD}, we can easily establish that  $|\widehat{\mathcal{D}}^n| = 2^{n-1}$. 
Next, we summarize  known results on sets related to $\{\pm 1\}$--matrices. The convex hull of all PSD $\{\pm1\}$--matrices is known as the cut polytope:
\begin{align} \label{Def:setPplusminus}
    \widehat{\mathcal{P}}^n := \Conv(\widehat{\mathcal{D}}^n),
\end{align}
see e.g.,~\cite{LaurentPoljakRendl}.
Also, we define the following set-completely positive matrix cone:
\begin{align} \label{Def:SetCompletelyPositivePM}
    \mathcal{SC}\widehat{\mathcal{P}}^{n} & := \Conv \left( \left\{ xx^\top \, : \, \, x\in \mathbb{R}^n,~ x_1 + x_i \geq 0, \, x_1-x_i \geq 0 \text{ for all } i \in \{2, \ldots, n\}\right\} \right). 
\end{align}    
The cone $\mathcal{SC}\widehat{\mathcal{P}}^{n}$ is considered in~\cite{LiederEtAl}, where the authors show that  $\mathcal{SC}\widehat{\mathcal{P}}^{n}$ and $\mathcal{SCP}^{n}$, see~\eqref{Def:SetCompletelyPositive}, are related as follows: ${\mathcal T}( \mathcal{SCP}^{n} ) = \mathcal{SC}\widehat{\mathcal{P}}^{n}$
 and
  ${\mathcal T}^{-1}(\mathcal{SC}\widehat{\mathcal{P}}^{n}) = \mathcal{SCP}^{n}$,
where ${\mathcal T}$  is an appropriate linear mapping. 
Lieder et al.~\cite{LiederEtAl}  consider the following set-completely positive matrix set:
\begin{align}
    \widehat{\mathcal{C}}^n & := \left\{ X \in \mathcal{S}^n \, : \, \, \begin{pmatrix}
        1 & x^\top \\ x& X
    \end{pmatrix} \in \mathcal{SC}\widehat{\mathcal{P}}^{n+1},~\diag(X) = \mathbf{1}_n \right\}, \label{Eq:SCP_rank1pm1}
\end{align}
which is the analogue of the set $\mathcal{C}^n_1$ for $\{0,1\}$--matrices, see~\eqref{Eq:SCP_rank1}.
\begin{theorem}[\cite{LiederEtAl}] \label{Thm:Lieder2}
    We have $\widehat{\mathcal{P}}^n = \widehat{\mathcal{C}}^n$. 
\end{theorem}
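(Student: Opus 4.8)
$\textbf{Proof proposal for Theorem~\ref{Thm:Lieder2}.}$

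The plan is to reduce the $\{\pm 1\}$ statement to the already-established $\{0,1\}$ statement of Theorem~\ref{Thm:Lieder}, using the linear change of variables $Y := \tfrac{1}{2}(X + \mathbf{J})$ together with the transformation $\mathcal{T}$ relating the two set-completely positive cones, i.e., $\mathcal{T}(\mathcal{SCP}^n) = \mathcal{SC}\widehat{\mathcal{P}}^n$ and $\mathcal{T}^{-1}(\mathcal{SC}\widehat{\mathcal{P}}^n) = \mathcal{SCP}^n$. First I would make the map $\mathcal{T}$ explicit: it is the congruence $Z \mapsto T Z T^\top$ where $T$ implements, in homogenized coordinates, the affine substitution $x \mapsto \tfrac{1}{2}(x + \mathbf{1})$; concretely $T$ is lower-triangular with a $1$ in the top-left corner, $\tfrac{1}{2}$ in the remaining diagonal entries, and $\tfrac{1}{2}$ in the first column below the diagonal. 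The key observation is that this same $T$ simultaneously (a) sends the generators $xx^\top$ of $\mathcal{SC}\widehat{\mathcal{P}}^{n+1}$ (with $x_1 \pm x_i \geq 0$) to generators of $\mathcal{SCP}^{n+1}$ (with $z_1 \geq z_i \geq 0$), and (b) sends the lifted matrix $\begin{psmallmatrix} 1 & x^\top \\ x & X \end{psmallmatrix}$ with $\diag(X) = \mathbf{1}_n$ to the lifted matrix $\begin{psmallmatrix} 1 & y^\top \\ y & Y \end{psmallmatrix}$ with $\diag(Y) = y$, where $Y = \tfrac{1}{2}(X+\mathbf{J})$ and $y = \diag(Y)$.

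The second step is to combine these facts. Given a symmetric $X \in \mathcal{S}^n$, set $Y = \tfrac12(X+\mathbf{J})$. Then $X \in \widehat{\mathcal{C}}^n$ iff $\begin{psmallmatrix} 1 & \mathbf{1}_n^\top \\ \mathbf{1}_n & X \end{psmallmatrix} \in \mathcal{SC}\widehat{\mathcal{P}}^{n+1}$ (using $\diag(X) = \mathbf{1}_n$), which by the intertwining property of $\mathcal{T}$ is equivalent to $\begin{psmallmatrix} 1 & \diag(Y)^\top \\ \diag(Y) & Y \end{psmallmatrix} \in \mathcal{SCP}^{n+1}$ with $\diag(Y) = y$, i.e., $Y \in \mathcal{C}^n_1$. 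By Theorem~\ref{Thm:Lieder} this holds iff $Y \in \mathcal{P}^n_1 = \Conv(\mathcal{D}^n_1)$. On the other side, I would show $X \in \widehat{\mathcal{P}}^n$ iff $Y \in \mathcal{P}^n_1$: one direction uses that every vertex $xx^\top$ of $\widehat{\mathcal{P}}^n$ with $x \in \{\pm 1\}^n$ maps under $X \mapsto \tfrac12(X+\mathbf{J})$ to a matrix of the form $s s^\top$ with $s \in \{0,1\}^n$ (indeed $\tfrac12(xx^\top + \mathbf{J}) = \tfrac12((x+\mathbf{1})(x+\mathbf{1})^\top - \mathbf{1}x^\top - x\mathbf{1}^\top + \mathbf{J} + \mathbf{J})$... more cleanly: if $x_i = 1$ let $s_i = 1$ and if $x_i = -1$ let $s_i = 0$, then $X_{ij} = 1 \Leftrightarrow s_i = s_j$, and a short check gives $Y = s s^\top + (\mathbf{1}-s)(\mathbf{1}-s)^\top \in \mathcal{D}^n_1$ since $\rank \le 2$ with ones on the diagonal—wait, this has rank $2$, so I must instead argue directly that the affine map is a bijection between $\widehat{\mathcal{D}}^n$ and the ones-diagonal slice of $\mathcal{D}^n_2$, not $\mathcal{D}^n_1$). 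Let me restate: the correct chain is $\widehat{\mathcal{P}}^n \cong$ the face of $\mathcal{P}^n_2$ cut out by $\diag = \mathbf{1}_n$ under $X \mapsto \tfrac12(X+\mathbf{J})$, by Proposition~\ref{Prop:binPSDequi}, whereas $\widehat{\mathcal{C}}^n$ pulls back to $\mathcal{C}^n_1$ intersected with $\diag = \mathbf{1}_n$; so the cleanest route is to invoke Theorem~\ref{Thm:Lieder} in the form $\mathcal{P}^n_1 = \mathcal{C}^n_1$ directly after translating both $\widehat{\mathcal{P}}^n$ and $\widehat{\mathcal{C}}^n$ through $\mathcal{T}$, since $\mathcal{T}$ carries $\mathcal{SC}\widehat{\mathcal{P}}$-membership exactly to $\mathcal{SCP}$-membership and carries the diagonal constraint $\diag(X) = \mathbf{1}_n$ exactly to $\diag(Y) = y$.

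The main obstacle, and the step deserving the most care, is verifying that the congruence $Z \mapsto T Z T^\top$ really does map the constraint set defining $\widehat{\mathcal{C}}^n$ onto that defining $\mathcal{C}^n_1$ \emph{bijectively} — in particular, that the homogenizing coordinate is preserved (top-left entry stays $1$), that the off-diagonal generator inequalities $x_1 \pm x_i \geq 0$ transform precisely into $0 \leq z_i \leq z_1$, and that $\diag(X) = \mathbf{1}_n$ transforms precisely into $\diag(Y) = \diag(Y)\cdot$(first column), which is the self-referential constraint $\diag(Y) = y$ in the definition of $\mathcal{C}^n_1$. Since $T$ is invertible, $\mathcal{T}$ is a linear isomorphism, so it preserves convex hulls and hence maps the generating set of $\mathcal{SC}\widehat{\mathcal{P}}^{n+1}$ onto that of $\mathcal{SCP}^{n+1}$; once the generator-level correspondence and the diagonal-constraint correspondence are checked, the equality $\widehat{\mathcal{P}}^n = \widehat{\mathcal{C}}^n$ follows immediately from $\mathcal{P}^n_1 = \mathcal{C}^n_1$ by transporting the identity through $\mathcal{T}$. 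I would also note explicitly that this is essentially the content of Lieder et al.~\cite{LiederEtAl}, so the proof can be kept short by citing the transformation result stated just above the theorem and only spelling out the diagonal-constraint bookkeeping.
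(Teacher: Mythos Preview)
The paper does not prove Theorem~\ref{Thm:Lieder2}: it is stated as a citation to Lieder et al.~\cite{LiederEtAl}, followed only by the remark that it is the $\{\pm 1\}$--analogue of Theorem~\ref{Thm:Lieder}. There is therefore no proof in the paper to compare your proposal against.

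That said, your overall strategy---transport both $\widehat{\mathcal{P}}^n$ and $\widehat{\mathcal{C}}^n$ through the linear isomorphism $\mathcal{T}$ and reduce to $\mathcal{P}^n_1 = \mathcal{C}^n_1$---is the natural one and is essentially how the result is obtained in~\cite{LiederEtAl}. However, there is a concrete computational slip that generates the rank-$2$ confusion in your middle paragraph. The congruence $M \mapsto TMT^\top$ with $T = \begin{psmallmatrix} 1 & 0 \\ \tfrac12\mathbf{1} & \tfrac12 \mathbf{I} \end{psmallmatrix}$ does \emph{not} send the bottom-right block $X$ to $\tfrac12(X+\mathbf{J})$. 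A direct calculation gives $y = \tfrac12(\mathbf{1}+x)$ and
\[
Y \;=\; \tfrac14\bigl(X + x\mathbf{1}^\top + \mathbf{1}x^\top + \mathbf{J}\bigr),
\]
which depends on the off-diagonal block $x$ of the lifted matrix, not on $X$ alone. You have conflated this with the map $X \mapsto \tfrac12(X+\mathbf{J})$ from Proposition~\ref{Prop:binPSDequi}, which is a different transformation acting directly on $n\times n$ matrices. Once the correct formula is used, a vertex $X = vv^\top$ of $\widehat{\mathcal{P}}^n$, lifted with $x = v$, is sent to $Y = ss^\top$ with $s = \tfrac12(\mathbf{1}+v) \in \{0,1\}^n$, a genuine element of $\mathcal{D}^n_1$; the rank-$2$ obstacle disappears and no detour through $\mathcal{P}^n_2$ is needed. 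The diagonal bookkeeping you flag in your last paragraph is then immediate: $\diag(X)=\mathbf{1}_n$ gives $\diag(Y) = \tfrac14(\mathbf{1}+2x+\mathbf{1}) = \tfrac12(\mathbf{1}+x) = y$, exactly the constraint defining $\mathcal{C}^n_1$.
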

This theorem is the analogue of~\Cref{Thm:Lieder} that provides a result for $\{0,1\}$--matrices. 
For the equivalence transformation between $\{\pm 1\}$-- and $\{0,1\}$--representations of SDP relaxations of binary quadratic optimization problems,
 we refer the interested reader to Helmberg~\cite{Helmberg1997FixingVI}. 

\subsection[Theory on PSD $\{0,\pm 1\}$--matrices]{Theory on PSD $\mathbf{\{0,\pm 1\}}$--matrices}
In the sequel we generalize several results from 
the previous sections to PSD $\{0,\pm 1\}$--matrices. The following result shows that a PSD $\{0,\pm 1\}$--matrix is block-diagonalizable, which is the analogue of \Cref{Prop:binaryProperties}  for~$\{0,1\}$--matrices.
\begin{proposition}\label{prop:ternary1}
Let $X\in \{0, \pm 1 \}^{n \times n}$ be symmetric.
Then, the following statements are equivalent:
\begin{itemize}
\item[(i)]  $\diag({X})=\mathbf{1}_n$, $\rank({X})=r$, ${X} \succeq \mathbf{0}$.
\item[(ii)] There exists a permutation matrix ${Q}$ such that ${QXQ}^\top=\bold{B}_{n_1} \oplus \, \cdots \, \oplus \bold{B}_{n_r} $,  where~$\bold{B}_{n_i} = b_ib_i^\top$, $b_i \in \{\pm1\}^{n_i}$ for $i\in [r]$,
$n = n_1 + \dots + n_r$.
\end{itemize}
\end{proposition}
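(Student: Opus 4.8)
The plan is to mimic the proof structure of Proposition~\ref{Prop:binaryProperties}, reducing the ternary case to the already-established $\{\pm 1\}$--case. The direction $(ii) \Longrightarrow (i)$ is immediate: if $QXQ^\top = \bold{B}_{n_1} \oplus \cdots \oplus \bold{B}_{n_r}$ with each $\bold{B}_{n_i} = b_ib_i^\top$ a rank-one PSD $\{\pm 1\}$--matrix, then the block-diagonal matrix is PSD with rank exactly $r$ (each block contributes rank one and they are supported on disjoint coordinate sets), and its diagonal is $\mathbf{1}_n$; conjugating back by $Q^\top$ preserves all three properties.

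For the harder direction $(i) \Longrightarrow (ii)$, first I would argue that $X \succeq \mathbf{0}$ together with $\diag(X) = \mathbf{1}_n$ and entries in $\{0, \pm 1\}$ forces a block structure on the support graph. Define a graph $G$ on $[n]$ with $i \sim j$ iff $X_{ij} \neq 0$; I would show each connected component of $G$ is a clique and that $X$ restricted to a component equals $bb^\top$ for some $\{\pm 1\}$--vector $b$. The key local fact: for any $2\times 2$ principal submatrix $\begin{psmallmatrix} 1 & X_{ij} \\ X_{ij} & 1 \end{psmallmatrix} \succeq \mathbf{0}$ nothing new is forced, but for a $3\times 3$ principal submatrix with entries in $\{0,\pm 1\}$ and unit diagonal, PSD-ness rules out the configuration where exactly one off-diagonal entry is $0$ and the other two are nonzero (the determinant becomes $-X_{jk}^2 < 0$ when $X_{ij}, X_{ik} \neq 0$, $X_{jk} = 0$), and when all three are nonzero it forces $X_{ij}X_{ik}X_{jk} = 1$, i.e.\ the signs are consistent. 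This transitivity of ``$i \sim j$'' within a component, plus sign-consistency, lets me pick a reference vertex in each component and define $b$ by $b_i := X_{i, \text{ref}}$, then verify $X_{ij} = b_i b_j$ on that component. The permutation $Q$ simply groups the components together.

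The main obstacle I expect is making the sign-consistency / clique argument fully rigorous across a whole component rather than just triples — essentially one needs that a connected graph in which every triangle is ``closed'' (no induced $P_3$, using the $3\times 3$ determinant argument) is complete, and that a locally consistent sign assignment on a connected, triangle-dense graph extends globally. This is a standard cocycle-type argument but must be phrased carefully. An alternative, cleaner route avoiding graph theory: apply the spectral decomposition implied by rank $r$, namely $X = \sum_{i=1}^r \lambda_i v_iv_i^\top$; since the diagonal is all ones and entries are ternary, and then invoke the structure of idempotent-like behavior — but this seems messier than the combinatorial argument, so I would stick with the support-graph approach.

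A streamlined alternative, which I actually prefer, is to reduce directly to Theorem~\ref{Thm:binPSD} and Proposition~\ref{prop:ternary1}'s $\{0,1\}$--analogue: given $(i)$, consider $|X|$ (entrywise absolute value). One shows $|X|$ is a PSD $\{0,1\}$--matrix with $\diag(|X|) = \mathbf{1}_n$, so by Proposition~\ref{Prop:binaryProperties} there is a permutation $Q$ with $Q|X|Q^\top = \mathbf{J}_{n_1} \oplus \cdots \oplus \mathbf{J}_{n_s}$; then $QXQ^\top$ is block-diagonal with the same block pattern, each block $\bold{B}_{n_i}$ being a symmetric $\{\pm 1\}$--matrix (since every off-diagonal entry is nonzero there) that is a principal submatrix of a PSD matrix, hence PSD, hence by Proposition~\ref{Prop:minusonePSD} equal to $b_ib_i^\top$ for some $b_i \in \{\pm 1\}^{n_i}$. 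Counting ranks gives $r = s$. The one technical point requiring care here is verifying that $|X| \succeq \mathbf{0}$; this does not hold for arbitrary PSD matrices, so I would derive it from the triangle/determinant constraints above showing $X$ is, up to conjugation by a $\{\pm 1\}$--diagonal signature matrix $D$ within each component, equal to $|X|$, i.e.\ $X = D|X|D$ blockwise, whence $|X| = DXD \succeq \mathbf{0}$.
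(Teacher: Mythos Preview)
Your main approach is essentially identical to the paper's proof: both argue via $3\times 3$ principal minors that if $X_{ij},X_{ik}\neq 0$ then $X_{jk}\neq 0$, conclude that the support of each row induces a $\{\pm 1\}$--principal submatrix, and then invoke Proposition~\ref{Prop:minusonePSD} to write each such block as $b_ib_i^\top$. One small slip: when $X_{ij},X_{ik}\in\{\pm 1\}$ and $X_{jk}=0$, the $3\times 3$ determinant equals $1-X_{ij}^2-X_{ik}^2=-1$, not ``$-X_{jk}^2$'' as you wrote; the conclusion is unaffected. Your alternative route through $|X|$ is a mild repackaging but, as you correctly observe, still requires the same local determinant argument to produce the signature matrix $D$, so it does not really shortcut the proof.
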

\begin{proof}
Suppose that ${QXQ}^\top$ is in the block form given in $(ii)$, then it trivially satisfies the conditions given in  $(i)$.
Conversely, let $X \in \{0, \pm 1 \}^{n \times n}$ satisfy $(i)$. Let us consider the $i$th row in $X$. Suppose~$j$ and $k$ are two distinct indices not equal to $i$ in the support of this row, i.e., $X_{ij}, X_{ik} \neq 0$. For the sake of contradiction, suppose that $X_{jk} = 0$. Then, the submatrix of $X$ {\color{black}induced} by $i$, $j$ and $k$ is either one of the following matrices: 
\begin{align*}
    \bordermatrix{
     & i & j & k \cr
i & 1 & 1 & 1 \cr
j & 1 & 1 & 0 \cr
k & 1 & 0 & 1
}, \quad \bordermatrix{
     & i & j & k \cr
i & 1 & -1 & -1 \cr
j & -1 & 1 & 0 \cr
k & -1 & 0 & 1
},  \quad \bordermatrix{
     & i & j & k \cr
i & 1 & 1 & -1 \cr
j & 1 & 1 & 0 \cr
k & -1 & 0 & 1
}, \quad \text{or} \quad \bordermatrix{
     & i & j & k \cr
i & 1 & -1 & 1 \cr
j & -1 & 1 & 0 \cr
k & 1 & 0 & 1
}. 
\end{align*}
One easily checks that the determinants of these matrices are all negative, contradicting that~$X \succeq \bold{0}$. Hence,~$X_{jk} \neq 0$. This argument can be repeated to conclude that the submatrix of~$X$ indexed by the support of row $i$ has entries in $\{\pm 1\}$. Since {\color{black}the submatrix of~$X$} is also positive semidefinite, it follows from Proposition~\ref{Prop:minusonePSD} that the submatrix is of the form $bb^\top$ with $b \in \{\pm 1\}^{n_i}$ for some positive integer $n_i$. 

By the same argument, it follows that the other indices in the submatrix induced by row~$i$ have the same support as row $i$. Indeed, if this would not be the case, one of the four matrices above should be a submatrix of $X$. We conclude that $X$ can be fully constructed from nonoverlapping submatrices of the form $bb^\top$ with $b \in \{\pm 1\}^{n_i}$ for some positive integer~$n_i$. Since its rank equals $r$, there must be $r$ of those submatrices. From here the claim follows.
\end{proof}

\Cref{prop:ternary1} extends easily to the following result.
\begin{corollary}\label{cor:pm1zero}
Let $X\in \{0, \pm 1 \}^{n \times n}$ be symmetric.
Then, the following statements are equivalent:
\begin{itemize}
\item[(i)]   $\rank({X})=r$, ${X} \succeq \mathbf{0}$.
\item[(ii)] There exists a permutation matrix ${Q}$ such that
${QXQ}^\top=\bold{B}_{n_1} \oplus \, \cdots \, \oplus \bold{B}_{n_r}\oplus {\mathbf 0}_{n_{z}\times n_z} $  where $\bold{B}_{n_i} = b_ib_i^\top$, $b_i\in \{\pm1\}^{n_i}$ for $i\in [r]$,
$n = n_1 + \dots + n_r + n_{z}$.
\end{itemize}
\end{corollary}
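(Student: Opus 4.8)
The plan is to imitate the proof of Corollary~\ref{Cor:binaryProperties} from the $\{0,1\}$ setting, simply replacing the appeal to Proposition~\ref{Prop:binaryProperties} by an appeal to Proposition~\ref{prop:ternary1}. The implication $(ii)\Rightarrow(i)$ is immediate: a block-diagonal matrix whose blocks are $r$ rank-one PSD matrices $b_ib_i^\top$ together with a zero block is positive semidefinite of rank exactly $r$, and conjugating by a permutation matrix preserves both properties, so $X\succeq\mathbf{0}$ with $\rank(X)=r$.

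For $(i)\Rightarrow(ii)$, I would first record that a PSD matrix with entries in $\{0,\pm1\}$ has diagonal entries in $\{0,1\}$. Partition $[n]$ into $N_1:=\{i:X_{ii}=1\}$ and $N_0:=\{i:X_{ii}=0\}$. The key (and essentially only nontrivial) observation is that for $i\in N_0$ and any $j\neq i$, positive semidefiniteness of the $2\times2$ principal submatrix on $\{i,j\}$ gives $-X_{ij}^2\ge 0$, hence $X_{ij}=0$; thus all rows and columns indexed by $N_0$ vanish. Choosing a permutation matrix $Q'$ that moves the indices of $N_1$ to the first $|N_1|$ coordinates then yields $Q'X(Q')^\top = Y\oplus\mathbf{0}_{|N_0|\times|N_0|}$, where $Y$ is the principal submatrix of $X$ on $N_1$; it satisfies $\diag(Y)=\mathbf{1}_{|N_1|}$, $Y\in\{0,\pm1\}^{|N_1|\times|N_1|}$, $Y\succeq\mathbf{0}$ and $\rank(Y)=\rank(X)=r$.

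Applying Proposition~\ref{prop:ternary1} to $Y$ produces a permutation matrix $\bar Q$ with $\bar Q Y\bar Q^\top=\mathbf{B}_{n_1}\oplus\cdots\oplus\mathbf{B}_{n_r}$, where $\mathbf{B}_{n_i}=b_ib_i^\top$, $b_i\in\{\pm1\}^{n_i}$, and $|N_1|=n_1+\cdots+n_r$. Setting $n_z:=|N_0|$ and $Q:=(\bar Q\oplus\mathbf{I}_{n_z})Q'$ then gives $QXQ^\top=\mathbf{B}_{n_1}\oplus\cdots\oplus\mathbf{B}_{n_r}\oplus\mathbf{0}_{n_z\times n_z}$ with $n=n_1+\cdots+n_r+n_z$, which is exactly statement $(ii)$.

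I do not anticipate a genuine obstacle: the argument is the $\{0,\pm1\}$-analogue of the reduction already carried out for $\{0,1\}$-matrices, and the single step that needs a moment of care is the elementary fact that a zero diagonal entry of a PSD matrix forces the whole corresponding row and column to vanish — this is precisely what lets us detach the $\mathbf{0}_{n_z\times n_z}$ block and reduce to Proposition~\ref{prop:ternary1}.
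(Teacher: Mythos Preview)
Your proposal is correct and follows exactly the approach the paper intends: the paper's proof simply states that it is similar to the proof of Corollary~\ref{Cor:binaryProperties}, and you have spelled out precisely that argument, replacing the appeal to Proposition~\ref{Prop:binaryProperties} by Proposition~\ref{prop:ternary1} and handling the zero-diagonal indices via the $2\times 2$ principal submatrix observation.
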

\begin{proof}
{\color{black}The proof is similar to the proof of~\Cref{Cor:binaryProperties}.}
\end{proof}
Let $X \in \{0, \pm 1 \}^{n \times n}$ be given as in Corollary~\ref{cor:pm1zero}, then
\begin{align*}
QXQ^\top  = \bold{B}_{n_1} \oplus \, \cdots \, \oplus \bold{B}_{n_r}\oplus {\mathbf 0}_{n_{z}\times n_z}  
= b_1b_1^\top  \oplus \, \cdots \, \oplus   b_rb_r^\top \oplus {\mathbf 0}_{n_{z}\times n_z} 
= \sum_{i=1}^r \bar{x}_i \bar{x}_i^\top,
\end{align*}
where $\bar{x}^\top_1 = [b_1^\top~ {\mathbf 0}_{n-n_1}^\top]$,    $\bar{x}^\top_2 = [{\mathbf 0}_{n_{1}}^\top~ b_2^\top~ {\mathbf 0}_{n-n_1-n_2}^\top]$, and so on.
Let $x_i := Q \bar{x}_i$ for  $i\in [r]$, then 
$X = \sum_{i=1}^r {x}_i {x}_i^\top$,
where $x_i \in \{0,\pm 1 \}^n.$ 
This construction yields the following decomposition of PSD $\{0,\pm 1\}$--matrices.
\begin{theorem}
Let ${X} \in \{0,\pm 1\}^{n \times n}$ be symmetric. 
Then, ${X} \succeq \mathbf{0}$ if and only if~${X} = \sum_{j = 1}^r x_j x_j^\top$ for some~${x_j} \in \{0,\pm 1\}^n$, $j \in [r]$. \label{prop:pm1PSD}
\end{theorem}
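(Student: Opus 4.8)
The plan is to prove the two directions separately, with the interesting direction being that PSD implies the decomposition $X = \sum_{j=1}^r x_jx_j^\top$ with $x_j \in \{0,\pm 1\}^n$. For the easy direction ($\Longleftarrow$), if $X = \sum_{j=1}^r x_jx_j^\top$ then $X$ is a sum of PSD matrices, hence PSD; integrality of the entries is immediate since each $x_j$ has entries in $\{0,\pm 1\}$ and a product of two such entries again lies in $\{0,\pm 1\}$, so every entry of $X$ is a sum of $r$ such values — wait, that is not automatically in $\{0,\pm 1\}$, but that is fine: the statement only claims $X \in \{0,\pm1\}^{n\times n}$ as a hypothesis on the left-hand side, so in the $\Longleftarrow$ direction I only need to observe that $X \succeq \mathbf{0}$, which is trivial.

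For the forward direction ($\Longrightarrow$), I would simply invoke Corollary~\ref{cor:pm1zero} together with the explicit construction carried out in the paragraph immediately preceding the theorem statement. Concretely: assume $X \in \{0,\pm 1\}^{n\times n}$ is symmetric and PSD, and let $r := \rank(X)$. By Corollary~\ref{cor:pm1zero}, there is a permutation matrix $Q$ with $QXQ^\top = \bold{B}_{n_1} \oplus \cdots \oplus \bold{B}_{n_r} \oplus \mathbf{0}_{n_z\times n_z}$, where $\bold{B}_{n_i} = b_ib_i^\top$ with $b_i \in \{\pm 1\}^{n_i}$. Then I would pad each $b_i$ with zeros to form $\bar{x}_i \in \{0,\pm1\}^n$ supported on the appropriate block, so that $QXQ^\top = \sum_{i=1}^r \bar{x}_i\bar{x}_i^\top$, and set $x_i := Q\bar{x}_i \in \{0,\pm1\}^n$ (a permutation of entries preserves membership in $\{0,\pm1\}^n$). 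Then $X = Q^\top(QXQ^\top)Q = \sum_{i=1}^r (Q^\top\bar x_i)(Q^\top\bar x_i)^\top = \sum_{i=1}^r x_ix_i^\top$, using $Q^{-1} = Q^\top$. This is exactly the decomposition required.

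Since the construction has essentially already been displayed in the excerpt right before the theorem, the proof is really just a pointer: the proof should read "This follows immediately from Corollary~\ref{cor:pm1zero} and the construction preceding the theorem for the forward direction; the reverse direction is trivial since $X$ is then a sum of rank-one PSD matrices." I do not anticipate a genuine obstacle here — the only thing to be a little careful about is that the number of blocks in the decomposition is exactly $r = \rank(X)$ (guaranteed by Corollary~\ref{cor:pm1zero}(ii), since the $\bold{B}_{n_i}$ are each rank one and the direct-sum structure makes the total rank exactly $r$), so the claimed decomposition uses exactly $r$ terms rather than merely "some" number; but the theorem statement only says "for some $r$," so even this mild point is not strictly needed. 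If one wanted a self-contained argument avoiding Corollary~\ref{cor:pm1zero}, one could instead cite Proposition~\ref{prop:ternary1} after first zeroing out the rows and columns indexed by $\{i : X_{ii} = 0\}$ (which must vanish entirely by positive semidefiniteness of the $2\times 2$ principal minors), but routing through Corollary~\ref{cor:pm1zero} is cleaner.
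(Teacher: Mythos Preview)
Your proposal is correct and follows exactly the paper's approach: the forward direction is precisely the construction displayed in the paragraph immediately preceding the theorem (via Corollary~\ref{cor:pm1zero}), and the reverse direction is trivial. One cosmetic slip: you define $x_i := Q\bar{x}_i$ but your own computation correctly produces $x_i = Q^\top\bar{x}_i$; the paper itself carries the same typo, and it is inconsequential since a permutation of coordinates preserves membership in $\{0,\pm 1\}^n$ either way.
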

The previous result is 
an extension of~\Cref{Thm:binPSD} to $\{0,\pm 1\}$--matrices.
We now consider an equivalence between a PSD $\{0,\pm 1\}$--matrix of rank one and an extended linear matrix inequality, i.e., the analogue of Theorem~\ref{Thm:binPSDrank}.
\begin{proposition}
\label{prop:pm1b}
Let $Y = \begin{pmatrix} 1 &  x^\top  \\ {x} & {X}\end{pmatrix} \in {\mathcal S}^{n+1}$ {\color{black} with} $\textup{supp}(\diag(X))= \textup{supp}(x)$. Then,
 $Y \in \{0,\pm 1\}^{(n+1)\times (n+1)}$, $Y \succeq {\mathbf 0}$ if and only if $X=xx^\top.$
\end{proposition}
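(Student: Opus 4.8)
The plan is to argue both directions, using the analogous result for $\{0,1\}$--matrices (Theorem~\ref{Thm:binPSDrank}) and the basic structure of PSD $\{0,\pm 1\}$--matrices (Proposition~\ref{Prop:minusonePSD} and Theorem~\ref{prop:pm1PSD}) as the main tools. For the direction $Y = xx^\top \Longrightarrow (Y \text{ integer and PSD})$: positive semidefiniteness is immediate since $Y$ is a rank-one outer product. For integrality, note $Y_{11} = 1$, and the $2\times 2$ principal submatrices $\begin{psmallmatrix} 1 & x_i \\ x_i & X_{ii}\end{psmallmatrix}$ of $Y$ must be PSD, forcing $X_{ii} = x_i^2 \ge x_i^2$, but more usefully we also use $\diag(X) = x$ on the support; actually since $\supp(\diag X) = \supp(x)$ and $X = xx^\top$ gives $X_{ii} = x_i^2$, the condition $\diag(X) = x$ is \emph{not} assumed here, so I must instead derive integrality directly. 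The cleanest route: from $X = xx^\top$ and $Y \succeq \mathbf{0}$, the $2\times 2$ minors give $X_{ii} = x_i^2 \le 1$ (comparing with the $Y_{11}=1$ entry via $\det \begin{psmallmatrix} 1 & x_i \\ x_i & x_i^2 \end{psmallmatrix} = 0$ automatically, so I need the support hypothesis). Here the support hypothesis $\supp(\diag X) = \supp(x)$ combined with $X_{ii} = x_i^2$ forces: $x_i \ne 0 \iff X_{ii} = x_i^2 \ne 0$, which is automatic — so I actually need a different lever. I would instead invoke the $\{0,1\}$--case after the shift $Z := \tfrac12(Y + \mathbf{J}_{n+1})$ fails because $Y$ is not $\{\pm1\}$. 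Let me reconsider: the honest approach is the $(\Longleftarrow)$ direction first.

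For $(\Longleftarrow)$, suppose $Y \in \{0,\pm 1\}^{(n+1)\times(n+1)}$ is symmetric, PSD, with $\supp(\diag X) = \supp(x)$ and $Y_{11} = 1$. By Corollary~\ref{cor:pm1zero} (applied to $Y$), up to simultaneous permutation $Y = b_1 b_1^\top \oplus \cdots \oplus b_s b_s^\top \oplus \mathbf{0}$ with $b_i \in \{\pm1\}^{n_i}$. The index $1$ lies in exactly one block, say the one with vector $b_1$, and since $Y_{11} = 1$ that index is not in the zero block. I claim $s = 1$ and there is no zero block, i.e. $\rank(Y) = 1$: indeed, every index $i \in [n]$ with $x_i \ne 0$ has $Y_{1,i+1} = x_i \ne 0$, so $i+1$ lies in the same block as $1$; and every index $i+1$ with $X_{ii} \ne 0$ has $x_i \ne 0$ by the support hypothesis, hence also lies in block $1$; finally any index $i+1$ with $X_{ii} = 0$ has, by PSD-ness of the $2\times 2$ submatrix with row $i+1$ and any other index $j+1$ in its block, $Y_{i+1,j+1}^2 \le Y_{i+1,i+1} Y_{j+1,j+1} = 0$, so its whole row vanishes and it sits in the zero block — but a rank-one block $b b^\top$ with $b \in \{\pm1\}^{n_i}$ has no zero diagonal entry, so such an index cannot be in a nonzero block. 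Thus $Y$ has a single rank-one block (containing index $1$ and all indices $i+1$ with $X_{ii} \ne 0$) plus a zero block, and collapsing gives $Y = \bar b \bar b^\top$ for a suitable $\bar b \in \{0,\pm1\}^{n+1}$ with $\bar b_1 = 1$; writing $\bar b = (1, x^\top)^\top$ yields $X = xx^\top$.

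For the converse $(\Longrightarrow)$, assume $X = xx^\top$. Then $Y = \begin{psmallmatrix} 1 & x^\top \\ x & xx^\top \end{psmallmatrix} = \begin{psmallmatrix} 1 \\ x \end{psmallmatrix}\begin{psmallmatrix} 1 \\ x \end{psmallmatrix}^\top \succeq \mathbf{0}$, so it only remains to show $Y$ is $\{0,\pm1\}$--valued, i.e. $x \in \{0,\pm1\}^n$. From $X_{ii} = x_i^2$ and the support hypothesis $\supp(\diag X) = \supp(x)$ — which is vacuous here since $x_i^2 = 0 \iff x_i = 0$ — I get no constraint, so the remaining leverage must be that $Y$ itself is \emph{assumed} to have entries in $\{0,\pm1\}$ only in the ``if'' part; re-reading the statement, the $\{0,\pm1\}$--membership appears on the same side as PSD-ness in the biconditional, so in the $(\Longrightarrow)$ direction I may assume $Y$ integer. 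Then $x \in \{0,\pm1\}^n$ is automatic from $Y$ integer, $X = xx^\top$ forces nothing extra, and we are done. \textbf{The main obstacle} is bookkeeping the role of the support hypothesis: it is exactly what rules out a PSD $\{0,\pm1\}$--matrix like $\begin{psmallmatrix} 1 & 0 \\ 0 & 1\end{psmallmatrix}$ (which is integer and PSD but not rank one), so the proof of $(\Longleftarrow)$ must use it crucially to force all nonzero-diagonal indices into the block of index $1$; I expect the delicate step to be the careful case analysis showing no second nonzero block and no stray zero-diagonal index can occur, which is where Corollary~\ref{cor:pm1zero} and the $2\times2$-minor argument combine.
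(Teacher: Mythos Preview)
Your argument for the substantive direction (from $Y \in \{0,\pm1\}^{(n+1)\times(n+1)}$ and $Y \succeq \mathbf{0}$ to $X = xx^\top$) is correct but takes a heavier route than the paper. You invoke the block decomposition of PSD $\{0,\pm1\}$--matrices (Corollary~\ref{cor:pm1zero}) and argue that the support hypothesis forces every nonzero-diagonal index into the block containing index~$1$, whence $Y$ has rank one. The paper dispatches the same direction in two lines via the Schur complement: since $Y_{11} = 1$ and $Y \succeq \mathbf{0}$, one has $X - xx^\top \succeq \mathbf{0}$; as $X_{ii} \in \{0,1\}$ (diagonal of a PSD $\{0,\pm1\}$--matrix) and $x_i^2 \in \{0,1\}$, the support hypothesis $\supp(\diag X) = \supp(x)$ gives $X_{ii} = x_i^2$ for every $i$, so $\diag(X - xx^\top) = \mathbf{0}$, and a PSD matrix with zero diagonal vanishes. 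Your approach yields an explicit structural picture of $Y$, while the paper's Schur-complement argument is shorter and avoids the decomposition machinery entirely.

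Your confusion on the converse direction is warranted: taken literally, $X = xx^\top$ together with the support hypothesis does \emph{not} force $Y$ to be $\{0,\pm1\}$--valued (e.g.\ $x = (\tfrac12)$, $X = (\tfrac14)$). The paper simply calls this direction ``trivial''; the intended reading is that once $x \in \{0,\pm1\}^n$ (which is the only case used downstream, cf.\ Example~\ref{Ex:SILS}), both $Y \succeq \mathbf{0}$ and $Y \in \{0,\pm1\}^{(n+1)\times(n+1)}$ are immediate from $Y = \begin{psmallmatrix}1 \\ x\end{psmallmatrix}\begin{psmallmatrix}1 \\ x\end{psmallmatrix}^\top$. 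Your eventual parsing that integrality sits on the left of the biconditional is right for the forward direction but does not rescue the backward one; there is nothing to fix in your argument here beyond noting the statement is slightly loose.
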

\begin{proof}
Let $Y_{ij} \in \{0, \pm 1\}$ for all $i,j \in [n+1]$ and $Y\succeq \mathbf{0}$. Then $x\in \{0, \pm 1\}^{n}$. The Schur complement lemma implies ${X} - {x}{x}^\top \succeq \mathbf{0}$.
If ${X}_{ii}=0$ then ${x}_i=0$, and if ${X}_{ii}=1$ then ${x}_i=1$ or~${x}_i=-1$.
Thus $\diag({X}- {x}{x}^\top)=\mathbf{0}$,
from {\color{black}which} it follows that $X= xx^\top$.
The converse direction is trivial.
\end{proof}
%%%%%%%%%%%%%%%%%%%%%%%%%%%%%%%%%%%%%%%%%%
Clearly, the condition  $\mbox{supp}(\diag(X))= \mbox{supp}(x)$ can be replaced by $\diag(X)_{ii}=|x_i|$ for all~$i\in [n]$, where $|\cdot |$ denotes the absolute value.
\section{Binary quadratic optimization problems}
\label{Sect:BQPformulations}

In this section we exploit the theoretical results on discrete PSD matrices from the previous section to derive exact reformulations of binary quadratic programs (BQPs) as binary semidefinite programs. 
In Section~\ref{section:vector lifting} we consider the general class of binary quadratically constrained quadratic programs. In Section~\ref{section:matrix lifting} we consider a subclass of these programs that allow for a formulation as a binary quadratic matrix program.

\subsection{Binary quadratically constrained quadratic programs}
\label{section:vector lifting}

A quadratically constrained quadratic program (QCQP) is an optimization problem with a quadratic objective function under the presence of quadratic constraints. Many discrete optimization problems can be formulated as QCQPs. 

Let $Q_0, Q_i \in \mathcal{S}^n$, $c_0, c_i  \in \mathbb{R}^n$ for all $i \in [m]$, and ${a_i} \in \mathbb{R}^n$, $b_i \in \mathbb{R}$ for all $i \in [p]$, where~$m,p\in \mathbb{N}$.
We consider binary programs of the following  form: 
\begin{align} 
& \begin{aligned}
\min \quad & {x}^\top Q_0 {x} + c_0^\top {x} \\
\text{s.t.} \quad  & x^\top Q_i x + c_i^\top x \leq  d_i \quad \forall  i\in [m] \\
& {a_i}^\top {x} = b_i \quad \forall i \in [p] \\
& {x} \in \{0,1\}^n. \end{aligned} \label{BQP01} \tag{$QCQP$} \\
\intertext{The quadratic terms in \eqref{BQP01} can be written as $\langle {Q_i}, {X} \rangle + c_i^\top {x}$ for all $i$, where we substitute~${X}$ for~${x}{x}^\top.$  
This yields the following exact {\color{black}reformulation}  of~\eqref{BQP01}: }
& \begin{aligned}
\min \quad & \langle Q_0 , X \rangle  + c_0^\top {x} \\
\text{s.t.} \quad  &  \langle Q_i, X  \rangle + c_i^\top x \leq d_i \quad \forall  i\in [m] \\
& {a_i}^\top {x} = b_i \quad \forall i \in [p] \\
& Y = \begin{pmatrix}
1 & {x}^\top \\ {x} & {X}
\end{pmatrix} \succeq \bold{0},~\diag({X}) = {x},~ \mbox{rank}(Y)=1. \end{aligned}\nonumber \\
\intertext{Here we used the conventional notion of exactness, i.e., the nonconvex constraint~${\rank(Y) = 1}$. {\color{black} We also
exploit here Theorem~\ref{Thm:binPSDrank} in order to not explicitly require $x$ to be binary.}
However, one can utilize an alternative notion of exactness in terms of integrality, namely by exploiting Theorem~\ref{Thm:binPSDrank}. This leads to the following binary semidefinite program (BSDP):}
& \begin{aligned}  
\min \quad & \langle Q_0, {X} \rangle + c_0^\top {x} \\
\text{s.t.} \quad  &  \langle Q_i, X  \rangle + c_i^\top x \leq d_i \quad \forall  i\in [m] \\
& {a_i}^\top {x} = b_i \quad \forall i \in [p] \\
& \begin{pmatrix}
1 & {x}^\top \\ {x} & {X}
\end{pmatrix} \succeq \bold{0},~\diag({X}) = {x} , ~{\color{black}x \in \{0,1\}^{n}}.
\end{aligned} \label{BQP01_bin} \tag{$BSDP_{QCQP}$}
\end{align}
\textcolor{black}{Observe that it is sufficient to impose integrality on the diagonal of $X$. Namely, it follows from the determinants of the $3 \times 3$ principal submatrices of the matrix $Y$ that $X_{ij} \in \{0,1\}$ whenever $X_{ii},~X_{jj} \in \{0,1\}$ for all $i$ and $j$, see e.g., \cite[Section 3.2]{HelmbergThesis}.}
 Note that a binary matrix $X$ that satisfies the linear matrix inequality from \eqref{BQP01_bin} with~$x=\diag(X)$, is an element of  $\mathcal{D}^n_1$, see \eqref{Eq:SetDdiscreteLMI}. 
\begin{comment}
\end{comment}
The next result follows directly from the previous discussion and Theorem~\ref{Thm:binPSDrank}.
\begin{theorem}  \label{Thm:BQP01_Equivalence} 
\eqref{BQP01_bin} is equivalent to~\eqref{BQP01}. 
\end{theorem}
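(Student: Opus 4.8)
The plan is to establish a bijection between feasible points of \eqref{BQP01} and feasible points of \eqref{BQP01_bin} that preserves the objective value. Since both problems are minimization problems, showing that the feasible regions are in objective-preserving correspondence immediately yields equality of optimal values (and, in particular, that one is infeasible precisely when the other is).

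First I would handle the direction from \eqref{BQP01} to \eqref{BQP01_bin}. Given a feasible $x \in \{0,1\}^n$ for \eqref{BQP01}, set $X := xx^\top$. Then $\diag(X) = x$ because $x_i^2 = x_i$ for binary $x_i$, and the lifted matrix $Y = \begin{psmallmatrix} 1 & x^\top \\ x & X \end{psmallmatrix} = \begin{psmallmatrix} 1 \\ x \end{psmallmatrix}\begin{psmallmatrix} 1 \\ x \end{psmallmatrix}^\top \succeq \bold{0}$ has rank one, hence is in particular PSD. Moreover $x \in \{0,1\}^n$ by assumption. The identity $x^\top Q_i x = \langle Q_i, xx^\top \rangle = \langle Q_i, X\rangle$ shows that all constraints of \eqref{BQP01_bin} are satisfied and that the objective values coincide.

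Next I would treat the converse direction, which is where Theorem~\ref{Thm:binPSDrank} does the essential work. Suppose $(x,X)$ is feasible for \eqref{BQP01_bin}. The matrix $Y = \begin{psmallmatrix} 1 & x^\top \\ x & X \end{psmallmatrix}$ is PSD with $\diag(X) = x$, and $x$ is binary, hence $X$ is a binary matrix (as noted in the excerpt, integrality of the diagonal forces integrality of the whole matrix via the $3\times 3$ principal minors, or one can simply invoke that $X \in \{0,1\}^{n\times n}$ together with $\diag(X)=x$). Now Theorem~\ref{Thm:binPSDrank} applies: since $X \in \{0,1\}^{n\times n}$ with $\diag(X) = x$ and $Y \succeq \bold{0}$, we get $\rank(Y) = 1$. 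Therefore $Y = \bar{x}\bar{x}^\top$ for some vector $\bar x$; comparing the $(1,1)$ entry and the first row shows $\bar x = [1~x^\top]^\top$, so $X = xx^\top$. Since $x \in \{0,1\}^n$, the point $x$ is feasible for \eqref{BQP01}, and substituting $X = xx^\top$ back into the objective and constraints recovers exactly the quadratic forms of \eqref{BQP01} with the same objective value.

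The only subtlety — and the part that would require the most care — is making sure the argument that binary diagonal forces a binary matrix is correctly invoked, since Theorem~\ref{Thm:binPSDrank} as stated assumes $X \in \{0,1\}^{n\times n}$ rather than merely $\diag(X) \in \{0,1\}^n$. The excerpt already records this implication (via the determinants of the $3\times 3$ principal submatrices of $Y$, citing \cite{HelmbergThesis}), so I would cite that observation explicitly as the bridge, after which Theorem~\ref{Thm:binPSDrank} closes the loop. Everything else is the routine verification that the substitution $X \leftrightarrow xx^\top$ identifies objective values and constraint satisfaction on both sides.
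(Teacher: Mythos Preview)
Your proposal is correct and matches the paper's approach exactly: the paper simply states that the result ``follows directly from the previous discussion and Theorem~\ref{Thm:binPSDrank},'' where the ``previous discussion'' is precisely the lifting $X = xx^\top$ together with the observation (recorded just before the theorem) that integrality of $\diag(X)$ forces $X \in \{0,1\}^{n\times n}$ via the $3\times 3$ principal minors of $Y$. You have spelled out in full the two-direction argument that the paper leaves implicit, and you correctly identified the one nontrivial bridge---that $x\in\{0,1\}^n$ with $\diag(X)=x$ implies $X\in\{0,1\}^{n\times n}$---needed before Theorem~\ref{Thm:binPSDrank} can be applied.
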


To provide a more compact BSDP formulation of~\eqref{BQP01}, we prove the following result.
\begin{lemma} \label{Lemma:CompactBSDP}
Let~$S= {\sum_{i=1}^p}
\begin{psmallmatrix} {-b_i} \\ {a_i} \end{psmallmatrix}
\begin{psmallmatrix} {-b_i} \\ {a_i} \end{psmallmatrix}^\top
$ and $Y=\begin{psmallmatrix}
1 & {x}^\top  \\ {x} & {X}
\end{psmallmatrix} \succeq \mathbf{0}$, where $\diag({X}) = {x}$ and ${X \in \{0,1\}^{n \times n}}$. 
Then, $a_i^\top x=b_i$ for all $i\in [p]$ if and only if $\langle S, Y \rangle =0$.
\end{lemma}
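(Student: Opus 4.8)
The plan is to compute $\langle S, Y \rangle$ directly using the definition of $S$ as a sum of rank-one terms. Writing $\bar{a}_i := \begin{psmallmatrix} -b_i \\ a_i \end{psmallmatrix}$ and $\bar{x} := \begin{psmallmatrix} 1 \\ x \end{psmallmatrix}$, we have $\langle S, Y \rangle = \sum_{i=1}^p \langle \bar{a}_i \bar{a}_i^\top, Y \rangle$. The key point is that $Y \succeq \mathbf{0}$ together with $\diag(X) = x$ and $X \in \{0,1\}^{n \times n}$ forces $\rank(Y) = 1$ by Theorem~\ref{Thm:binPSDrank}, so $Y = \bar{x}\bar{x}^\top$. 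Therefore each term satisfies $\langle \bar{a}_i\bar{a}_i^\top, Y \rangle = \langle \bar{a}_i\bar{a}_i^\top, \bar{x}\bar{x}^\top\rangle = (\bar{a}_i^\top \bar{x})^2 = (a_i^\top x - b_i)^2$, and hence $\langle S, Y \rangle = \sum_{i=1}^p (a_i^\top x - b_i)^2$.

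From this identity both directions are immediate. If $a_i^\top x = b_i$ for all $i \in [p]$, then every summand vanishes and $\langle S, Y\rangle = 0$. Conversely, $\langle S, Y\rangle = 0$ means $\sum_{i=1}^p (a_i^\top x - b_i)^2 = 0$, and since this is a sum of squares of reals, each term must be zero, giving $a_i^\top x = b_i$ for all $i$.

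The only subtlety — and the step I would be most careful about — is the appeal to Theorem~\ref{Thm:binPSDrank} to conclude $Y = \bar{x}\bar{x}^\top$: Theorem~\ref{Thm:binPSDrank} is stated as an equivalence ($\rank(Y) = 1$ iff $X \in \{0,1\}^{n\times n}$) under the hypotheses $Y \succeq \mathbf{0}$ and $\diag(X) = x$, which are exactly the hypotheses of the lemma, so the application is clean. One could alternatively avoid invoking rank altogether: since $S \succeq \mathbf{0}$ and $Y \succeq \mathbf{0}$, the quantity $\langle S, Y\rangle$ is automatically nonnegative, and $\langle S, Y \rangle = 0$ holds iff $SY = \mathbf{0}$ iff $\Col(Y) \subseteq \Nul(S)$, which (writing out $S = \sum_i \bar{a}_i\bar{a}_i^\top$) is equivalent to $\bar{a}_i^\top y = 0$ for every column $y$ of $Y$ and every $i$; applying this to the first column $\bar{x}$ of $Y$ gives $a_i^\top x = b_i$. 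I would present the rank-one argument as the main line since it is shortest given the tools already developed, mentioning the alternative only if a referee objects.
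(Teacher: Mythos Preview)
Your proof is correct and follows essentially the same route as the paper: invoke Theorem~\ref{Thm:binPSDrank} to get $Y=\bar{x}\bar{x}^\top$, then compute $\langle S,Y\rangle=\sum_{i=1}^p(a_i^\top x-b_i)^2$ and conclude both directions from the sum-of-squares identity. The alternative argument via $SY=\mathbf{0}$ is a nice aside but unnecessary here.
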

\begin{proof}
It follows from \Cref{Thm:binPSDrank} that
$Y=\begin{psmallmatrix} 1 \\ x \end{psmallmatrix}\begin{psmallmatrix} 1 \\ x \end{psmallmatrix}^\top$.
If $a_i^\top x=b_i$ for all~$i\in [p]$, it is not difficult to verify that $\langle S, Y \rangle =0$.
Conversely, let $\langle S, Y \rangle =0$. Then,
$
0 =  \sum_{i=1}^p \left \langle
\begin{psmallmatrix} {-b_i} \\ {a_i} \end{psmallmatrix}
\begin{psmallmatrix} {-b_i} \\ {a_i} \end{psmallmatrix}^\top, \begin{psmallmatrix} 1 \\ x \end{psmallmatrix}\begin{psmallmatrix} 1 \\ x \end{psmallmatrix}^\top \right \rangle 
=
 \sum_{i=1}^p (b_i-a_i^\top x)^2,
$
from {\color{black}which} it follows that $a_i^\top x=b_i$ for all  $i\in [p]$.
\end{proof}
Lemma~\ref{Lemma:CompactBSDP} induces the following compact BSDP that is equivalent to~\eqref{BQP01}:
\begin{align*}  
\begin{aligned}
\min \quad & \langle Q_0, {X} \rangle + c^\top {x} \\
\text{s.t.} \quad  &  \langle Q_i, X  \rangle + c_i^\top x \leq  d_i \quad \forall  i\in [m] \\
&  \sum\limits_{i=1}^p  \left \langle 
\begin{pmatrix} {-b_i} \\ {a_i} \end{pmatrix}
\begin{pmatrix} {-b_i} \\ {a_i} \end{pmatrix}^\top,\begin{pmatrix}
1 & {x}^\top \\ {x} & {X}
\end{pmatrix}  \right \rangle =0\\
& \begin{pmatrix}
1 & {x}^\top \\ {x} & {X}
\end{pmatrix} 
\succeq \bold{0},~\diag({X}) = {x},~{\color{black}x \in \{0,1\}^{n}}.  
\end{aligned}
\end{align*}
There are various equivalent formulations of the  binary quadratic program~\eqref{BQP01} in the literature. We finalize this subsection by mentioning below only those that are closely related to our approach.

Assume that  $Q_i=\mathbf{0}$, $c_i=\mathbf{0}$, and $d_i=0$ for all $i\in[m]$ in~\eqref{BQP01}.
Burer~\cite{Burer2009} proved that the resulting  optimization problem with  a quadratic objective and linear constraints is equivalent to the following completely positive program:
\begin{align*}  
& \begin{aligned}
\min \quad & \langle Q_0, {X} \rangle + c^\top {x} \\
\text{s.t.} \quad  &  {a_i}^\top {x} = b_i \quad \forall i \in [p], ~\langle {a_i}{a_i}^\top , {X} \rangle = b_i^2 \quad \forall i \in [p] \\
& \begin{pmatrix}
1 & {x}^\top \\ {x} & {X}
\end{pmatrix}  \in \mathcal{CP}^{n+1},~  \diag({X}) = {x} ,
\end{aligned}
\intertext{
provided that the inequalities $0\leq x_i \leq 1$ for $i\in [n]$ are implied by the constraints of the original problem.
Here $\mathcal{CP}^{n+1}$ is the cone of completely positive matrices. \endgraf
On the other hand, Lieder et al.~\cite{LiederEtAl} proved the following  equivalent formulation of the BQP {\color{black} with quadratic objective and linear constraints}:}
& \begin{aligned}
\min \quad & \langle Q_0, {X} \rangle + c^\top {x} \\
\text{s.t.} \quad  &  {a_i}^\top {x} = b_i \quad \forall i \in [p] \\
& \begin{pmatrix}
1 & {x}^\top \\ {x} & {X}
\end{pmatrix}  \in \mathcal{SCP}^{n+1},~ \diag({X}) = {x}, 
\end{aligned}
\end{align*}
where the cone  $\mathcal{SCP}^{n+1}$ is defined in~\eqref{Def:SetCompletelyPositive}.
The authors of~\cite{LiederEtAl} also proved that, under mild assumptions, the binary quadratic problem~\eqref{BQP01} with also quadratic constraints  
can be equivalently reformulated as an optimization problem over the set-completely positive matrix cone $\mathcal{SCP}^{n+1}$. 

\medskip 

We end this section by providing an example of a problem that can be modeled as~\eqref{BQP01_bin}.

\begin{example}[The stable set problem]\label{example:stable set1} 
Let $G = (V,E)$ be a simple graph on $n$ vertices. A stable set in $G$ is a subset $S \subseteq V$ such that no two vertices in $S$ are adjacent in $G$. The stable set problem (SSP) asks for the largest size of a stable set in $G$. To model this problem, let~$x \in \{0,1\}^ n$ be such that $x_i = 1$ if $i \in S$ and~$x_i = 0$ otherwise. Then, $x$ is the 
characteristic  vector  of a stable set  in $G$ if $x^\top (\mathbf{E}_{ij} + \mathbf{E}_{ij}^\top) x = 0$ for all~$\{i,j\} \in E$. The cardinality of the stable set equals $x^\top x$, hence the SSP is of the form~\eqref{BQP01}. Applying Theorem~\ref{Thm:BQP01_Equivalence}, the following BSDP models the SSP: 
\begin{align} \label{Eq:stabilitynum}
    \begin{aligned}
    \alpha(G):=   \max \quad & \langle \bold{I}_n,~ X \rangle \\
        \text{s.t.} \quad & X_{ij} = 0 \quad \forall \{i,j\} \in E \\
        & \begin{pmatrix}
            1 & x^\top \\ x & X
        \end{pmatrix} \succeq \bold{0},~\diag(X) = x,~{\color{black}x \in \{0,1\}^{n}}. 
    \end{aligned}
\end{align} 
The doubly nonnegative relaxation of~\eqref{Eq:stabilitynum} obtained after replacing $X \in \{0,1\}^{n \times n}$ by~${\bold{0} \leq X \leq \bold{J}}$, is well-studied in the literature, see e.g.,~\cite{GrotschelEtAl}. It is equivalent to a strengthened version of the Lov\'asz theta number, known as the Schrijver $\vartheta'$-number~\cite{SchrijverTheta}. \hfill \qed
\end{example}

\subsection{Binary quadratic matrix programs}
\label{section:matrix lifting}

A quadratic matrix program (QMP) \cite{Beck} is a programming formulation where {\color{black}the objective and constraints are given by}
\begin{align} \label{Eq:QMPconstraint}
   \tr(P^\top Q_i P) + 2\tr(B_i^\top P) + d_i
\end{align}
for some $Q_i \in \mathcal{S}^n$, $B_i \in \mathbb{R}^{n \times k}$ and $c_i \in \mathbb{R}$, where $P$ is an $n\times k$ matrix variable. QMPs are a special case of QCQPs and are particularly useful to model optimization problems where the matrix $P$ has entries in $\{0,1\}$ and represents a classification of $n$ objects over $k$ classes, i.e., $P_{ij} = 1$ if and only if object $i$ is assigned to class $j$. For example, if each object needs to be assigned in exactly (resp.~at most) one class, we call $P$ a partition (resp.~packing) matrix. 

In this section we consider two different binary QMPs of increasing generality and show how these can be reformulated as BSDPs.  For both QMPs, we consider some well-known problems that fit {\color{black}into} the framework. 

Our first QMP incorporates a specific objective and constraint structure, while optimizing over the packing or partition matrices.
Let $Q_0, Q_i \in \mathcal{S}^n$, $d_i \in \mathbb{R}$ for all $i \in [m]$, $a_i \in \mathbb{R}^n$ and~$b_i \in \mathbb{R}_+$ for all $i \in [p]$.  We consider the binary quadratic matrix program
\begin{align} \tag{$QMP_1$} \label{QMP_1} 
    \begin{aligned}
        \min \quad & \tr({P}^\top {Q}_0 P) \\
        \text{s.t.} \quad & \tr(P^\top Q_i P) + d_i \leq 0 \quad \forall i \in [m],~ P^\top a_i \leq b_i \bold{1}_k \quad \forall i \in [p] \\
        & P \bold{1}_k \leq \bold{1}_n,~P \in \{0,1\}^{n \times k}. 
    \end{aligned} 
\end{align}
Observe that $P\bold{1}_k \leq \bold{1}_n$ implies that $P$ is a packing matrix. This constraint is replaced by~$P\bold{1}_k = \bold{1}_n$ in case we deal with partition matrices. The constraints~${\tr(P^\top Q_i P) + d_i \leq 0}$ and~$P^\top a_i \leq b_i \bold{1}_k$ might follow from the structure of the problem under consideration. Observe that these constraints differ from the general form~\eqref{Eq:QMPconstraint} in the sense that the linear part $\tr(B_i^\top P)$ is only included in a very specific form. 

A possible way to deal with the quadratic terms in~\eqref{QMP_1} is by lifting the variables in a higher-dimensional space.  By vectorizing the matrix $P$, the problem~\eqref{QMP_1} can be written in the form~\eqref{BQP01}, after which we can follow the approach of Section~\ref{section:vector lifting}. This results in a BSDP where the matrix variable is of order $nk + 1$. 
Since the resulting program is obtained from a lifting of the vectorization of $P$,  we say that we applied a vector-lifting approach. To obtain a more compact problem formulation where the matrix variable is of lower order, we here consider a matrix-lifting approach. In particular, the objective function can be written as $\tr( P^\top Q_0 P) = \tr(Q_0 PP^\top ) = \tr(Q_0 X)$,
where $X = PP^\top$. By doing so, we obtain the following BSDP:
\begin{align} \tag{$BSDP_{QMP1}$} \label{ISDP_ML1}
   \begin{aligned}
        \min \quad & \langle {Q}_0, X \rangle \\
        \text{s.t.} \quad & \langle {Q}_i, X \rangle + d_i \leq 0 \quad \forall i \in [m],~ X a_i \leq b_i x \quad \forall i \in [p] \\
        & \begin{pmatrix}
            k & x^\top \\
            x & X
        \end{pmatrix} \succeq \bold{0},~\diag(X) = x,~X \in \{0,1\}^{n \times n}. 
    \end{aligned} 
\end{align}
If a QMP is defined over the partition matrices, then  $P\bold{1}_k \leq  \bold{1}_n$ is replaced by~${P\bold{1}_k = \bold{1}_n}$ in~\eqref{QMP_1}, and consequently 
 $\diag(X) = x$ is  replaced
by $\diag(X) = \bold{1}_n$ in~\eqref{ISDP_ML1}.
By exploiting theory from Section~\ref{Subsect:theory01}, we  show the following equivalence.

\begin{theorem} \label{THM:QMP_1_Equivalence}
    \eqref{ISDP_ML1} is equivalent to \eqref{QMP_1}. 
\end{theorem}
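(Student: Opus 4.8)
The plan is to establish the equivalence by exhibiting a bijection between feasible solutions of \eqref{QMP_1} and of \eqref{ISDP_ML1} that preserves the objective value. The forward direction is straightforward: given a feasible packing matrix $P \in \{0,1\}^{n\times k}$ for \eqref{QMP_1}, I would set $X := PP^\top$ and $x := \diag(X) = \diag(PP^\top)$. Since $P\bold{1}_k \leq \bold{1}_n$ means $P$ is a packing matrix, each row of $P$ has at most one nonzero entry, so $X = PP^\top \in \{0,1\}^{n\times n}$ (in fact $X \in \mathcal{D}^n_k$, being the characteristic matrix of a clique packing). The objective values agree because $\tr(P^\top Q_0 P) = \tr(Q_0 PP^\top) = \langle Q_0, X\rangle$, and similarly for the constraints $\tr(P^\top Q_i P) + d_i \leq 0$. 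For the linear constraints $P^\top a_i \leq b_i \bold{1}_k$, I would left-multiply by $P$ to get $PP^\top a_i \leq b_i P\bold{1}_k \leq b_i \bold{1}_n$, but more carefully I should check this recovers exactly $Xa_i \leq b_i x$: since $x = P\bold{1}_k$ (the row sums, which for a packing matrix are the diagonal of $X$), we get $Xa_i = PP^\top a_i \leq b_i P \bold{1}_k = b_i x$ using nonnegativity of $b_i$ and $P \geq \bold{0}$. Finally, to verify the linear matrix inequality, I would invoke that $\begin{psmallmatrix} \bold{I}_k & P^\top \\ P & PP^\top \end{psmallmatrix} = \begin{psmallmatrix} \bold{I}_k \\ P \end{psmallmatrix}\begin{psmallmatrix} \bold{I}_k \\ P \end{psmallmatrix}^\top \succeq \bold{0}$, and then congruence with $\bold{1}_k^\top \oplus \bold{I}_n$ — exactly as in the proof of Corollary~\ref{Cor:ExactRank} — yields $\begin{psmallmatrix} k & x^\top \\ x & X \end{psmallmatrix} \succeq \bold{0}$.

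For the reverse direction, suppose $(x, X)$ is feasible for \eqref{ISDP_ML1}. Since $X \in \{0,1\}^{n\times n}$ is symmetric and $\begin{psmallmatrix} k & \diag(X)^\top \\ \diag(X) & X \end{psmallmatrix} \succeq \bold{0}$, Corollary~\ref{Cor:binPSDrank_upper} gives $X \succeq \bold{0}$ with $\rank(X) \leq k$. By Theorem~\ref{Thm:binPSD} (or directly the clique-packing interpretation), $X = \sum_{j=1}^r x_j x_j^\top$ with $x_j \in \{0,1\}^n$ pairwise orthogonal (disjoint cliques) and $r = \rank(X) \leq k$. Padding with zero columns, I can assemble $P := [x_1 \ \cdots \ x_r \ \bold{0} \ \cdots \ \bold{0}] \in \{0,1\}^{n\times k}$, which satisfies $PP^\top = X$ and $P\bold{1}_k \leq \bold{1}_n$ (the disjointness of the cliques). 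Then $\tr(P^\top Q_0 P) = \langle Q_0, X\rangle$ and $\tr(P^\top Q_i P) + d_i = \langle Q_i, X\rangle + d_i \leq 0$. The constraint $P^\top a_i \leq b_i \bold{1}_k$ needs $x_j^\top a_i \leq b_i$ for each clique; this should follow from $Xa_i \leq b_i x$ by testing against the indicator of a single clique — I would use that for $v = x_j$, $v^\top X a_i = v^\top(\sum_\ell x_\ell x_\ell^\top) a_i = \|x_j\|^2 (x_j^\top a_i)$ by orthogonality (since $v = x_j$ is $0/1$, $v^\top x_j = \|x_j\|^2 = x_j^\top \bold{1}$), while $v^\top (b_i x) = b_i v^\top \diag(X) = b_i \|x_j\|^2$, giving $x_j^\top a_i \leq b_i$ after dividing by $\|x_j\|^2 > 0$. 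The partition case replaces $P\bold{1}_k \leq \bold{1}_n$ by $P\bold{1}_k = \bold{1}_n$ and $\diag(X) = x$ by $\diag(X) = \bold{1}_n$, in which case $r = k$ is forced (since $\rank(X) \leq k$ and every vertex lies in some clique, implying $\geq$ the number of cliques covering all $n$ vertices can be taken exactly $k$ by splitting or we use Theorem~\ref{Thm:binPSDrank_lower}) and the same construction goes through.

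The main obstacle I anticipate is the careful bookkeeping around the linear constraints $P^\top a_i \leq b_i \bold{1}_k$ versus $Xa_i \leq b_i x$: the matrix-lifted constraint is genuinely weaker-looking and one must verify, using orthogonality of the clique indicators and the structure $X = PP^\top$, that it is equivalent to the per-class constraint and not a strict relaxation. A secondary subtlety is the partition case, where one must ensure the decomposition yields exactly $k$ cliques rather than fewer — here the cleanest route is to observe that $\diag(X) = \bold{1}_n$ forces every vertex into some clique, and combine Corollary~\ref{Cor:binaryProperties} (rank exactly $r$) with the freedom to split a clique into singletons to pad up to $k$ classes without changing $X$, or simply to note the objective and constraints depend on $P$ only through $X = PP^\top$, so any valid $P$ with $PP^\top = X$ suffices. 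Everything else is routine algebra with the trace identity $\tr(P^\top Q P) = \langle Q, PP^\top\rangle$ and the Schur-complement/congruence manipulations already rehearsed in Corollaries~\ref{Cor:binPSDrank_upper} and~\ref{Cor:ExactRank}.
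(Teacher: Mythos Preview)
Your proposal is correct and follows essentially the same route as the paper's proof: both directions use $X = PP^\top$, invoke Corollary~\ref{Cor:binPSDrank_upper} to bound the rank and extract a clique-packing decomposition $X = \sum_{j=1}^k x_j x_j^\top$, and verify the constraints via the trace identity and the factorization $\begin{psmallmatrix} k & x^\top \\ x & X \end{psmallmatrix} = \begin{psmallmatrix} \bold{1}_k^\top \\ P \end{psmallmatrix}\begin{psmallmatrix} \bold{1}_k^\top \\ P \end{psmallmatrix}^\top$ (which you obtain via congruence, the paper writes directly). The only cosmetic difference is in recovering $x_j^\top a_i \leq b_i$ from $Xa_i \leq b_i x$: the paper reads off a single row of the vector inequality at some $i^*$ in the support of $x_j$, whereas you multiply the inequality by $x_j^\top$ and divide by $\|x_j\|^2$; both are immediate consequences of the orthogonality of the clique indicators. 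One small remark: the map $P \mapsto (x,X)$ is not literally a bijection (column permutations of $P$ collapse), but your argument only uses that each feasible point on one side has a counterpart on the other with the same objective, which is exactly what is needed.
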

\begin{proof} Let $P$ be feasible for~\eqref{QMP_1} and define $X = PP^\top$ and $x = P\bold{1}_k$. Since $P$ represents a packing matrix, we have $X \in \{0,1\}^{n \times n}$, where $x$ is a $\{0,1\}$--vector indicating whether object $i$ is packed in one of the classes or not. Then, ${\langle Q_i, X \rangle + d_i = \langle Q_i, PP^\top \rangle + d_i = \tr(P^\top Q_i P) + d_i \leq 0}$ for all~$i \in [m]$. Moreover, we have $X a_i = PP^\top a_i \leq b_i P \bold{1}_k = b_i x$. To show that $\diag(X) = x$, observe that~${X_{ii} = \sum_{j =1}^k P_{ij}^2 = \sum_{j = 1}^k P_{ij} = \mathbf{e}_i^\top P\bold{1}_k = x_i}$. 
Finally, we can decompose the matrix~$\begin{psmallmatrix}
            k & x^\top \\ x & X \end{psmallmatrix}$ into~$\begin{psmallmatrix}
            k & x^\top \\
            x & X
        \end{psmallmatrix} = \begin{psmallmatrix}
            \bold{1}_k^\top \\ P
        \end{psmallmatrix} \begin{psmallmatrix}
            \bold{1}_k^\top \\ P
        \end{psmallmatrix}^\top,$
showing that it is PSD. We conclude that $X$ and $x$ are feasible for~\eqref{ISDP_ML1}.  

To show the converse inclusion, let $X$ and $x = \diag(X)$ be feasible for~\eqref{ISDP_ML1}. It follows from Corollary~\ref{Cor:binPSDrank_upper} that $X$ can be decomposed as the sum of at most $k$ rank-one symmetric $\{0,1\}$--matrices. By adding copies of the zero matrix in case $\rank(X) < k$, we may assume that there exist~$x_1, \ldots, x_k \in \{0,1\}^n$ such that $X = \sum_{j=1}^k x_jx_j^\top$.
Now, let $P = [
    x_1~\dots~x_k ]$. Then, $P \in \{0,1\}^{n \times k}$ with $P\bold{1}_k = \sum_{j=1}^k x_j = \diag(X) \leq \bold{1}_n$. To prove that $P^\top a_i \leq b_i\bold{1}_k$, consider column $j^*$ of $P$. {\color{black}If} all entries in $P\bold{e}_{j^*} ~(= x_{j^*})$ are zero, implying that $\bold{e}_{j^*}^\top P^\top a_i = 0 \leq b_i$, since $b_i \in \mathbb{R}_+$.
Otherwise, there exists a row $i^*$ such that $P_{i^*j^*} = 1$. For the~$i^*$th row of $X$, we know $\bold{e}_{i^*}^\top X =\sum_{j=1}^k(x_j)_{i^*} x_j^\top = x_{j^*}^\top$.
The $i^*$th row of the system $Xa_i \leq b_i x$ then reads $x_{j^*}^\top a_i \leq b_i x_{i^*} = b_i$. Hence,~${P^\top a_i \leq b_i \bold{1}_k}$. Finally, the constraint $\tr(P^\top Q_i P) + d_i \leq 0$ follows immediately from $\langle Q_i, X \rangle +d_i \leq 0$ for all $i \in [m]$. Thus,~$P$ is feasible for~\eqref{QMP_1}. 
    
    As the objective functions of~\eqref{QMP_1} and \eqref{ISDP_ML1} clearly coincide with respect to the given mapping between $P$ and $X$, we conclude that the two programs are equivalent.
\end{proof}
The matrix $P$ {\color{black}no longer appears} explicitly in~\eqref{ISDP_ML1}, and therefore we will not be able to write all quadratic problems over the packing or partition matrices in this form. The typical problems that can be modeled as~\eqref{ISDP_ML1}, are the ones that are symmetric over the classes~$[k]$, i.e., we do not add constraints for one specific class. 
Below we discuss two examples from the literature that fit in the framework of~\eqref{QMP_1}.

\begin{example}[The maximum $k$-colorable subgraph problem]
    Let $G = (V,E)$ be a simple graph with~$n := |V|$ and $m := |E|$. Given a positive integer $k$, a graph is called $k$-colorable if it is possible to assign to each vertex in $V$ a color in $[k]$ such that any two adjacent vertices get assigned a different color. The maximum~$k$-colorable subgraph  (M$k$CS) problem, see e.g., \cite{KuryatnikovaEtAl, Narasimhan}, asks to find an induced subgraph $G' = (V', E')$ of~$G$ that is $k$-colorable such that $|V'|$ is maximized.

    The M$k$CS problem can be modeled as~\eqref{QMP_1} where $P \in \{0,1\}^{n \times k}$ is such that $P_{ij} = 1$ if and only if vertex $i \in [V]$ is in color class $j \in [k]$. 
    In order to model that $P$ induces a coloring in~$G$, we include the constraints $\tr(P^\top (\mathbf{E}_{ij} + \mathbf{E}_{ji}) P) = 0$ for all $\{i,j\} \in E$.
    Additional constraints of the form~$P^\top a_i\leq b_i \bold{1}_k$ do not appear in the formulation. 

    Now, it follows from Theorem~\ref{THM:QMP_1_Equivalence} that the M$k$CS problem can be modeled as the following BSDP: 
\begin{align}  \label{BSDP_{MkCS}}
 \begin{aligned}
        \max \quad & \langle \bold{I}_n, X \rangle \\
        \text{s.t.} \quad & X_{ij} = 0 \quad \forall \{i,j\} \in E,~ \begin{pmatrix}
            k & x^\top \\
            x & X
        \end{pmatrix} \succeq \bold{0},~ \diag(X) = x,~X \in \{0,1\}^{n \times n}.
    \end{aligned} 
\end{align}
The doubly nonnegative relaxation of~\eqref{BSDP_{MkCS}} obtained after replacing~${X \in \{0,1\}^{n \times n}}$ by~${\bold{0} \leq X \leq \bold{J}}$, equals the formulation $\theta^3_k(G)$ derived in~\cite{KuryatnikovaEtAl}.  \hfill \qed
\end{example}

The next example shows that the parameter $k$ in~\eqref{ISDP_ML1} can also be used as a variable in order to quantify the number of classes in the solution. 

\begin{example}[The quadratic bin packing problem]
    Let a set of $n$ items  be given, each with a positive weight~$w_i \in \mathbb{R}_+$, $i \in [n]$. Assume an unbounded  number of bins is available, each with total capacity~$W \in \mathbb{R}_+$ and cost $c \in \mathbb{R}_+$.  Moreover, let $D = (d_{ij}) \in \mathcal{S}^n$ denote a dissimilarity matrix, where $d_{ij} $ equals the cost of packing item $i$ and $j$ in the same bin. The goal of the quadratic bin packing problem (QBPP), see e.g., \cite{Chagas}, is to assign each item to exactly one bin, under the condition that the total sum of weights for each bin does not exceed $W$, such that the sum of the total dissimilarity and the cost of the used bins is minimized. 

    Let us first consider the related problem where the number of available bins equals $k$. This problem can be modeled in the form~\eqref{QMP_1}, where $P \in \{0,1\}^{n \times k}$ is a matrix with~$P_{ij} = 1$ if and only if item $i$ is contained in bin $j$. Since all items need to be packed, we require $P$ to be a partition matrix, i.e., $P\bold{1}_k = \bold{1}_n$. Moreover, the capacity constraints can be modeled as $P^\top w \leq W \bold{1}_k$. Theorem~\ref{THM:QMP_1_Equivalence} shows that this problem can be modeled as a BSDP where the number of bins $k$ appears as a parameter. If we replace $k$ by a variable~$z$, we obtain the following formulation of the QBPP: 
\begin{align}  \label{BSDP_{QBPP}}
     \begin{aligned}
        \min \quad & \left \langle \begin{pmatrix}
            z & \bold{1}_n^\top \\
            \bold{1}_n & X
        \end{pmatrix},~c \oplus D  \right \rangle \\
        \text{s.t.} \quad & Xw \leq W \bold{1}_n,~\diag(X) = \bold{1}_n,~ \begin{pmatrix}
            z & \bold{1}_n^\top \\
            \bold{1}_n & X
        \end{pmatrix} \succeq \bold{0},~X \in \{0,1\}^{n \times n},~z \in \mathbb{R}.
    \end{aligned} 
\end{align}
The variable $z$ is not explicitly restricted to be integer, since at an {\color{black}optimal} solution it will always be equal to $\rank(X)$. 
\hfill \qed
\end{example}
The quadratic matrix program~\eqref{QMP_1} only includes specific types of constraints of the form~\eqref{Eq:QMPconstraint}. 
We now consider a generalization of~\eqref{QMP_1}. Let $Q_0, Q_i \in \mathcal{S}^n$, $B_0, B_i \in \mathbb{R}^{n \times k}$ and~$d_0, d_i \in \mathbb{R}$ for all~$i \in [m]$ and consider the quadratic matrix program
\begin{align} \tag{$QMP_2$} \label{QMP_2} 
   &  \begin{aligned}
        \min \quad & \tr({P}^\top {Q}_0 P) + 2\tr(B_0^\top P) + d_0 \\
        \text{s.t.} \quad & \tr(P^\top Q_i P) + 2\tr(B_i^\top P) + d_i \leq 0 \quad \forall i \in [m] \\
        & P \bold{1}_k \leq \bold{1}_n,~P \in \{0,1\}^{n \times k}. 
    \end{aligned} 
\intertext{Again, the constraint $P\bold{1}_k \leq \bold{1}_n$ can be replaced by $P \bold{1}_k = \bold{1}_n$ when  optimizing over partition matrices. Now, let us consider the BSDP}
 \tag{$BSDP_{QMP2}$} \label{ISDP_ML2}
& \begin{aligned}
        \min \quad & \left \langle \begin{pmatrix}
\frac{d_0}{k}\bold{I}_k & B_0^\top \\
B_0^\top & {Q}_0
\end{pmatrix}, \begin{pmatrix}
\bold{I}_k & {P}^\top \\
{P} & {X}
\end{pmatrix} \right \rangle \\
\text{s.t.}\quad & \left \langle \begin{pmatrix}
\frac{d_i}{k}\bold{I}_k & B_i^\top \\
B_i^\top & {Q}_i
\end{pmatrix}, \begin{pmatrix}
\bold{I}_k & {P}^\top \\
{P} & {X}
\end{pmatrix} \right \rangle \leq 0 \quad \forall i \in [m] \\
        & \begin{pmatrix}
\bold{I}_k & {P}^\top \\
{P} & {X}
\end{pmatrix} \succeq \bold{0},~\diag(X) = P\bold{1}_k,~X \in \{0,1\}^{n \times n},~P \in \{0,1\}^{n \times k}, 
    \end{aligned} 
\end{align}
which is equivalent to~\eqref{QMP_2}, as shown below.
\begin{theorem} \label{THM:QMP_2_Equivalence} 
    \eqref{ISDP_ML2} is equivalent to \eqref{QMP_2}. 
\end{theorem}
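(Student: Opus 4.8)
The plan is to mimic the structure of the proof of Theorem~\ref{THM:QMP_1_Equivalence}, establishing a bijection between feasible points of \eqref{QMP_2} and \eqref{ISDP_ML2} that preserves objective values. The key observation is that the extra linear terms $2\tr(B_i^\top P)$ are now absorbed into a block inner product: for any $P \in \mathbb{R}^{n \times k}$ and $X = PP^\top$,
\begin{align*}
\left\langle \begin{pmatrix} \tfrac{d_i}{k}\bold{I}_k & B_i^\top \\ B_i^\top & Q_i \end{pmatrix}, \begin{pmatrix} \bold{I}_k & P^\top \\ P & X \end{pmatrix} \right\rangle = \tfrac{d_i}{k}\tr(\bold{I}_k) + 2\tr(B_i^\top P) + \tr(Q_i X) = d_i + 2\tr(B_i^\top P) + \tr(P^\top Q_i P),
\end{align*}
which is exactly the quantity in \eqref{Eq:QMPconstraint} (and likewise for $i = 0$ in the objective). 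So once the combinatorial correspondence between $P$ and $(P,X)$ is in place, the objective/constraint equivalence is immediate.

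First I would take $P$ feasible for \eqref{QMP_2} and set $X := PP^\top$. Since $P$ is a packing matrix, $X \in \{0,1\}^{n \times n}$ and $\diag(X) = P\bold{1}_k$ (the same diagonal computation as before: $X_{ii} = \sum_j P_{ij}^2 = \sum_j P_{ij}$). The block matrix factors as $\begin{psmallmatrix} \bold{I}_k & P^\top \\ P & X \end{psmallmatrix} = \begin{psmallmatrix} \bold{I}_k \\ P \end{psmallmatrix}\begin{psmallmatrix} \bold{I}_k \\ P \end{psmallmatrix}^\top \succeq \bold{0}$, and the displayed identity above turns each QMP constraint into the corresponding \eqref{ISDP_ML2} constraint. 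Hence $(P,X)$ is feasible for \eqref{ISDP_ML2} with the same objective.

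Conversely, let $(P,X)$ be feasible for \eqref{ISDP_ML2}. Here $P$ already appears explicitly as a variable, so unlike in Theorem~\ref{THM:QMP_1_Equivalence} I do not need to reconstruct it by decomposing $X$; I only need to verify $X = PP^\top$. From the Schur complement lemma, $\begin{psmallmatrix} \bold{I}_k & P^\top \\ P & X \end{psmallmatrix} \succeq \bold{0}$ gives $X \succeq PP^\top$, so $X - PP^\top \succeq \bold{0}$. Taking diagonals, $X_{ii} - (PP^\top)_{ii} = \diag(X)_i - \sum_j P_{ij}^2 = (P\bold{1}_k)_i - \sum_j P_{ij} = 0$ (using $\diag(X) = P\bold{1}_k$ and $P_{ij} \in \{0,1\}$), so the PSD matrix $X - PP^\top$ has zero diagonal and must therefore be the zero matrix, i.e.\ $X = PP^\top$. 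Feasibility of $P$ for \eqref{QMP_2} then follows: $P\bold{1}_k \leq \bold{1}_n$ is forced because $\diag(X) = P\bold{1}_k$ and $X \in \{0,1\}^{n \times n}$ means $(P\bold{1}_k)_i = X_{ii} \leq 1$; and the displayed block-inner-product identity converts each \eqref{ISDP_ML2} constraint back to the QMP form. The partition case is handled by noting that $\diag(X) = \bold{1}_n$ forces $P\bold{1}_k = \bold{1}_n$.

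The main obstacle, such as it is, is the reverse direction's argument that $X = PP^\top$ — specifically, recognizing that one does not need a full Theorem~\ref{Thm:binPSD}-style decomposition (as in \eqref{ISDP_ML1}), but instead can exploit the presence of $P$ as an explicit variable together with the zero-diagonal-implies-zero-matrix trick for PSD matrices. Once that is seen, the rest is the bookkeeping identity relating the two block inner products, which is routine. I would also remark briefly that, as in the $(BSDP_{QMP1})$ case, it suffices to impose $X \in \{0,1\}^{n\times n}$ without separately requiring $P$ binary only if one is careful; but since \eqref{ISDP_ML2} does explicitly require $P \in \{0,1\}^{n\times k}$, no such subtlety is needed here.
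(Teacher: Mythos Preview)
Your proof is correct. The forward direction is identical to the paper's. In the reverse direction you take a genuinely different and more elementary route: the paper invokes the decomposition from Theorem~\ref{Thm:binPSDrank_lower} (hence implicitly Theorem~\ref{Thm:binPSD}) to write $X = \sum_{j=1}^{k'} x_jx_j^\top$ with $P = [x_1~\dots~x_k]$, and then uses $\diag(X) = P\bold{1}_k$ to kill the extra summands $x_{k+1},\ldots,x_{k'}$. You bypass this entirely by applying the Schur complement to get $X - PP^\top \succeq \bold{0}$ and observing that $\diag(X) = P\bold{1}_k$ together with $P \in \{0,1\}^{n\times k}$ forces the diagonal of $X - PP^\top$ to vanish, hence $X = PP^\top$. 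Your argument is shorter and self-contained; the paper's argument, while less direct here, has the advantage of illustrating how the structural decomposition results of Section~\ref{Sect:TheoryPSD} feed into the reformulations.
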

\begin{proof}Let $P$ be feasible for~\eqref{QMP_2} and define $Y \in \{0,1\}^{(n+k)\times (n+k)}$ as
$Y = \begin{psmallmatrix}
        \bold{I}_k \\ P
    \end{psmallmatrix}\begin{psmallmatrix}
        \bold{I}_k \\ P
    \end{psmallmatrix}^\top = \begin{psmallmatrix}
        \bold{I}_k & P^\top  \\
        P & X
    \end{psmallmatrix}$, 
where $X := PP^\top$. Clearly, we have $Y \succeq \bold{0}$ and $X_{ii} = \sum_{j = 1}^k P_{ij}^2 = \sum_{j = 1}^k P_{ij} = \bold{e}_i^\top P\bold{1}_k$ for all~$i \in [n]$, showing that $\diag(X) = P\bold{1}_k$. Moreover, we have
\begin{align*}
    \tr(P^\top Q_i P) + 2 \tr(B_i^\top P) + d_i & = \tr(Q_i X) + 2 \tr(B_i^\top P) + d_i = \left \langle \begin{pmatrix}
\frac{d_i}{k}\bold{I}_k & B_i^\top \\
B_i^\top & {Q}_i
\end{pmatrix}, \begin{pmatrix}
\bold{I}_k & {P}^\top \\
{P} & {X}
\end{pmatrix} \right \rangle 
\end{align*}
for all $i \in [m]$ and $i = 0$. Hence, $X$ and $P$ are feasible for~\eqref{ISDP_ML2} and the objective functions coincide. 

Conversely,  let $P \in \{0,1\}^{n \times k}$ and $X \in \{0,1\}^{n \times n}$ be feasible for~\eqref{ISDP_ML2}. Following the proof of Theorem~\ref{Thm:binPSDrank_lower}, it follows that there exist $x_1, \ldots, x_{k'} \in \{0,1\}^n$ with $k' \geq k$ such that 
$P = [x_1 ~ \dots ~ x_k]$ and~$X = \sum_{j = 1}^{k'}x_jx_j^\top$. 
Since $\diag(X) = P\bold{1}_k$, it follows that for all $i \in [n]$ we have $X_{ii} = \bold{e}_i^\top P \bold{1}_k$ implying that $\sum_{j = 1}^{k'}(x_j)_i^2 = \sum_{j = 1}^k (x_j)_i$.
Since $(x_j)_i \in \{0,1\}$, the equality above only holds if $(x_j)_i = 0$ for all~$j = k+1, \ldots, k'$. As this is true for all $i \in [n]$, we have $x_j = \bold{0}_n$ for all $j = k+1, \ldots, k'$, implying that~${X = \sum_{j=1}^k x_jx_j^\top = PP^\top}$. We can now follow the derivation of the first part of the proof in the converse order to conclude that~$P$ is feasible for~\eqref{QMP_2}. 
\end{proof}
Typical problems that fit in the framework of~\eqref{QMP_2} and~\eqref{ISDP_ML2} are quadratic matrix programs over the packing or partition matrices that require constraints for specific classes, see e.g., Example~\ref{Ex:QMKP}. Another important feature of~\eqref{ISDP_ML2} is that it is possible to impose { a condition on
the rank of $X$.
\Cref{Cor:ExactRank}
implies that if we add the constraint $P^\top \bold{1}_n \geq \bold{1}_k$ to~\eqref{ISDP_ML2}, the resulting matrix $X$ has rank exactly $k$. This makes this formulation suitable for quadratic classification problems that require an exact number of classes, e.g., the (capacitated) max-$k$-cut problem~\cite{GaurEtAl}.

\begin{example}[The quadratic multiple knapsack problem] \label{Ex:QMKP}
Let a set of $n$ items be given, each with a weight~$w_i \in \mathbb{R}_+$ and a profit $p_i \in \mathbb{R}_+$, $i \in [n]$. We are also given a set of $k$ knapsacks, each with a capacity~$c_j \in \mathbb{R}_+$,~$j \in [k]$. Finally, let $R = (r_{i\ell})$ denote a revenue matrix, where $r_{i\ell}$ denotes the revenue of including items $i$ and $\ell$ in the same knapsack. The quadratic multiple knapsack problem (QMKP) aims at allocating each {\color{black}item} to at most one knapsack such that we maximize the total profit of the included items and their interaction revenues, see~\cite{HileyJulstrom}.

Let $P \in \{0,1\}^{n \times k}$ be a packing matrix where $P_{ij} = 1$ if and only if item $i$ is allocated to knapsack~$j$. The capacity constraint can be modeled as~$P^\top w \leq c$, where $w \in \mathbb{R}_+^n$ and $c \in \mathbb{R}^k_+$ denote the vector of item weights and knapsack capacities, respectively. The total profit can be computed as~$\langle R, PP^\top \rangle + p^\top P \bold{1}_k$, where $p \in \mathbb{R}^n$ denotes the vector of item profits. 
It follows from Theorem~\ref{THM:QMP_2_Equivalence} that we can model the QMKP as the following {\color{black}binary} SDP: 
\begin{align}  \label{BSDP_QMKP}
    \begin{aligned}
       {\color{black} \max} \quad & \left \langle \begin{pmatrix}
\bold{0} & \frac{1}{2}\bold{1}_k p^\top \\
\frac{1}{2} p \bold{1}_k^\top & R
\end{pmatrix}, \begin{pmatrix}
\bold{I}_k & {P}^\top \\
{P} & {X}
\end{pmatrix} \right \rangle \\
\text{s.t.}\quad & P^\top w \leq c,~\diag(X) = P\bold{1}_k \\
        & \begin{pmatrix}
\bold{I}_k & {P}^\top \\
{P} & {X}
\end{pmatrix} \succeq \bold{0},~X \in \{0,1\}^{n \times n},~P \in \{0,1\}^{n \times k}.  
    \end{aligned} 
\end{align}

\vspace{-0.8cm}

\hfill \qed
\end{example}

\section{Problem-specific formulations} \label{Sect:ProblemSpecific}
In this section we consider MISDP formulations of problems that do not belong to the binary quadratic problems or for which the reformulation technique differs from the ones in~Section~\ref{Sect:BQPformulations}.

\subsection{The QAP as a MISDP} \label{Sect:QAP}

We present a MISDP formulation of the quadratic assignment problem (QAP) that is derived by a matrix-lifting approach. 
To the best of our knowledge, our QAP formulation provides the most compact convex mixed-integer formulation of the problem in the literature. 
The formulation is motivated by the matrix-lifting SDP relaxations of the QAP derived in~\cite{DingWolkowicz}.

The quadratic assignment problem is an optimization problem of the  following  form:
\begin{align}\label{QAP-integer}
\min_{X \in \Pi_n} \tr (AXBX^\top) + \tr( CX^\top),
\end{align}
where $A,B \in {\mathcal S}^n$, $C\in \mathbb{R}^{n \times n}$
and $\Pi_n$ is the set of $n\times n$ permutation matrices.  
The QAP is among the most difficult $\mathcal{NP}$-hard combinatorial optimization problems to solve in practice. The QAP {\color{black}was} introduced in~1957 by Koopmans
and Beckmann~\cite{KoopmansBeckmann} as a model for location problems. 
Nowadays, the QAP is known as a generic model for various (real-life) problems.

By exploiting properties of the Kronecker product and \Cref{Thm:binPSDrank}, one can lift the QAP into the space of $(n^2 +1) \times (n^2 +1)$ $\{0,1\}$--matrix variables and obtain a BSDP formulation of the QAP, {see Section~\ref{section:vector lifting}}. 
Since this vector-lifting approach results in a problem formulation with a large matrix variable, we consider here a matrix-lifting approach for the QAP. Ding and Wolkowicz~\cite{DingWolkowicz} introduce several matrix-lifting SDP relaxations of the QAP with matrix variables of order $3n$. 
By imposing integrality on the matrix variable~$X$ in one of these SDP relaxations, i.e., the relaxation $\mathit{MSDR}_0$ in~\cite{DingWolkowicz}, we obtain the following MISDP:
\begin{align}  \label{QAP-BSDP}
\begin{aligned}
\min \quad&  \langle A, Y \rangle  + \langle C, X \rangle  \\
\textrm{s.t.} \quad & \begin{pmatrix}
\bold{I}_n & X^\top & R^\top \\
X & \bold{I}_n & Y \\
R & Y & Z
\end{pmatrix}\succeq \mathbf{0},~R = XB \\
& X\in \Pi_n, ~R\in  \mathbb{R}^{n \times n}, ~Y,Z\in {\mathcal S}^n. \end{aligned}
\end{align}
Note that if $B$ is an integer matrix, then $R$ is also an integer matrix. However, we do not have to impose integrality on $R$ explicitly.

The Schur complement lemma implies that the linear matrix inequality in~\eqref{QAP-BSDP} is equivalent to
\begin{align}\label{QAPlmi} 
\begin{pmatrix}
  \bold{I}_n & Y \\
  Y & Z
\end{pmatrix} - \begin{pmatrix}
XX^\top & XR^\top \\
RX^\top & RR^\top 
\end{pmatrix} \succeq \mathbf{0}.
\end{align} 
Now, we are ready to prove the following result.
\begin{proposition} \label{Prop:QAP_TSP}
The MISDP~\eqref{QAP-BSDP} is equivalent to~\eqref{QAP-integer}. 
\end{proposition}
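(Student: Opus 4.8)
The plan is to show the two directions of equivalence between the MISDP~\eqref{QAP-BSDP} and the integer program~\eqref{QAP-integer}, by establishing a correspondence between feasible points with matching objective values. First I would take $X \in \Pi_n$ feasible for~\eqref{QAP-integer} and construct a feasible point of~\eqref{QAP-BSDP}: set $R := XB$, $Y := XBX^\top$ and $Z := XB^2X^\top = RR^\top$ (using $X^\top X = \bold{I}_n$ since $X$ is a permutation matrix). Then the matrix in~\eqref{QAP-BSDP} equals $\begin{psmallmatrix} \bold{I}_n \\ X \\ R \end{psmallmatrix}\begin{psmallmatrix} \bold{I}_n \\ X \\ R \end{psmallmatrix}^\top \succeq \bold{0}$, which can be checked block by block: the $(1,1)$ block is $\bold{I}_n$, the $(2,2)$ block is $XX^\top = \bold{I}_n$, the $(1,2)$ block is $X^\top$, the $(1,3)$ block is $R^\top$, the $(2,3)$ block is $XR^\top = XB^\top X^\top = XBX^\top = Y$, and the $(3,3)$ block is $RR^\top = Z$. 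The objective $\langle A, Y\rangle + \langle C, X\rangle = \tr(AXBX^\top) + \tr(CX^\top)$ matches~\eqref{QAP-integer}.

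Conversely, I would take $(X, R, Y, Z)$ feasible for~\eqref{QAP-BSDP} and show the objective equals the QAP objective at the permutation $X$. Since $X \in \Pi_n$, we have $XX^\top = \bold{I}_n$, and by~\eqref{QAPlmi} the Schur complement gives in particular that the $(1,1)$ block $\bold{I}_n - XX^\top = \bold{0}$ is PSD — consistent — but more importantly the leading principal block $\begin{psmallmatrix} \bold{I}_n - XX^\top & Y - XR^\top \\ (Y-XR^\top)^\top & Z - RR^\top\end{psmallmatrix} \succeq \bold{0}$. Since its top-left block $\bold{I}_n - XX^\top$ is the zero matrix, positive semidefiniteness forces the off-diagonal block $Y - XR^\top$ to be zero as well (a PSD matrix with a zero diagonal block has the corresponding off-diagonal block equal to zero). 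Combined with $R = XB$ and $X^\top X = \bold{I}_n$, this yields $Y = XR^\top = XB^\top X^\top = XBX^\top$ (using $B \in \mathcal{S}^n$). Hence $\langle A, Y\rangle = \tr(AXBX^\top)$ and the objective of~\eqref{QAP-BSDP} at this point equals $\tr(AXBX^\top) + \tr(CX^\top)$, i.e.~the QAP objective. Since $Z$ does not appear in the objective and can always be chosen (e.g.~$Z = RR^\top$) to make the point feasible, the two optimization problems attain the same optimal value and optimal permutations correspond.

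The main obstacle — though it is a mild one — is the argument that a zero diagonal block of a PSD matrix forces the adjacent off-diagonal block to vanish; this needs to be invoked cleanly to pin down $Y = XBX^\top$ from the semidefiniteness constraint together with $R = XB$. One must also be careful that the equivalence is claimed in the ``optimal value and optimizer'' sense rather than a literal bijection of feasible sets, since $Z$ is underdetermined by the objective (any $Z \succeq RR^\top$ that keeps the block matrix PSD works, given $\bold{I}_n - XX^\top = \bold{0}$ and $Y = XR^\top$). Everything else is a routine block-matrix computation exploiting $X^\top X = XX^\top = \bold{I}_n$ for permutation matrices, the symmetry of $A$ and $B$, and the Schur complement lemma as already stated in the derivation of~\eqref{QAPlmi}.
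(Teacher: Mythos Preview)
Your proposal is correct and follows essentially the same approach as the paper: construct $(R,Y,Z)$ from $X\in\Pi_n$ via $R=XB$, $Y=XR^\top$, $Z=RR^\top$ for one direction, and for the other use the Schur complement together with $XX^\top=\bold{I}_n$ to force $Y=XR^\top=XBX^\top$ from the zero top-left block. Your additional remark that $Z$ is underdetermined and that ``equivalence'' should be read at the level of optimal values/optimizers is a fair clarification that the paper leaves implicit.
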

\begin{proof}
Let $(X,Y,Z, R)$ be feasible for \eqref{QAP-BSDP}. 
Then $XX^\top= \bold{I}_n$ and
$\begin{psmallmatrix}
\bold{I}_n- XX^\top & Y -XR^\top \\
Y - RX^\top & Z - RR^\top 
\end{psmallmatrix} \succeq \mathbf{0}$
imply that~${Y =XR^\top}$. Thus, $Y=XB^\top X^\top = XBX^\top$, meaning that the two objectives coincide.

Conversely, let $X$ be feasible for \eqref{QAP-integer}. Define
$R:=XB$, $Y:=XR^\top$ and $Z:=RR^\top.$
It trivially follows that the constraints in \eqref{QAP-BSDP} are satisfied and that the two objective functions coincide.
\end{proof}
Many combinatorial optimization problems can be formulated as the QAP,  see e.g.,~\cite{Burkard}. We provide an example below.
\begin{example}[The traveling salesman problem]\label{example:tsp}
Given is a complete undirected graph~${K_n=(V,E)}$ with~${n:=|V|}$ vertices and a matrix $D=(d_{ij}) \in \mathcal{S}^n$, where~$d_{ij}$ is the cost of edge~$\{ i,j \}\in E$. 
The goal of the traveling salesman problem (TSP) is to find a Hamiltonian cycle of minimum cost in $K_n$. 

Let $B$ be the adjacency matrix of the  {\color{black}tour} on $n$ vertices, i.e., $B$ is a symmetric Toeplitz matrix whose first row is $[0~1~\bold{0}_{n-3}^\top~1]$. It is well-known, {\color{black} see e.g.,~\cite{deKlerkPasechnikSotirov}}, that~\eqref{QAP-integer} with this matrix~$B$ and~$A = D$  is a formulation of  the TSP. 
Therefore, a MISDP formulation of the TSP is the optimization problem~\eqref{QAP-BSDP} where the objective is replaced by $\frac{1}{2}\langle  D, Y \rangle$.

Another MISDP formulation of the TSP is given in~\Cref{Subsec:Association}, see also~\eqref{TSPAlgConnect}. The latter formulation is, to the best of our knowledge, the most compact formulation of the TSP. \hfill \qed
\end{example}

\subsection{MISDP formulations of the graph partition problem}
\label{section:GPP}
We present here various MISDP formulations of the graph partition problem (GPP). Several of the here derived formulations  cannot be obtained by using results from~\Cref{section:vector lifting} and \Cref{section:matrix lifting}.  

The GPP is the problem of partitioning the vertex
set of a graph into a fixed number of sets, say~$k$, of given sizes such that the sum of weights of edges joining different sets is optimized.
If all sets are of equal size, then the corresponding problem is known as the~$k$-equipartition problem ($k$-EP). The case of the GPP with $k = 2$ is known as the graph bisection problem (GBP). 
To formalize, let~$G=(V,E)$ be an undirected graph on $n :=|V|$ vertices and let  $W := (w_{ij}) \in \mathcal{S}^n$ denote a weight matrix with $w_{ij} = 0$ if $\{i,j\} \notin E$.  The graph partition problem aims to partition the vertices of $G$ into $k$  ($2\leq k \leq n-1$) disjoint sets $S_1$, \ldots, $S_k$ of specified sizes $m_1\geq \cdots \geq m_k \geq 1$, $\sum_{j=1}^k m_j=n$ such that the total weight of edges joining different sets $S_j$ is minimized.

For a given partition of $V$ into $k$ subsets,  let $P=(P_{ij}) \in \{0,1\}^{n\times k}$ be the partition matrix, where~${P_{ij}=1}$ if and only if $i\in S_j$ for $i\in[n]$ and $j\in [k]$.
The total weight of the partition equals:
\begin{align}\label{GBP:obj}
\frac{1}{2} \tr \left( W({\mathbf J}_n-PP^\top) \right)  =\frac{1}{2} \tr  (LPP^\top),
\end{align}
where $L:= \textrm{Diag}(W{\mathbf 1}_n) - W$ is the weighted Laplacian matrix of $G$.
The GPP can be formulated as the following quadratic matrix program:
\begin{align}\label{GPP01} &
\begin{aligned}
\min \quad &  \frac{1}{2}  \langle L,PP^\top \rangle \quad \textup{s.t.} \quad  P{\mathbf 1}_k = {\mathbf 1}_n,~ P^\top  {\mathbf 1}_n = {\mathbf m},~ P \in \{0,1 \}^{n\times k},
\end{aligned}
\intertext{where ${\mathbf m}=[m_1~\ldots~m_k]^\top$.
The formulation~\eqref{GPP01} is a special case of the quadratic matrix program~\eqref{QMP_2}. 
Therefore, applying \Cref{THM:QMP_2_Equivalence}, the  {GPP} can be  modeled as follows:}
    & \begin{aligned}
        \min \quad & \frac{1}{2}\left \langle L, X \right \rangle \\
\text{s.t.}\quad  & P \bold{1}_k =  \bold{1}_n,~ P^\top \bold{1}_n = {\mathbf m},~\diag(X) =  \bold{1}_n\\ 
        & \begin{pmatrix}
\bold{I}_k & {P}^\top \\
{P} & {X}
\end{pmatrix} \succeq \bold{0},~X \in \{0,1\}^{n \times n},~P \in \{0,1\}^{n \times k}. 
    \end{aligned} \label{BSDP_GPP} 
\intertext{{\color{black}The doubly nonnegative relaxation of \eqref{BSDP_GPP} is similar to the  relaxation for the $k$-partition problem from \cite{FAIRBROTHER201797}.}
For the $k$-EP and the GBP, we can derive simpler formulations by removing $P$ from the model. \endgraf
In the case of the $k$-EP,  the QMP~\eqref{GPP01} is a special case of~\eqref{QMP_1}, and therefore  $k$-EP can be modeled as follows:}
& \begin{aligned}
\min \quad &  \frac{1}{2} \langle L, X\rangle \\
\textrm{s.t.} \quad & \diag({X}) = {\mathbf 1}_n,~X\bold{1}_n= \frac{n}{k}\bold{1}_n    \\
& k{X} - \bold{J}_n \succeq \mathbf{0},~ X\in {\mathcal S}^n, ~X \in \{ 0, 1 \}^{n\times n}. 
\end{aligned} \label{GEP_SDP}
\end{align} 
This result follows from~\Cref{THM:QMP_1_Equivalence}. An alternative proof is provided below.
\begin{proposition}
Let $\bold{m}=\frac{n}{k}\bold{1}_k$.
Then, the QMP~\eqref{GPP01} for the $k$-EP is equivalent to the BSDP~\eqref{GEP_SDP}.
\end{proposition}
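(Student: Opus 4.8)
The plan is to establish the equivalence by exhibiting mutually inverse correspondences between feasible points of the two programs, while checking that objective values are preserved. Since \Cref{THM:QMP_1_Equivalence} already gives this, the point of the ``alternative proof'' is to argue directly, using the matrix-theoretic characterization of rank-$r$ PSD $\{0,1\}$--matrices from \Cref{Prop:binaryProperties} (part $(iv)$) rather than going through \eqref{ISDP_ML1}.

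First I would take $P$ feasible for \eqref{GPP01} with $\bold{m} = \frac{n}{k}\bold{1}_k$ and set $X := PP^\top$. Because $P$ is a partition matrix, each row of $P$ has exactly one $1$, so $\diag(X) = P\bold{1}_k = \bold{1}_n$ and $X \in \{0,1\}^{n \times n}$. The column-sum condition $P^\top \bold{1}_n = \frac{n}{k}\bold{1}_k$ gives $X\bold{1}_n = PP^\top\bold{1}_n = P(\frac{n}{k}\bold{1}_k) = \frac{n}{k}\bold{1}_n$. For the semidefinite constraint, note that $P^\top P = \Diag(P^\top\bold{1}_n) = \frac{n}{k}\bold{I}_k$, so by \Cref{Prop:binaryProperties} (applying $(ii)\Rightarrow(i)$, since $QXQ^\top = \bold{J}_{n/k} \oplus \cdots \oplus \bold{J}_{n/k}$ with $k$ blocks) the matrix $X$ is PSD with $\rank(X) = k$ and ones on the diagonal; part $(iv)$ of \Cref{Prop:binaryProperties} then yields $kX - \bold{J}_n \succeq \mathbf{0}$. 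Finally $\frac{1}{2}\langle L, X\rangle = \frac{1}{2}\langle L, PP^\top\rangle$, so the objectives agree.

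Conversely, let $X$ be feasible for \eqref{GEP_SDP}. From $\diag(X) = \bold{1}_n$, $X \in \{0,1\}^{n\times n}$, $X \succeq \mathbf{0}$ (which follows since $kX \succeq \bold{J}_n \succeq \mathbf{0}$, hence $X \succeq \mathbf{0}$), and $kX - \bold{J}_n \succeq \mathbf{0}$, part $(iv)$ of \Cref{Prop:binaryProperties} forces $\rank(X) \le k$, while the same part applied with $t = k-1$ (together with $k$ being the smallest $t$ with $tX - \bold{J}_n \succeq \mathbf{0}$ when $\rank(X)=k$) shows $\rank(X)$ is exactly the threshold value; I would instead argue cleanly via $(i)\Leftrightarrow(ii)$: by \Cref{Prop:binaryProperties}, $X$ with $\diag(X)=\bold{1}_n$, $X \succeq \mathbf{0}$, $\rank(X) = r$ is permutation-similar to $\bold{J}_{n_1}\oplus\cdots\oplus\bold{J}_{n_r}$. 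The constraint $X\bold{1}_n = \frac{n}{k}\bold{1}_n$ says every row sum of $X$ equals $\frac{n}{k}$, which forces each block to have size exactly $\frac{n}{k}$, hence $r = k$ and all $n_i = \frac{n}{k}$. Reading off the block structure, define $P \in \{0,1\}^{n\times k}$ by letting column $j$ be the indicator of the $j$th block; then $P$ is a partition matrix with $P^\top\bold{1}_n = \frac{n}{k}\bold{1}_k = \bold{m}$, $X = PP^\top$, and the objectives coincide.

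The main obstacle is the bookkeeping around the rank: one must be careful that $kX - \bold{J}_n \succeq \mathbf{0}$ alone only gives $\rank(X) \le k$ (via \eqref{Eq:SetDdiscreteLMI} / \Cref{Cor:binPSDrank_upper} applied to the ones-diagonal case), and it is the extra equality constraint $X\bold{1}_n = \frac{n}{k}\bold{1}_n$ that rules out blocks of unequal size and thereby pins the rank to exactly $k$ with all blocks of size $n/k$. I would make sure to invoke $(i)\Leftrightarrow(ii)$ of \Cref{Prop:binaryProperties} for this step rather than trying to juggle the ``if and only if $t \ge r$'' phrasing of part $(iv)$ directly, since the block-diagonal form makes the equal-size argument immediate. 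Everything else is routine verification.
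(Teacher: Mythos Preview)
Your proposal is correct and follows essentially the same architecture as the paper's proof: in both directions the key is the block-diagonal characterization of PSD $\{0,1\}$--matrices with ones on the diagonal (\Cref{Prop:binaryProperties}\,$(ii)$), and in the converse direction you both use the row-sum constraint $X\bold{1}_n = \tfrac{n}{k}\bold{1}_n$ to force all blocks to size $n/k$, hence $r=k$.

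The one genuine difference is in how you verify $kX - \bold{J}_n \succeq \mathbf{0}$ in the forward direction. You go through \Cref{Prop:binaryProperties}\,$(iv)$ after establishing $\rank(X)=k$; the paper instead writes down the explicit sum-of-squares identity
\[
kX - \bold{J}_n \;=\; k\sum_{i=1}^k p_ip_i^\top - \Bigl(\sum_{i=1}^k p_i\Bigr)\Bigl(\sum_{i=1}^k p_i\Bigr)^\top \;=\; \sum_{i<j}(p_i-p_j)(p_i-p_j)^\top \;\succeq\; \mathbf{0},
\]
where $p_1,\dots,p_k$ are the columns of $P$. Your route is perfectly valid and arguably more in the spirit of the ``alternative proof via \Cref{Prop:binaryProperties}'' that the paper advertises; the paper's identity is more self-contained and does not appeal back to the proposition. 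Either way the content is the same. Your hesitation about juggling part~$(iv)$ in the converse direction is well-placed: the paper, like your final version, reads off the block structure from $(i)\Leftrightarrow(ii)$ and lets the row-sum constraint do the work.
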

\begin{proof}
Let $P$ be feasible for \eqref{GPP01} where  $\bold{m}=\frac{n}{k}\bold{1}_k$. We define 
$X := PP^\top$. The first and second constraint in~\eqref{GEP_SDP}, as well as $X\in \{ 0, 1 \}^{n\times n}$ follow by direct verification. 
Let ${p_i}$ be the $i$th column of $P$ for  $i\in[k]$, then
$$
kX-\bold{J}_n= k PP^\top - \bold{1}_n\bold{1}_n^\top 
= k \sum_{i=1}^k p_i p_i^\top - \left (  \sum_{i=1}^k p_i \right) \left (  \sum_{i=1}^k p_i \right) ^\top 
=\sum_{i<j} (p_i-p_j)({p_i}- {p_j})^\top \succeq \mathbf{0}.
$$
Conversely, let $X$ be feasible for \eqref{GEP_SDP}. Then, it follows from~\Cref{Thm:binPSD} and \Cref{Prop:binaryProperties}  that  there exist~${x_i \in \{0,1\}^n}$, $i\in [r]$, $k\geq r$ such that  $X= \sum_{i=1}^r x_ix_i^\top$ where $\sum_{i=1}^r x_i=\bold{1}_n$.
Since the constraint~${X\bold{1}_n= \frac{n}{k}\bold{1}_n}$ is invariant under permutation of rows and columns of~$X$, we have that the sum of the elements in each row and column of the block matrix~$\bold{J}_{n_1} \oplus \cdots \oplus \bold{J}_{n_r}$  equals~$n/k$. From this it follows that~$r=k$ and~$\bold{1}_n^\top x_i = n/k$ for $i\in [k]$.
It is easy to verify that~${P:=[x_1 ~\ldots~ x_k ]\in \{0,1\}^{n\times k}}$
is feasible for~\eqref{GPP01}. Since the two objectives coincide, the result follows.
\end{proof}
Next result shows that the MISDP~\eqref{BSDP_GPP} also simplifies for the GBP. It has to be noted, however, that the GBP is not a special case of~\eqref{QMP_1}.
\begin{proposition}
Let  $\bold{m}=[m_1~n-m_1]^\top$, $1\leq m_1 \leq n/2$.
Then, the QMP~\eqref{GPP01} for the GBP is equivalent to the following BSDP:
\begin{align} \label{GBP_SDP}
\begin{aligned}
\min \quad &  
\frac{1}{2} \langle L, X \rangle \\
\textup{s.t.} \quad & \diag(X) = {\mathbf 1}_n,~ \langle \bold{J}_n,  X \rangle = m_1^2 + (n-m_1)^2   \\
& 2X - \bold{J}_n \succeq \mathbf{0}, ~ X\in {\mathcal S}^n, ~  X\in \{ 0, 1 \}^{n\times n}. 
\end{aligned}
\end{align}
\end{proposition}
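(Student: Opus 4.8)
The plan is to mimic the proof of the preceding proposition for the $k$-EP, specializing $k = 2$ and replacing the equipartition constraint $X\mathbf{1}_n = \frac{n}{k}\mathbf{1}_n$ by the single scalar constraint $\langle \mathbf{J}_n, X\rangle = m_1^2 + (n-m_1)^2$. First I would take a feasible $P$ for \eqref{GPP01} with $\mathbf{m} = [m_1~~n-m_1]^\top$, set $X := PP^\top$, and verify the easy direction: $\diag(X) = \mathbf{1}_n$ follows from $P\mathbf{1}_2 = \mathbf{1}_n$ (as in Theorem~\ref{THM:QMP_1_Equivalence}); $X \in \{0,1\}^{n\times n}$ since $P$ is a partition matrix; $2X - \mathbf{J}_n = \sum_{i<j}(p_i - p_j)(p_i - p_j)^\top = (p_1 - p_2)(p_1 - p_2)^\top \succeq \mathbf{0}$ exactly as in the $k$-EP argument with $k = 2$; and $\langle \mathbf{J}_n, X\rangle = \mathbf{1}_n^\top PP^\top \mathbf{1}_n = \|P^\top\mathbf{1}_n\|^2 = \|\mathbf{m}\|^2 = m_1^2 + (n-m_1)^2$. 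The objectives coincide by \eqref{GBP:obj}.

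For the converse, let $X$ be feasible for \eqref{GBP_SDP}. Since $X \in \{0,1\}^{n\times n}$, $\diag(X) = \mathbf{1}_n$, $X \succeq \mathbf{0}$ (which follows from $2X - \mathbf{J}_n \succeq \mathbf{0}$ together with $\mathbf{J}_n \succeq \mathbf{0}$), Proposition~\ref{Prop:binaryProperties} applies: there is a permutation matrix $Q$ with $QXQ^\top = \mathbf{J}_{n_1} \oplus \cdots \oplus \mathbf{J}_{n_r}$, equivalently $X = \sum_{i=1}^r x_i x_i^\top$ with the $x_i \in \{0,1\}^n$ having disjoint supports and $\sum_i x_i = \mathbf{1}_n$. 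The key point is to use the statement $(iv)$ of Proposition~\ref{Prop:binaryProperties}: $2X - \mathbf{J}_n \succeq \mathbf{0}$ forces $r \leq 2$. Combined with $r \geq 1$, either $r = 1$ (so $X = \mathbf{J}_n$, giving $\langle \mathbf{J}_n, X\rangle = n^2$) or $r = 2$ (with $n = n_1 + n_2$, $n_1 \geq n_2 \geq 1$, and $\langle \mathbf{J}_n, X\rangle = n_1^2 + n_2^2$).

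The main obstacle — really the only nontrivial point — is to rule out $r = 1$ and to pin down $\{n_1, n_2\} = \{m_1, n - m_1\}$ using the cardinality constraint. Here I would argue: the function $f(t) = t^2 + (n-t)^2$ is strictly convex on $[1, n-1]$ and symmetric about $t = n/2$, so on the integer points it is strictly decreasing on $\{1, \dots, \lfloor n/2\rfloor\}$; hence $f(n_1) = f(m_1)$ with $1 \le m_1 \le n/2$ and (in the $r=2$ case) $n_2 = n - n_1 \ge 1$, $n_1 \ge n_2$ forces $n_1 = \max(m_1, n-m_1)$, i.e. $\{n_1, n_2\} = \{m_1, n-m_1\}$. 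For $r = 1$ we would need $n^2 = m_1^2 + (n-m_1)^2$, impossible since $m_1 \ge 1$ and $n - m_1 \ge m_1 \ge 1$ give $m_1^2 + (n-m_1)^2 < n^2$ (one must check $2m_1(n - m_1) > 0$, which holds). So $r = 2$ necessarily, and setting $P := [x_1~~x_2]$ (reordered so $\mathbf{1}_n^\top x_1 = m_1$) yields a feasible point of \eqref{GPP01} with matching objective. A small edge case to mention: if $m_1 = n/2$ then $r = 2$ with $n_1 = n_2 = n/2$, and the argument still goes through (this is just the $2$-EP). This completes both inclusions and hence the equivalence.
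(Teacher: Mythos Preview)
Your proof is correct and follows essentially the same approach as the paper. The only cosmetic difference is that the paper invokes Theorem~\ref{Thm:binPSD} together with Corollary~\ref{Cor:binPSDrank_upper} to get the rank-$2$ decomposition $X = x_1x_1^\top + x_2x_2^\top$ with $x_1+x_2=\mathbf 1_n$, whereas you appeal to part~$(iv)$ of Proposition~\ref{Prop:binaryProperties}; since $\diag(X)=\mathbf 1_n$ these are equivalent here, and your treatment of the rank-one case and the determination of $\{n_1,n_2\}$ from $\langle \mathbf J_n,X\rangle$ simply spells out in detail what the paper asserts in one line.
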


\begin{proof}
Let ${P}$ be feasible for \eqref{GPP01}. We define 
$X := PP^\top.$ The first and second constraint in \eqref{GBP_SDP} follow by direct verification. 
Let ${p_i}$ be the $i$th column of $P$ for  $i\in[2]$, then
$$
2X-\bold{J}_n=2 PP^\top - \bold{1}_n\bold{1}_n^\top 
= 2 \sum_{i=1}^2 p_i p_i^\top - \left (  \sum_{i=1}^2 p_i \right) \left (  \sum_{i=1}^2 p_i \right) ^\top
= (p_1-p_2)(p_1- p_2)^\top \succeq \mathbf{0}.
$$

Conversely, let $X$ be feasible for \eqref{GBP_SDP}. Then, it follows from~\Cref{Thm:binPSD} and~\Cref{Cor:binPSDrank_upper} that there exist~${x_1, x_2\in \{0,1\}^n}$ such that 
$X= x_1x_1^\top +x_2{x_2}^\top$ where $x_1+x_2={\mathbf 1}_n$. Note that $X$ cannot have rank one or zero for $1\leq m_1< n$.
From $\langle \bold{J}_n,{X} \rangle = m_1^2 + (n-m_1)^2$, it follows that~${\mathbf 1}_n^\top x_1=m_1$ or
 ${\mathbf 1}_n^\top x_1 =n-m_1$.  Without loss of generality, we assume that~${{\mathbf 1}^\top x_1=m_1}$. Clearly,~$P :=[x_1~x_2]$ is feasible for \eqref{GPP01}. Moreover, the two objective functions coincide. 
\end{proof}

In the remainder of this section, we derive yet another alternative MISDP formulation of the GPP, different from~\eqref{BSDP_GPP}. 
For that purpose we notice that the GPP can also be formulated as a QMP of the following form:
\begin{align} \tag{$QMP_3$} \label{QMP3}
    \begin{aligned}
\min  \quad  & \tr(P^\top Q_0 P) + \tr(P C_0 P^\top) + 2\tr(B_0^\top P)+d_0  \nonumber \\
\mbox{s.t.} \quad &  \tr(P^\top Q_i P) + \tr(P C_i P^\top) + 2\tr(B_i^\top P)+d_i \leq 0 \quad \forall i\in[m]\\
& P\in  \mathbb{R}^{n \times k},   \nonumber
\end{aligned}
\end{align} 
where $Q_i \in \mathcal{S}^n$,  $C_i\in \mathcal{S}^k$, $B_i\in \mathbb{R}^{n\times k}$, $d_i \in \mathbb{R}$ for $i=0,1,\ldots, m$. Note that \eqref{QMP_2} is a special case of~\eqref{QMP3}.
Examples of problems that are of this form are quadratic problems with orthogonality constraints, see e.g.,~\cite{AnstreicherWolkowicz}.
The GPP can be formulated as follows, see e.g.,~\cite{10.2307/23011949}:
\begin{align}\label{GPP02}
\begin{aligned}
\min \quad & \frac{1}{2} \langle  L, P P^\top \rangle    \\
\textrm{s.t.} \quad &  P^\top  \bold{1}_n = {\mathbf m},~ P^\top P = \Diag({\mathbf m})  \\
& \diag(P P^\top) = \bold{1}_n,~ P \geq \mathbf{0},  ~P\in  \mathbb{R}^{n \times k}.
\end{aligned}
\end{align}
To reformulate \eqref{GPP02} as a MISDP we introduce matrices $X_1 \in \mathcal{S}^n$ and $X_2\in \mathcal{S}^k$ such that~${X_1=PP^\top}$ and~${X_2= P^\top P}$ and relax these matrix equalities to the linear matrix inequalities (LMIs)~${X_1 - PP^\top \succeq \bold{0}}$ and~${X_2 - P^\top P \succeq \mathbf{0}}$, respectively. These can be rewritten as
$$
\begin{pmatrix}
\mathbf{I}_k & P^\top \\
P & X_1
\end{pmatrix} \succeq \bold{0} \quad \text{and} \quad
\begin{pmatrix}
\mathbf{I}_n & P \\
P^\top & X_2
\end{pmatrix} \succeq \mathbf{0}.
$$
After introducing the constraints  $\diag(X_1)= \bold{1}_n $ and $X_2 = \Diag({\mathbf m})$, we obtain the following MISDP:
\begin{align} \label{GPP03}
\begin{aligned} 
\min \quad&  \frac{1}{2} \langle {L},  X_1 \rangle  \\
\textrm{s.t.} \quad &  P \bold{1}_k = \bold{1}_n,~\diag(X_1)= \bold{1}_n,~
X_2 = \Diag({\mathbf m})  \\
& \begin{pmatrix}
\mathbf{I}_k & P^\top \\
P & X_1
\end{pmatrix} \succeq \mathbf{0},~
 \begin{pmatrix}
\mathbf{I}_n & P \\
P^\top & X_2
\end{pmatrix} \succeq \mathbf{0},~X_1 \in {\mathcal S}^n,~X_2 \in \mathcal{S}^k, ~P\in \{0,1\}^{n\times k}.
\end{aligned}
\end{align}
We prove below that \eqref{GPP03} is an exact formulation of the GPP. 
\begin{proposition}
The MISDP~\eqref{GPP03} is an exact formulation of the GPP.
\end{proposition}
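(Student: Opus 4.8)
The plan is to establish equivalence in the same two-direction style used throughout this section. First I would take a feasible partition matrix $P$ for the QMP formulation~\eqref{GPP02} and set $X_1 := PP^\top$ and $X_2 := P^\top P$. Then I would verify directly that $(P, X_1, X_2)$ satisfies every constraint of~\eqref{GPP03}: the equality $P\bold{1}_k = \bold{1}_n$ holds because $P$ is a partition matrix; $\diag(X_1) = \bold{1}_n$ follows from $\diag(PP^\top) = \bold{1}_n$; $X_2 = \Diag(\mathbf{m})$ is exactly the orthogonality constraint $P^\top P = \Diag(\mathbf{m})$; and the two linear matrix inequalities hold since $\begin{psmallmatrix} \mathbf{I}_k & P^\top \\ P & PP^\top \end{psmallmatrix} = \begin{psmallmatrix} \mathbf{I}_k \\ P \end{psmallmatrix}\begin{psmallmatrix} \mathbf{I}_k \\ P \end{psmallmatrix}^\top \succeq \mathbf{0}$, and symmetrically for the other block. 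The objective values coincide by construction.

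The substantive direction is the converse: given $(P, X_1, X_2)$ feasible for~\eqref{GPP03}, I must recover a valid partition matrix. The key observation is that the Schur complement lemma applied to the second LMI gives $X_2 - P^\top P \succeq \mathbf{0}$, hence $\diag(P^\top P) \leq \diag(X_2) = \mathbf{m}$, so $\mathbf{1}_n^\top P\mathbf{e}_j = \sum_i P_{ij}^2 = \sum_i P_{ij} \le m_j$ for each class $j$ (using $P_{ij}\in\{0,1\}$); summing over $j$ and using $P\bold{1}_k = \bold{1}_n$ forces $\sum_j \mathbf{1}_n^\top P\mathbf{e}_j = n = \sum_j m_j$, so in fact each column sum equals $m_j$, i.e. $P^\top\bold{1}_n = \mathbf{m}$. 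Combined with $P\bold{1}_k = \bold{1}_n$, this makes $P$ a genuine partition matrix with the prescribed set sizes, so $P$ is feasible for~\eqref{GPP02}. It then remains to show the objectives agree, i.e. $\langle L, X_1\rangle = \langle L, PP^\top\rangle$. For this I would use the first LMI, which via the Schur complement gives $X_1 - PP^\top \succeq \mathbf{0}$; since $\diag(X_1) = \bold{1}_n = \diag(PP^\top)$, the matrix $X_1 - PP^\top$ is positive semidefinite with zero diagonal and hence is the zero matrix, so $X_1 = PP^\top$ and the objectives coincide exactly.

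The main obstacle — really the only delicate point — is pinning down that $X_1 = PP^\top$ rather than merely $X_1 \succeq PP^\top$; the argument above resolves it cleanly using the zero-diagonal PSD fact, but one should be careful that $\diag(PP^\top) = \bold{1}_n$ genuinely holds, which it does because $P$ is a $\{0,1\}$ partition matrix so each row of $P$ has exactly one nonzero entry. A secondary subtlety is that the binary constraint $P \in \{0,1\}^{n\times k}$ in~\eqref{GPP03} is essential to derive $\sum_i P_{ij}^2 = \sum_i P_{ij}$; without it the column-sum bound would not follow from the diagonal of $X_2$. One should also note that $X_2$ need not itself be recovered as $P^\top P$ in general — it is simply fixed to $\Diag(\mathbf{m})$, and $P^\top P = \Diag(\mathbf{m})$ is then forced by the equal column sums together with the partition structure (distinct columns of a partition matrix have disjoint supports, so $P^\top P$ is automatically diagonal). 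I would close by remarking that the mapping between feasible solutions is the natural one and that the objective functions match under it, hence~\eqref{GPP03} is an exact reformulation of the GPP.
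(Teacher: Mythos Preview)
Your proof is correct and follows essentially the same strategy as the paper's: both directions hinge on the Schur complements of the two LMIs together with the fact that a PSD matrix with zero diagonal must vanish. Two minor remarks: in the forward direction you tacitly assume that a feasible $P$ for \eqref{GPP02} is already a binary partition matrix (so that $P\bold{1}_k=\bold{1}_n$ and $P\in\{0,1\}^{n\times k}$), whereas the paper actually derives this from the constraints $P\geq\mathbf{0}$, $\diag(PP^\top)=\bold{1}_n$, $P^\top\bold{1}_n=\mathbf{m}$ and $P^\top P=\Diag(\mathbf{m})$; on the other hand, your explicit argument that $X_1=PP^\top$ via the zero-diagonal of $X_1-PP^\top$ (so that the objectives genuinely coincide) is a step the paper's proof leaves implicit.
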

\begin{proof}
We prove the result by showing the equivalence between  \eqref{GPP02}  and \eqref{GPP03}.

Let $P \in \mathbb{R}^{n \times k}$ be feasible for \eqref{GPP02}. 
Then, it follows from $\diag(P P^\top) = \bold{1}_n$ that~$(PP^\top)_{ii}= \sum_{j=1}^k P_{ij}^2=1$ for $i\in [n]$.  From this and $P \geq \mathbf{0}$, we obtain $0\leq P_{ij} \leq 1$ for all~$i\in [n]$, $j\in [k]$.
From~$P^\top  \bold{1}_n = {\mathbf m}$ it follows that $ \sum_{i,j}P_{ij}=n $ and  from $P^\top P = \Diag({\mathbf m})$  that~$\tr(P^\top P)=n$, and thus~$ \sum_{i,j} P^2_{ij}=n$.
Therefore,~$P_{ij}\in \{0,1\}$ for all~$i\in [n]$, $j\in [k]$. The equality $\diag(PP^\top ) = \bold{1}_n$ then implies that $P\bold{1}_k=\bold{1}_n$.
It follows from the discussion prior to the proposition that $X_1:=P P^\top$ and~$X_2 := P^\top P$ are feasible for~\eqref{GPP03}.

Conversely, let $X_1$, $X_2$ and $P$ be feasible for  \eqref{GPP03}.
From  $P\in \{0,1\}^{n\times k}$ and $P\bold{1}_k=\bold{1}_n$ it follows that 
$\diag(PP^\top)= \bold{1}_n$. 
From $X_2 - P^\top P \succeq \mathbf{0} $ and
$\bold{1}_k^\top(X_2 - P^\top P )\bold{1}_k=0$ it follows that~${(X_2 - P^\top P )\bold{1}_k=\mathbf{0}}$ and thus $P^\top \bold{1}_n = {\mathbf m}.$
Moreover, we have $(P^\top P)_{ii}= \sum_{j=1}^n P_{ji}^2=  \sum_{j=1}^n P_{ji} =m_i$ for $i\in [k]$, implying that $\diag(P^\top P) = {\mathbf m}$.
{Finally, we have~$X_2 - P^\top P = \Diag({\mathbf m}) - P^\top P \succeq \mathbf{0}$, where it follows from above that the latter matrix has a diagonal of zeros.} Thus, we must have $P^\top P = \Diag({\mathbf m})$, which concludes the proof.
\end{proof}
The MISDP~\eqref{GPP03} has two LMIs and requires integrality constraints only on a matrix of size $n\times k$, while \eqref{BSDP_GPP} has only one LMI and asks for integrality on matrices of size~$n\times n$ and~$n\times k$.

\subsection{MISDP formulations via association schemes} \label{Subsec:Association}
Association schemes provide a unifying framework 
for the treatment of problems 
in several branches of mathematics, including algebraic graph theory and coding theory.
Delsarte~\cite{Delsarte} was the first to combine association schemes and linear programming for codes.  Schrijver~\cite{Schrijver2005NewCU}  refined the Delsarte bound by using semidefinite programming.
De Klerk et al.~\cite{KelrkOliveiraPasechnik} introduce a framework for deriving SDP relaxations of  optimization problems on graphs by using association schemes. 
 In \Cref{sect:AssocSchemeTSP} (resp.~\Cref{sect:AssocSchemeEP}) 
we derive  MISDP formulations of the TSP (resp.~$k$-EP) by exploiting   association schemes.

An association scheme of rank $r$ is a set $\{A_0,A_1,\ldots, A_r \} \subseteq \mathbb{R}^{n\times n}$ of $\{0,1\}$--matrices that satisfy the following properties:
\begin{enumerate}
\item[$(i)$] $A_0=\mathbf{I}_n$ and $\sum_{i=0}^r A_i=\mathbf{J}_n$;
\item[$(ii)$] $A^{\top}_i\in \{A_0, A_1,\ldots, A_r \} $ for $i \in \{0,1,\ldots,r\}$;
\item[$(iii)$] $A_iA_j=A_jA_i$ for $i,j\in \{0,1,\ldots,r\}$;
\item[$(iv)$] There exist {\color{black} scalars} $p^h_{ij}$ such that $A_iA_j=\sum_{h=0}^r p^h_{ij}A_h$ for $i,j\in\{0,1,\ldots,r\}$. 
\end{enumerate}
The numbers $p^h_{ij}$  are called the intersection numbers of the association scheme.  For more background on association schemes, we refer to~\cite{BrouwerHaemers,Godsil93}.

We restrict here to symmetric association schemes, i.e., we assume that all matrices $A_i$ are symmetric.
Matrices $A_i$ are linearly independent and they generate a commutative $(r+1)$-dimensional algebra of symmetric matrices.  This algebra is called the Bose-Mesner algebra of the association scheme. 
Since the matrices $A_i$ for all ${i=0,1,\ldots, r}$ commute, they can be  simultaneously diagonalized. Moreover, there exists a unique basis for the Bose-Mesner algebra consisting of minimal idempotents~$E_i$,~${i=0,1,\ldots ,r}$. These matrices satisfy $E_i E_j = \delta_{ij}E_i$ for all $i,j\in \{0,1,\ldots ,r\}$ and~${\sum_{i=0}^r E_i = {\bold I}_n}$,
where $\delta_{ij}$ is the Kronecker delta function. We assume without loss of generality~that~$E_0=\frac{1}{n}\bold{J}_n$. Bases~$\{A_0,A_1,\ldots, A_r\}$ and~$\{E_0,E_1,\ldots, E_r\}$  are related by: 
\begin{align}\label{basisAssocSch}
A_j = \sum_{i=0}^r P_{ij}E_i 
\quad \mbox{and} \quad
E_j = \frac{1}{n}\sum_{i=0}^r Q_{ij}A_i \quad j=0,1,\ldots, r.     \end{align}
Parameters~$P_{ij}$ and~$Q_{ij}$ for $i,j=0,1,\ldots,r$ are known as the eigenvalues of the association scheme and~${P=(P_{ij})_{i,j=0}^r}$ and~${Q=(Q_{ij})_{i,j=0}^r}$ are the eigenmatrices of
the association scheme.

Distance-regular graphs are closely related to association schemes.
Classes of distance-regular graphs include complete graphs,  complete bipartite graphs and cycle graphs.
Let us define the {distance-$i$ graph} $\Gamma_i$ as the graph with vertex set
$V$, in which two vertices~$x$ and $y$ are adjacent if and only if the length of the shortest path between $x$ and $y$ in~$G$ is~$i$.
The adjacency matrix $A_i$ of $\Gamma_i$
is called the {distance-$i$ matrix} of $G$ for $i = 0, 1, \ldots, d$.
It is known that the set of so generated  matrices~$\{A_0,A_1,\ldots, A_r \}$ form an association scheme.
It follows from the construction that this association scheme forms a (regular) partition of the edge set of a complete graph.

If $\{A_0,A_1,\ldots, A_r \}$ is an association scheme and  $P \in \Pi_n$, then  $\{P^{\top}A_0P, P^{\top}A_1 P,\ldots, P^{\top} A_r P \}$  is also an association scheme. Let us define the following polytope:
\begin{align}\label{Assoc:polytope}
{\mathcal H}(A_0,A_1,\ldots, A_r)  := \mbox{conv} \left \{ (P^{\top}A_0P, P^{\top}A_1 P,\ldots, P^{\top} A_r P)  \, :\, P\in \Pi_n\right \}.    
\end{align}
The vertices of ${\mathcal H}(A_0,A_1,\ldots, A_r)$ are of the form $\left \{ P^{\top}A_0P, P^{\top}A_1 P,\ldots, P^{\top} A_r P \right \}$ with~${P\in \Pi_n}$.
De Klerk et al.~\cite{KelrkOliveiraPasechnik}  show that many combinatorial optimization problems may be formulated as optimization problems over the polytope~\eqref{Assoc:polytope}. The key is to formulate a combinatorial optimization problem as the problem of finding an {\color{black}optimal} weight distance-regular subgraph of a weighted complete graph.
Since the distance matrices of a distance-regular graph form an association scheme, the  problem can then be reformulated as an optimization problem over the polytope~\eqref{Assoc:polytope} for an appropriate association scheme. 

For a given association scheme $\left \{A_0,A_1,\ldots, A_r \right \}$, De Klerk et al.~\cite{KelrkOliveiraPasechnik}  approximate~${\mathcal H}(A_0,A_1,\ldots, A_r)$ 
by the following larger set:
\begin{align}\label{AssocSch:Relax}
{\mathcal A} := 
\bigg \{
(X_0,\ldots,X_r)   : & \,  
X_0= \mathbf{I}_n, \sum\limits_{i=0}^r X_i, =\mathbf{J}_n,  \,
\sum\limits_{i=0}^r Q_{ij} X_i \succeq \mathbf{0}, j \in [r],   X_i \geq \mathbf{0}, X_i=X_i^{\top},  i\in [r]
 \bigg \},
\end{align}
where $Q_{ij}$ are the dual eigenvalues of the association scheme $\{A_0,A_1,\ldots, A_r \} $. They exploit the set~${\mathcal A}$ to derive SDP relaxations of various combinatorial optimization problems. 
The LMIs in ${\mathcal A}$ follow from the properties of the  basis of minimal idempotents, see~\eqref{basisAssocSch} and \cite{KelrkOliveiraPasechnik} for details.
In the sequel,  we show that by imposing integrality to {\color{black}only one} of the matrices in~\eqref{AssocSch:Relax}, one obtains an exact problem formulation.

\subsubsection{The traveling salesman problem}
\label{sect:AssocSchemeTSP} 
Let us reconsider the traveling salesman problem (TSP), see~Example~\ref{example:tsp}. In this section we assume that $n$  is {odd}. The results for $n$ even can be derived similarly. 

The adjacency matrix of a Hamiltonian cycle is  a symmetric circulant matrix that belongs to the association scheme of symmetric circulant matrices, known as the Lee scheme~\cite{LeeScheme}.
The dual eigenvalues of the Lee scheme are
$Q_{0j}=2$ for $j\in [r]$ and $Q_{i0}=1$ for $i=0,1,\ldots,r$,
where $r = \lfloor {n}/{2} \rfloor$.
After incorporating those dual eigenvalues in~\eqref{AssocSch:Relax}, the authors of~\cite{KelrkOliveiraPasechnik,deKlerkPasechnikSotirov} derive an SDP relaxation for the TSP. 
By imposing integrality on the variable $X_1$ in the SDP relaxation from~\cite{KelrkOliveiraPasechnik,deKlerkPasechnikSotirov} we obtain the following MISDP:
\begin{align}\label{TSPAssocMIP}
\begin{aligned}
\min &   \quad \frac{1}{2} \langle D, X_1 \rangle   \\
{\rm s.t.} & \quad \mathbf{I}_n  + \sum_{i=1}^r X_i = \mathbf{J}_n,~ \mathbf{I}_n  + \sum_{i=1}^r \cos \left (\frac{2ij\pi}{n} \right ) X_i  \succeq \mathbf{0} ~~ \forall j \in [r]  \\
& \quad X_1 = X_1^{\top} \in \{0,1\}^{n\times n},~ X_i\geq \mathbf{0}, ~ X_i \in {\mathcal S}^n ~~\forall i= 2,\ldots, r,
\end{aligned}
\end{align}
where $r=  \lfloor {n}/{2} \rfloor$.
Next, we show that the MISDP~\eqref{TSPAssocMIP} is an exact model of the TSP.
\begin{theorem}
The MISDP~\eqref{TSPAssocMIP} is an exact formulation of the TSP.
\end{theorem}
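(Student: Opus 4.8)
The plan is to establish a bijection between Hamiltonian cycles on $n$ vertices and feasible points of~\eqref{TSPAssocMIP}, matching objective values, exactly as in the general template of Section~\ref{Subsec:Association}. First I would argue the easy direction: given a Hamiltonian cycle $H$ in $K_n$, its adjacency matrix is a symmetric circulant matrix equal to $P^\top A_1 P$ for some $P \in \Pi_n$, where $A_1$ is the distance-$1$ matrix of the standard $n$-cycle; setting $X_i := P^\top A_i P$ for the distance-$i$ matrices $A_i$ of the Lee scheme gives a tuple in ${\mathcal H}(A_0,\ldots,A_r) \subseteq {\mathcal A}$, so in particular all the LMIs $\mathbf{I}_n + \sum_{i=1}^r \cos(2ij\pi/n) X_i \succeq \mathbf{0}$ hold (these are exactly the idempotent conditions $\sum_i Q_{ij} X_i \succeq \mathbf{0}$ written out with the Lee-scheme eigenvalues), $X_1$ is a $\{0,1\}$-matrix, and $\tfrac12\langle D, X_1\rangle$ equals the cost of $H$.

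The substantive direction is the converse: a feasible $(X_1,\ldots,X_r)$ of~\eqref{TSPAssocMIP} must have $X_1$ equal to $P^\top A_1 P$ for some permutation $P$, i.e.\ $X_1$ is the adjacency matrix of a Hamiltonian cycle. Here is where integrality of $X_1$ does the work. From $X_0 = \mathbf{I}_n$ and $\sum_{i=0}^r X_i = \mathbf{J}_n$ with all $X_i \geq \mathbf{0}$ and $\{0,1\}$-valued $X_1$, the matrix $X_1$ is a symmetric $\{0,1\}$-matrix with zero diagonal, hence the adjacency matrix of a simple graph $G_1$. I would then use the $j=1$ LMI, $\mathbf{I}_n + \sum_{i=1}^r \cos(2i\pi/n) X_i \succeq \mathbf{0}$, together with the full family of LMIs: these force the $X_i$ to lie in the Bose–Mesner algebra spanned by the $A_i$, so that $G_1$ is a $2$-regular graph (degree read off from the row sums forced by the eigenvalue relations) whose eigenvalues are among $\{2\cos(2ij\pi/n)\}$. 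A $2$-regular graph on $n$ vertices is a disjoint union of cycles; the spectral constraint coming from the smallest idempotent (or equivalently from counting, since $n$ is odd and the Lee scheme has $r = \lfloor n/2\rfloor$ classes) rules out a union of several shorter cycles and forces a single $n$-cycle. Once $G_1$ is an $n$-cycle, relabeling vertices gives $X_1 = P^\top A_1 P$, and then $X_i = P^\top A_i P$ for all $i$ is forced because the distance-$i$ matrices of the cycle are polynomials in $A_1$ and the algebra relations in~${\mathcal A}$ pin down the remaining $X_i$. Objective values match as before.

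I expect the main obstacle to be the spectral/combinatorial step showing that the $2$-regular graph $G_1$ is \emph{connected}, i.e.\ a single Hamiltonian cycle rather than a union of smaller cycles. The cleanest route is: the LMIs say every $X_i$ is a nonnegative combination of the idempotents $E_0,\ldots,E_r$ of the Lee scheme, forcing $(X_0,\ldots,X_r)$ to be a $\{0,1\}$-association scheme of rank $r = \lfloor n/2 \rfloor$ on $n$ vertices; but the unique such scheme in which $A_0 = \mathbf{I}_n$, $A_1$ is $2$-regular, and there are exactly $\lfloor n/2\rfloor$ classes is the cyclic (Lee) scheme itself — a union of $t$ disjoint $m$-cycles with $tm = n$, $t\geq 2$ would have strictly fewer than $\lfloor n/2\rfloor$ distinct distance classes. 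Alternatively one can invoke De Klerk et al.~\cite{KelrkOliveiraPasechnik, deKlerkPasechnikSotirov}, where it is shown that the integer points of the SDP relaxation~\eqref{AssocSch:Relax} for the Lee scheme are precisely the vertices of ${\mathcal H}(A_0,\ldots,A_r)$; combined with integrality of $X_1$ this immediately yields that $(X_1,\ldots,X_r)$ is a permuted Lee scheme, hence $X_1$ is a Hamiltonian cycle, and the theorem follows.
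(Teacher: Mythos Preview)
Your forward direction is fine and matches the paper in spirit. The converse direction, however, has a real gap. In~\eqref{TSPAssocMIP} only $X_1$ is required to be a $\{0,1\}$-matrix; the matrices $X_2,\ldots,X_r$ are merely nonnegative and symmetric. Your argument that ``the LMIs say every $X_i$ is a nonnegative combination of the idempotents $E_0,\ldots,E_r$ of the Lee scheme, forcing $(X_0,\ldots,X_r)$ to be a $\{0,1\}$-association scheme of rank $r$'' is not justified: the constraints $\sum_i Q_{ij}X_i \succeq \mathbf{0}$ say that certain linear combinations of the $X_i$ are PSD, not that each $X_i$ lies in the Bose--Mesner algebra of the Lee scheme, and certainly not that the $X_i$ for $i\geq 2$ are $\{0,1\}$-valued. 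Consequently the counting argument (``fewer than $\lfloor n/2\rfloor$ distance classes'') does not apply, because the remaining $X_i$ need not be distance matrices of $G_1$ at all. The alternative you offer --- invoking a purported result that the integer points of $\mathcal{A}$ are the vertices of $\mathcal{H}(A_0,\ldots,A_r)$ --- would again require integrality of all $X_i$, which is not assumed.

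The paper sidesteps this entirely by routing through the Cvetkovi\'c et al.\ ISDP~\eqref{TSPAlgConnect}, which is already known to be an exact TSP formulation. For the converse direction it shows (citing~\cite[Theorem~4.1]{deKlerkPasechnikSotirov}) that the single Cvetkovi\'c LMI
\[
2\mathbf{I}_n - X_1 + 2\bigl(1-\cos(2\pi/n)\bigr)(\mathbf{J}_n-\mathbf{I}_n)\succeq \mathbf{0}
\]
can be obtained as a nonnegative linear combination of the $r$ LMIs in~\eqref{TSPAssocMIP} together with $\mathbf{I}_n+\sum_i X_i\succeq \mathbf{0}$, and uses~\cite[Theorem~7.3]{KelrkOliveiraPasechnik} to obtain $X_1\mathbf{1}_n=2\mathbf{1}_n$. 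Thus feasibility for~\eqref{TSPAssocMIP} implies feasibility for~\eqref{TSPAlgConnect}, and connectivity of the $2$-regular graph comes for free from the algebraic-connectivity characterization built into~\eqref{TSPAlgConnect}. This is precisely the step your direct spectral/combinatorial argument is missing.
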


\begin{proof}
To prove the result we use the following  ISDP formulation of the TSP by Cvetkovi\'c~et~al.~\cite{Cvetkovic}:
\begin{align}\label{TSPAlgConnect}
\begin{aligned}
\min \quad &   \quad \frac{1}{2} \langle D, X \rangle   \\
{\rm s.t.} & \quad 2 \mathbf{I}_n - X + 2 \left ( 1- \cos \left (\frac{2\pi}{n} \right ) \right ) ( \mathbf{J}_n - \mathbf{I}_n )  \succeq \mathbf{0}  &  \\
& \quad X \mathbf{1}_n = 2 \mathbf{1}_n,~\diag(X)=\mathbf{0},~ X \in {\mathcal S}^n, \,\,  X\in \{0,1\}^{n\times n}.
\end{aligned}
\end{align}
The ISDP~\eqref{TSPAlgConnect} is derived by exploiting the algebraic connectivity of a cycle on $n$ vertices.

Let $X$ be feasible for~\eqref{TSPAlgConnect}. Then, $X$ is the  adjacency matrix of a Hamiltonian cycle, hence $X$ is a symmetric circulant matrix.
Define $X_0:= \mathbf{I}_n$, $X_1:=X$ and $X_2 := X_{1}^2-2 \mathbf{I}_n$, and the recurrence relation~$X_1 X_i=X_{i+1} +X_{i-1}$  for $i=2,\ldots, r-1.$ Then, the set $ \{ X_0, X_1,\ldots, X_r \}$ where $r=\lfloor n/2\rfloor$ forms an association scheme of (permuted) symmetric circulant matrices. {\color{black}Note that  $X_i$ $i=0,1,\ldots,r$ are the distance matrices of the Hamiltonian cycle whose adjacency matrix is $X_1$.} The LMIs in~\eqref{TSPAssocMIP}   follow from the fact that the corresponding basis of minimal idempotents $\{E_0,\ldots, E_r\}$ satisfies
$E_j = \frac{1}{n}\sum_{i=0}^r Q_{ij}X_i \succeq \mathbf{0}$,
for~$j=0,1,\ldots, r$. See also~\cite{deKlerkPasechnikSotirov} for a direct proof of the validity of these LMIs that exploits the fact that the  $X_i$'s may be simultaneously diagonalized.

Conversely, let $X_i$ for $i\in [r]$ be feasible for~\eqref{TSPAssocMIP}. Define  $X:=X_1$. Now, similarly as in~\cite[Theorem 4.1]{deKlerkPasechnikSotirov} 
one can show that  $2 \mathbf{I}_n +  \left ( 1- 2\cos \left (2\pi/{n} \right ) \right )X_1 +  \left ( 2- 2\cos \left (2\pi/{n} \right ) \right ) \sum_{i=2}^r  X_i  \succeq \mathbf{0}$ may be obtained 
as a nonnegative linear combination of $r$ LMIs from~\eqref{TSPAssocMIP} and $\mathbf{I}_n +  \sum_{i=1}^r X_i  \succeq \mathbf{0}$. 
From~\cite[Theorem 7.3]{KelrkOliveiraPasechnik}, it follows that $X \mathbf{1}_n = 2 \mathbf{1}_n.$ Since the two objectives clearly coincide, the result follows.
\end{proof}

\subsubsection{The \boldmath $k$-equipartition problem}
\label{sect:AssocSchemeEP}
In \Cref{section:GPP} we derived various MISDP formulations of the GPP, including the special case of the $k$-EP.
We now derive an alternative formulation of the $k$-EP by exploiting an appropriate association scheme.

Let $n,k,m\in {\mathbb Z}_+$, and 
$D$ be a nonnegative symmetric matrix of order $n$, where $n=mk$. The $k$-EP can be equivalently formulated as finding a complete regular $k$-partite subgraph on $n$ vertices in $K_{mk}$ of minimum weight. The complete regular $k$-partite graph is strongly regular. The dual eigenvalues of the corresponding association  scheme are given in the following matrix:
\begin{align}\label{kEPdualeig}
Q =  \begin{pmatrix}
        1 & (m-1)k & k-1 \\
        1 & 0 &  -1\\
        1 & -k & k-1
    \end{pmatrix}.
\end{align}
De Klerk et al.~\cite{KelrkOliveiraPasechnik} combine \eqref{AssocSch:Relax} and
\eqref{kEPdualeig} to derive an association scheme-based SDP relaxation of the~$k$-EP.
By imposing integrality  on the matrix variable $X_2$ in the SDP relaxation from~\cite{KelrkOliveiraPasechnik} we obtain:
\begin{align}\label{EPAssoc}
\begin{aligned}
\min & \quad \langle D, X_1\rangle \\
{\rm s.t.}  & \quad (m-1)\mathbf{I}_n - X_2 \succeq \mathbf{0},~ (k-1)\mathbf{I}_n -X_1 + (k-1)X_2 \succeq \mathbf{0}\\
& \quad \mathbf{I}_n + X_1+X_2 = \mathbf{J}_n,~  X_1, X_2 \in {\mathcal S}^n, \,\, X_1 \geq \mathbf{0}, \,\, X_2 \in \{0,1\}^{n \times n}. 
\end{aligned}
\end{align}
In the sequel we show that the MISDP~\eqref{EPAssoc} is an exact formulation of the $k$-EP. Note that in~\eqref{EPAssoc} it is sufficient to require that 
$X_2 \in \{0,1\}^{n \times n}$, since the integrality of $X_1$ follows from ${\mathbf{I}_n + X_1+X_2 = \mathbf{J}_n}$.
\begin{proposition}\label{assoc:kEP}
The BSDP~\eqref{GEP_SDP} is equivalent to the MISDP~\eqref{EPAssoc}.
\end{proposition}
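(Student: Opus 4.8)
The plan is to exhibit an explicit affine bijection between the feasible regions of \eqref{GEP_SDP} and \eqref{EPAssoc} that preserves objective values: to a feasible $X$ of \eqref{GEP_SDP} I associate the pair $(X_1,X_2):=(\mathbf{J}_n-X,\ X-\mathbf{I}_n)$, and conversely to a feasible pair $(X_1,X_2)$ of \eqref{EPAssoc} I associate $X:=\mathbf{I}_n+X_2=\mathbf{J}_n-X_1$. Most constraints transfer mechanically. The identity $\mathbf{I}_n+X_1+X_2=\mathbf{J}_n$ is immediate; substituting $X_1=\mathbf{J}_n-X$ and $X_2=X-\mathbf{I}_n$ turns the second linear matrix inequality of \eqref{EPAssoc}, $(k-1)\mathbf{I}_n-X_1+(k-1)X_2\succeq\mathbf{0}$, into exactly $kX-\mathbf{J}_n\succeq\mathbf{0}$, and turns the first, $(m-1)\mathbf{I}_n-X_2\succeq\mathbf{0}$, into $m\mathbf{I}_n-X\succeq\mathbf{0}$. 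On the discrete side, a symmetric $X\in\{0,1\}^{n\times n}$ with $\diag(X)=\mathbf{1}_n$ corresponds to a symmetric $X_2\in\{0,1\}^{n\times n}$ with $\diag(X_2)=\mathbf{0}$: in the direction from \eqref{EPAssoc}, the zero diagonal of $X_2$ is forced because $(X_1)_{ii}+(X_2)_{ii}=0$ with $X_1\geq\mathbf{0}$ and $X_2\in\{0,1\}^{n\times n}$, and in the direction from \eqref{GEP_SDP}, the inequality $X_1=\mathbf{J}_n-X\geq\mathbf{0}$ holds automatically since $X$ is a $\{0,1\}$--matrix.

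The only substantive point is that \eqref{EPAssoc} carries no explicit equipartition constraint, so in that direction I must recover $X\mathbf{1}_n=\frac{n}{k}\mathbf{1}_n$ from the translated inequalities. From $kX-\mathbf{J}_n\succeq\mathbf{0}$ and $\mathbf{J}_n\succeq\mathbf{0}$ one gets $X\succeq\mathbf{0}$; since $X$ is then a symmetric positive semidefinite $\{0,1\}$--matrix with $\diag(X)=\mathbf{1}_n$, Proposition~\ref{Prop:binaryProperties} provides a permutation matrix $Q$ with $QXQ^\top=\mathbf{J}_{n_1}\oplus\cdots\oplus\mathbf{J}_{n_r}$, $r=\rank(X)$, whose nonzero eigenvalues are $n_1,\dots,n_r$, and part~$(iv)$ of that proposition forces $r\leq k$. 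Reading $m\mathbf{I}_n-X\succeq\mathbf{0}$ as $\lambda_{\max}(X)\leq m$ gives $n_i\leq m$ for all $i$. Hence $mk=n=\sum_{i=1}^{r}n_i\leq rm\leq km$, which must hold with equality throughout, so $r=k$ and $n_1=\cdots=n_k=m$; thus $X$ is the characteristic matrix of an equipartition of $[n]$ into $k$ classes of size $m$, which yields $X\mathbf{1}_n=m\mathbf{1}_n$ and completes feasibility for \eqref{GEP_SDP}. In the reverse direction the only nonroutine check is $m\mathbf{I}_n-X\succeq\mathbf{0}$; as in the proof of the preceding proposition a feasible $X$ of \eqref{GEP_SDP} equals $PP^\top$ for an equipartition matrix $P$, so $P^\top P=m\mathbf{I}_k$ and $\lambda_{\max}(X)=m$. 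Finally, the objectives coincide under the natural identification of the cost data: with $X_1=\mathbf{J}_n-X$ and $L=\Diag(W\mathbf{1}_n)-W$ we have $\frac{1}{2}\langle L,X\rangle=\frac{1}{2}\langle W,\mathbf{J}_n-X\rangle=\langle\frac{1}{2}W,X_1\rangle$, so the objective of \eqref{EPAssoc} (with $D=\frac{1}{2}W$) agrees with that of \eqref{GEP_SDP} on corresponding points.

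I expect the counting step in the direction from \eqref{EPAssoc} to \eqref{GEP_SDP} to be the crux: everything hinges on reading the two linear matrix inequalities of \eqref{EPAssoc} spectrally — one as the rank bound $\rank(X)\leq k$, the other as the spectral-radius bound $\lambda_{\max}(X)\leq m$ for the $\{0,1\}$--valued block-diagonal matrix $X$ — and then squeezing the chain $mk=\sum_i n_i\leq rm\leq km$ down to equality. The remaining verifications are routine bookkeeping.
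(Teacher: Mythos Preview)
Your proof is correct and uses the same affine bijection $X\leftrightarrow(X_1,X_2)=(\mathbf{J}_n-X,\,X-\mathbf{I}_n)$ as the paper, but your argument for the key step differs. In the direction from \eqref{EPAssoc} to \eqref{GEP_SDP}, the paper recovers $X\mathbf{1}_n=m\mathbf{1}_n$ by invoking an external result (\cite[Theorem~7.3]{KelrkOliveiraPasechnik}) that guarantees $X_2\mathbf{1}_n=A_2\mathbf{1}_n$ for any point in the association-scheme relaxation. You instead give a self-contained squeezing argument: from Proposition~\ref{Prop:binaryProperties} you read off the block structure $QXQ^\top=\mathbf{J}_{n_1}\oplus\cdots\oplus\mathbf{J}_{n_r}$, interpret the two LMIs of \eqref{EPAssoc} spectrally as $r\leq k$ and $\max_i n_i\leq m$, and force $r=k$, $n_i\equiv m$ via $mk=\sum_i n_i\leq rm\leq km$. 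This is arguably cleaner, since it stays entirely within the toolkit developed in Section~\ref{Subsect:theory01} and does not appeal to results outside the paper. In the reverse direction you verify $m\mathbf{I}_n-X\succeq\mathbf{0}$ via $\lambda_{\max}(PP^\top)=\lambda_{\max}(P^\top P)=m$, whereas the paper uses the row-sum bound $\rho(X)\leq\|X\|_\infty=m$; both are immediate. Your explicit identification $D=\tfrac{1}{2}W$ to match the objectives is something the paper leaves implicit.
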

\begin{proof}
Let $X_1$, $X_2$  be feasible for \eqref{EPAssoc}. 
We define matrix $X:=\mathbf{I}_n +X_2$, and show that it is feasible for~\eqref{GEP_SDP}. It follows trivially that  $\diag(X)=\mathbf{1}_n$. Further,   
$X\mathbf{1}_n = (\mathbf{I}_n+X_2)\mathbf{1}_n =(\mathbf{I}_n +A_2)\mathbf{1}_n = m\mathbf{1}_n $ follows from the fact that $X_2 \mathbf{1}_n = A_2 \mathbf{1}_n$, see~\cite[Theorem 7.3]{KelrkOliveiraPasechnik}, where $A_2 := \Diag(\mathbf{1}_k) \otimes (\mathbf{J}_m - \mathbf{I}_m)$. 
To verify the PSD constraint we proceed as follows:
$$  (k-1)\mathbf{I}_n - X_1 + (k-1)X_2 = k(\mathbf{I}_n + X_2) - (\mathbf{I}_n +X_1+X_2)=  kX - \mathbf{J}_n     \succeq \mathbf{0}. $$

Conversely, let $X$ be feasible for  \eqref{GEP_SDP}.
Define $X_1:= \mathbf{J}_n-X$ and $X_2 :=X-\mathbf{I}_n$, from where it follows that $X_1, X_2 \in \{0,1\}^{n\times n}$. 
Then, the first constraint in \eqref{EPAssoc} is trivially satisfied and
$$
  (k-1)\mathbf{I}_n -X_1 + (k-1)X_2 =
  (k-1)\mathbf{I}_n -\mathbf{J}_n + X + (k-1)(X-\mathbf{I}_n) = kX- \mathbf{J}_n \succeq \mathbf{0}.
$$
For the second LMI, we have
$(m-1)\mathbf{I}_n - X_2 = m\mathbf{I}_n -X  \succeq \mathbf{0}, $
where we use the fact that the spectral radius $\rho(X) \leq \| X\|_{\infty} = m$.
The two objective functions clearly coincide.
\end{proof}
{\color{black}Note that the MISDP~\eqref{EPAssoc} has two LMIs and the BSDP~\eqref{GEP_SDP} only one.}
Moreover, observe that
one may replace in~\eqref{EPAssoc} the constraint $ (m-1)\mathbf{I}_n - X_2 \succeq \mathbf{0}$  by $X_2\mathbf{1}_n = (m-1)\mathbf{1}_n$ and obtain a MISDP for the $k$-EP with only one PSD constraint. This follows directly from the proof of Proposition~\ref{assoc:kEP}.

\subsection{MISDP formulations beyond binarity}
Almost all problem formulations that have been discussed before involve matrix variables with entries in~$\{0, 1\}$. In this section we consider several problems that allow for semidefinite formulations where (some of) the variables are integers, but not necessarily restricted to~$\{0,1\}$. 

\begin{example}[The integer matrix completion problem] \label{Ex:IMCP}
A well-known problem in data analysis is the problem of low-rank matrix completion. Suppose a partially observed data matrix is given, i.e., let~$\Omega \subseteq [n] \times [m]$ denote the set of observed entries and let $D \in \mathbb{R}^{n \times m}$ denote a given data matrix that has its support on~$\Omega$. The goal of the low-rank matrix completion problem is to find a minimum rank matrix $X \in \mathbb{R}^{n \times m}$ such that $X$ coincides with $D$ on the set $\Omega$, e.g., see~\cite{Netflix}. 

Since minimizing $\rank(X)$ leads to a nonconvex and therefore hard problem, a related but tractable alternative is given by minimizing the nuclear norm of $X$, i.e.,~${||X||_* := \sum_{i = 1}^n  \sigma_i(X)}$, where $\sigma_i$ denotes the $i$th  singular value of $X$. Hence, we consider the following program:
\begin{align*}
    \min_{X \in \mathbb{R}^{n \times m}}  & \quad||X||_* \quad \text{s.t.} \quad  X_{ij} = D_{ij} \quad \text{for all } (i,j) \in \Omega. 
\intertext{As shown by Recht et al.~\cite{RechtEtAl},  the optimization problem above is equivalent to the following SDP:} 
    \min \quad & \quad \langle \bold{I}_n, Z_1 \rangle + \langle \bold{I}_m, Z_2 \rangle  \\
    \text{s.t.} \quad & \quad   \begin{pmatrix}
        Z_1 & X \\
        X^\top & Z_2
    \end{pmatrix} \succeq \bold{0}, ~X_{ij} = D_{ij} \quad \text{for all } (i,j) \in \Omega.
\end{align*} 
\textcolor{black}{Since $X$ can be thought of as a multiplicative model underlying the data observed in $D$, see~\cite{Netflix}, a possible generalization would be to require the entries in $X$ to be integer.}
This yields the following integer matrix completion problem: 
\begin{align*} 
    \min \quad & \langle \bold{I}_n, Z_1 \rangle + \langle \bold{I}_m, Z_2 \rangle  \\
    \text{s.t.} \quad & X_{ij} = D_{ij} ~~ \text{for all } (i,j) \in \Omega,\quad X_{ij} \in S ~~ \text{for all } (i,j) \notin \Omega \\
    & \begin{pmatrix}
        Z_1 & X \\
        X^\top & Z_2
    \end{pmatrix} \succeq \bold{0}. 
\end{align*}
where $S \subseteq \mathbb{Z}$ is a discrete set. \hfill \qed
\end{example}

\begin{example}[The sparse integer least squares problem] \label{Ex:SILS}
In the integer least squares problem we are given a matrix $M \in \mathbb{R}^{n \times k}$ and a column $b \in \mathbb{R}^n$ and we seek the closest point to $b$ in the lattice spanned by the columns of $M$. Del Pia and Zhou~\cite{PiaZhou} consider the related sparse integer least squares (SILS) problem, which can be formulated as
\begin{align}
    & \begin{aligned}
        \min \quad & \frac{1}{n} || Mx - b ||_2^2 \quad \text{s.t.} \quad  x \in \{0,\pm 1\}^k,~||x||_0 \leq K. 
    \end{aligned} \label{Prob:SILS}
\intertext{The SILS problem has applications in, among other, multiuser detection and sensor networks, see~\cite{PiaZhou} and the references therein. Now, consider the following ternary SDP:}
& \begin{aligned}
        \min \quad & \frac{1}{n} \left\langle \begin{pmatrix}
            1 & x^\top \\ x & X
        \end{pmatrix}, \begin{pmatrix}
            b^\top b  & - b^\top M \\ - M^\top b & M^\top M
        \end{pmatrix} \right \rangle \\
        \text{s.t.} \quad & \tr(X) \leq K,~~ \diag(X) = y_1 + y_2,~~x = y_1 - y_2 \\
        & \begin{pmatrix}
            1 & x^\top \\ x & X
        \end{pmatrix} \succeq \bold{0}, ~~ \begin{pmatrix}
            1 & x^\top \\ x & X
        \end{pmatrix} \in \{0, \pm 1\}^{(k+1) \times (k+1)},~~ y_1, y_2 \in \mathbb{R}^n_+.
    \end{aligned} \label{SILS_TSDP}
\end{align}
It is easy to verify that if $x$ is a solution to~\eqref{Prob:SILS}, then $x$, $X = xx^\top$, $y_1 = \max(x,\bold{0})$ and~$y_2 = \max(-x,\bold{0})$ is feasible for~\eqref{SILS_TSDP} with the same objective value. 
Conversely, if $(x,X,y_1,y_2)$ is feasible for~\eqref{SILS_TSDP}, it follows from~Proposition~\ref{prop:pm1b} that $x = y_1 - y_2$ is a solution to~\eqref{Prob:SILS} with the same objective value.
\end{example}

\section{Conclusions} \label{Sect:Conclusion}
In this paper we {\color{black}showed} that the class of mixed-integer semidefinite programs embodies a rich structure, allowing for compact formulations of many well-known discrete optimization problems. 
Due to the recent progress in computational methods for solving MISDPs~\cite{GallyEtAl,GravitySDP, HojnyPfetsch,KobayashiTakano,MatterPfetsch, deMeijerSotirovCG}, these formulations can be exploited to obtain alternative methods for solving the problems to optimality. 

As most problems are naturally encoded using binary or ternary variables, we {\color{black} started} our research with a study on the general theory related to PSD $\{0,1\}$--, $\{\pm 1\}$-- and $\{0, \pm 1\}$--matrices. Section~\ref{Sect:TheoryPSD} provides a comprehensive overview on this matter, including known and new results. In particular, we {\color{black}presented} a combinatorial, polyhedral, set-completely positive and integer hull description of the set of PSD $\{0,1\}$--matrices bounded by a certain rank, see~Section~\ref{Subsect:theory01}. Several of these results are extended to matrices having entries in $\{\pm 1\}$ and $\{0,\pm 1\}$. 

Based on these matrix results, in particular Theorem~\ref{Thm:binPSD}--\ref{Thm:binPSDrank} and Corollary~\ref{Cor:ExactRank}, we {\color{black}derived} a generic approach to model binary quadratic problems as BSDPs. We {\color{black}derived} a BSDP for the class of binary quadratically constrained quadratic programs, see~\eqref{BQP01_bin}, and for two types of binary quadratic matrix programs, see~\eqref{ISDP_ML1} and~\eqref{ISDP_ML2}. These results are widely applicable to a large number of discrete optimization problems, see also the examples in Section~\ref{Sect:BQPformulations}. 

We moreover we {\color{black}considered} problem-specific MISDP formulations that are derived in a different way than through this generic approach. We {\color{black}provided} compact MISDP formulations of the {\color{black}QAP}, see~\eqref{QAP-BSDP}, and various variants of the {\color{black}GPP}, see~\eqref{BSDP_GPP}, \eqref{GEP_SDP} and~\eqref{GBP_SDP}. We {\color{black}derived} several MISDP formulations of discrete optimization problems that can be modeled using association schemes, see~Section~\ref{Subsec:Association}. We also {\color{black}considered} problems that have discrete but non-binary variables, e.g., the integer matrix completion problem and the sparse integer least squares problem, see Example~\ref{Ex:IMCP} and~\ref{Ex:SILS}, respectively.

Given the wide range of discrete optimization problems for which we derived new formulations based on mixed-integer semidefinite programming, we expect more problems to allow for such representations. It is also interesting to study the behaviour of MISDP solvers on the presented formulations to see whether this leads to competitive solution approaches for the considered problems. 

\vspace{-0.3cm}

{ \normalsize

\bibliography{QCCP_QTSP} 
}
\end{document}